\documentclass{imsart}
\usepackage{comment}
\usepackage{mathrsfs}
\RequirePackage[OT1]{fontenc}
\RequirePackage{amsthm,amsmath,amsfonts,color,mathrsfs,graphicx}
\RequirePackage{natbib}
\RequirePackage[colorlinks,citecolor=blue,urlcolor=blue]{hyperref}

\bibliographystyle{imsart-nameyear}
\startlocaldefs
\numberwithin{equation}{section}
\theoremstyle{plain}
\endlocaldefs
\theoremstyle{definition}

\newcommand{\etal}{{\em et al.}}

\usepackage{float,paralist,enumerate}
\usepackage{enumitem, hyperref}

\makeatletter
\def\namedlabel#1#2{\begingroup
    #2%
    \def\@currentlabel{#2}%
    \phantomsection\label{#1}\endgroup
}
\makeatother

\newtheorem{thm}{Theorem}[section]
 \newtheorem{cor}{Corollary}[section]
 \newtheorem{lem}{Lemma}[section]
 \newtheorem{prop}{Proposition}[section]
 \theoremstyle{definition}
 \newtheorem{defn}{Definition}[section]
 \newtheorem{rmk}{Remark}[section]

\DeclareMathOperator{\var}{Var}   \DeclareMathOperator{\cov}{Cov}
\DeclareMathOperator{\tr}{tr}

\renewcommand{\(}{\left(}
\renewcommand{\)}{\right)}
\newcommand{\lj}{\left|}
\newcommand{\rj}{\right|}
\newcommand{\De}{\Delta}
\newcommand{\la}{\lambda}

\newcommand{\de}{\delta}
\newcommand{\al}{\alpha}
\renewcommand{\th}{\theta}
\newcommand{\si}{\sigma}

\newcommand{\ga}{\gamma}
\newcommand{\pmu}{\pmb\mu}
\newcommand{\pTh}{\pmb\Theta}
\newcommand{\pLa}{\pmb\Lambda}
\newcommand{\ep}{\epsilon}
\newcommand{\pSi}{\pmb\Sigma}

\newcommand{\pvep}{\pmb\varepsilon}
\newcommand{\vep}{\varepsilon}
\newcommand{\pep}{\pmb\epsilon}

\def\ICV{\mbox{ICV}}

\def\RCV{\mbox{RCV}}
\def\PAV{\mbox{PAV}}
\def\$\cB_m${\mbox{$\cB_m$}}
\DeclareMathOperator*{\argmin}{arg\,min}

\newcommand{\A}{{\bf A}}
\newcommand{\B}{{\bf B}}

\newcommand{\I}{{\bf I}}
\newcommand{\e}{{\bf e}}
\newcommand{\R}{{\bf R}}
\newcommand{\X}{{\bf X}}
\newcommand{\Y}{{\bf Y}}
\newcommand{\Z}{{\bf Z}}
\newcommand{\W}{{\bf W}}
\newcommand{\V}{{\bf V}}
\renewcommand{\S}{{\bf S}}
\newcommand{\x}{{\bf x}}

\newcommand{\toop}{\stackrel{p}{\longrightarrow}}

\newcommand{\cD}{{\mathcal D}}
\newcommand{\cF}{{\mathcal F}}
\newcommand{\cL}{{\mathcal L}}
\newcommand{\cA}{{\mathcal A}}
\newcommand{\cB}{{\mathcal B}}
\newcommand{\bC}{{\mathbb C}}
\newcommand{\bR}{{\mathbb R}}
\newcommand{\sD}{{\mathscr{D}}}

\newcommand{\zz}[1]{\mathbb{#1}}

\def\tSi {{\widetilde{\Sigma}}}
\def\tpSi {{\widetilde{\pmb\Sigma}}}

\def\bpSi {{\breve{\pmb\Sigma}}}
\newcommand{\ol}{\overline}
\newcommand{\ul}{\underline}
\newcommand{\wt}{\widetilde}
\newcommand{\wh}{\widehat}
\def\eps{\varepsilon}

\def\q{\quad}

\newcommand{\ip}[1]{\langle #1 \rangle}

\definecolor{darkviolet}{rgb}{0.58, 0.0, 0.83}

\renewcommand{\theenumi}{\Alph{enumi}}

 \makeatletter
 \renewcommand{\p@enumii}{\theenumi.}
 \makeatother

\begin{document}
\begin{frontmatter}

\title{On the inference about the spectra of high-dimensional covariance matrix based on noisy observations}
\begin{Large}
--with applications to integrated covolatility matrix inference in the presence of microstructure noise
\end{Large}
\runtitle{Inferring spectra of HD Cov based on Noisy Observations}

\begin{aug}
\author{\fnms{Ningning} \snm{Xia}\ead[label=e1]{xia.ningning@mail.shufe.edu.cn}\thanksref{t1}}
\and
\author{\fnms{Xinghua} \snm{Zheng}\ead[label=e2]{xhzheng@ust.hk}\thanksref{t2}}

\thankstext{t1}{Research partially supported by NSFC 11501348 and  Shanghai Pujiang Program 15PJ1402300.}
\thankstext{t2}{Research partially supported by DAG (HKUST) and GRF 606811 of the HKSAR.}
\runauthor{N. Xia and X. Zheng}

\affiliation{Shanghai University of Finance and Economics, and Hong Kong University of Science and Technology}

\address{School of Statistics and Management, Shanghai\\
Key Laboratory of Financial Information Technology,\\
Shanghai University of Finance and Economics\\
777 Guo Ding Road, China, 200433\\
\printead{e1}}

\address{Department of information systems,\\
business statistics and operations management\\
Hong Kong University of Science and Technology\\
Clear Water Bay, Kowloon, Hong Kong\\
\printead{e2}}
\end{aug}

\begin{abstract}
In practice,  observations are often contaminated by noise, making the resulting sample covariance matrix to be an information-plus-noise-type covariance matrix.
Aiming to make inferences about the spectra of the underlying true covariance matrix under such a situation,  we establish an asymptotic relationship that describes how the limiting spectral distribution of (true) sample covariance matrices depends on that of information-plus-noise-type sample covariance matrices.
As an application, we consider the inference about the  spectra of integrated covolatility (ICV) matrices of high-dimensional diffusion processes based on high-frequency data with microstructure noise.  The (slightly modified) pre-averaging estimator is an
information-plus-noise-type covariance matrix, and the aforementioned result, together with a (generalized) connection between the spectral distribution of true sample covariance matrices and that of the population covariance matrix, enables us to propose a two-step procedure to estimate the spectral distribution of ICV for a class of diffusion processes. An alternative estimator is further proposed, which possesses two desirable properties: it eliminates the impact of microstructure noise, and its limiting spectral distribution depends only on that of the ICV through the standard Mar\v{c}enko-Pastur equation.
Numerical studies demonstrate that our proposed methods can be used to estimate the spectra of the underlying covariance matrix based on noisy observations.
\end{abstract}


\begin{keyword}[class=AMS]
 \kwd{Primary}\kwd{62H12}\kwd{secondary} \kwd{62G99 }  \kwd{tertiary} \kwd{60F15}
\end{keyword}

\begin{keyword}
\kwd{High-dimension}
\kwd{high-frequency}
\kwd{integrated covariance matrices}
\kwd{Mar\v{c}enko-Pastur equation}
\kwd{microstructure noise}
\end{keyword}
\end{frontmatter}

\section{Introduction and Main Results}\label{sec:intro}

\subsection{Motivation}\label{ssec:motivation}
Covariance structure is of fundamental importance in multivariate analysis and all kinds of applications. While in classical low-dimensional setting, a usually unknown covariance structure can be estimated by the sample covariance matrix, in the high-dimensional setting, it is now well understood that the sample covariance matrix is not a consistent estimator. What is even worse, in many applications the observations are contaminated, and below we explain one such setting that motivates this work. Similar situations certainly arise in many other settings.

Our motivating question arises in the context of estimating the so-called integrated covariance matrix of high-dimensional diffusion process, with applications towards the study of stock price processes. More specifically, suppose that we have $p$ stocks whose (efficient) log price processes are denoted by $(X_t^{j})$ for $j=1,\ldots,p$. Let $\X_t=(X_t^{1},\ldots,X_t^{p})^T$. A widely used model for $(\X_t)$ is
\begin{eqnarray}
d\X_t=\pmu_t\, dt + \pTh_t\, d\W_t, ~~~~~ t\in [0,1],
\label{diffusion}
\end{eqnarray}
where $\pmu_t=(\mu_t^{1},\ldots,\mu_t^{p})^T$ is a $p$-dimensional drift process,  $\pTh_t$ is a $p\times p$ matrix for any $t$, and is called the covolatility process, and $(\W_t)$ is a $p$-dimensional standard Brownian motion. The interval $[0,1]$ stands for the time period of interest, say, one day. The integrated covariance (ICV) matrix refers to
\[
\ICV:= \ \int_0^1 {\pmb\Theta}_t{\pmb\Theta}_t^T \, dt.
\]
The ICV matrix, in particular, its spectrum (i.e., its set of eigenvalues) plays an important role in financial applications such as factor analysis and risk management.

A classical estimator of the ICV matrix is the so-called realized covariance (RCV) matrix, which relies on the assumption that one could observe $(\X_t)$ at high frequency. More specifically, suppose that $(\X_t)$ could be observed at time points $t_i=i/n$ for $i=0,1,\ldots,n$. Then, the RCV matrix is defined as
\begin{eqnarray}
\RCV=\ \sum_{i=1}^n \De\X_i\(\De\X_i\)^T,
\label{RCV}
\end{eqnarray}
where
\[
\De\X_i= \( \begin{array}{c}
\De X_i^{1}\\
\vdots\\
\De X_i^{p} \end{array} \)
:= \( \begin{array}{c}
X_{t_i}^{1}-X_{t_{i-1}}^{1}\\
\vdots\\
X_{t_i}^{p}-X_{t_{i-1}}^{p} \end{array} \)
\]
stands for the vector of log returns over the period $[(i-1)/n, i/n]$.
Consistency as well as central limit theorems for RCV matrix under such a setting \emph{and} when the dimension $p$ is fixed is well unknown,  see, for example, \cite{AB98},
\cite{ABDL01}, \cite{BNS02}, \cite{jacodprotter98}, \cite{myklandzhang06},
among others.

However, in practice, the observed prices are always contaminated by the market microstructure noise. The market microstructure noise is induced by various frictions in the trading process such as the bid-ask spread, asymmetric information of traders, the discreteness of price, etc. Despite the small size,
market microstructure noise accumulates at high frequency and affects badly
the inferences about the efficient price processes.  In practice, as is pointed out in \cite{LPS15} where a careful comparison between various volatility estimators and the 5-min realized volatility is carried out, when the sampling frequency is higher than one observation per five minutes, the microstructure noise is usually no longer negligible, and the following additive model has been widely adopted in recent studies on volatility estimation:
\begin{equation}\label{eq:noise_additive}
\Y_{t_i}=\X_{t_i}+\pvep_i, ~~~ i=1,\cdots,n,
\end{equation}
where $\Y_{t_i}=(Y_t^{1},\ldots,Y_t^{p})^T$ denote the observations, $\pvep_i=(\eps_i^{1},\ldots,\eps_i^{p})^T$ denote the noise, which are \hbox{i.i.d.},
independent of $(\X_t)$, with $E(\pvep_i)=0$ and certain covariance matrix $\pSi_e$.

Observe that under \eqref{eq:noise_additive}, the observed log-returns $\Delta \Y_{t_i}:=\Y_{t_i} - \Y_{t_{i-1}}$ relates to the true log-returns $\Delta \X_{t_i}$ by the following equation
\begin{equation}\label{eq:signal_noise_additive}
\Delta \Y_{t_i}=\Delta \X_{t_i}+\Delta\pvep_i,  ~~~ i=1,\cdots,n,
\end{equation}
where, as usual, $\Delta \pvep_{i}:=\pvep_{i} - \pvep_{{i-1}}$. We are therefore in a noisy observation setting where the observations are contaminated by additive noise. Such a setting forms the basis of the current work.

Besides microstructure noise, there is another issue which is due to asynchronous trading. In practice, different stocks are traded at different times, consequently, the tick-by-tick data are not observed synchronously. There are several existing methods for synchronizing data, like the refresh times (\cite{BHLS11}) and previous tick method (\cite{Zhang11}). Compared with microstructure noise, asynchronicity is less an issue. For example, as is pointed out in \cite{Zhang11}, asynchronicity does not induce bias in the two-scales estimator,  and even the
asymptotic variance is the same  as if there is no asynchronicity. While we do not seek a rigorous proof to avoid the paper being unnecessarily lengthy, we believe our methods  to be introduced below work just as well for asynchronous data. The reason, roughly speaking, is as follows. Take the previous tick method for example. Here we choose a (usually equally spaced) grid of time points $0=t_0<t_1<\ldots<t_n=1$, and for each stock $j$, for each time $t_i$, let $\tau_i^j$ be the latest transaction time before $t_i$. One then acts as if one observes $Y_{\tau_i}^j$ at time $t_i$ for stock $j$. With the original additive model at time $\tau_i^j$:
\[
Y_{\tau_i}^j = X_{\tau_i}^j + \eps_{i}^j,
\]
we have at time $t_i$,
\[
Y_{t_i}^j:=Y_{\tau_i}^j = X_{t_i}^j +\left(( X_{\tau_i}^j - X_{t_i}^j)  + \eps_{i}^j\right).
\]
In other words, the asynchronicity induces an additional error $( X_{\tau_i}^j - X_{t_i}^j)$. The error is however diminishingly small as the sampling frequency $n\to\infty$ since $X_{\tau_i}^j - X_{t_i}^j=O_p(\sqrt{t_i - \tau_i^j}) = o_p(1)$. To sum up, even though  asynchronicity  induces an additional error, the error is of negligible order compared with the microstructure noise $(\eps_i^j)$. Henceforth we shall stick to the model \eqref{eq:signal_noise_additive}.

One striking feature in \eqref{eq:signal_noise_additive} that differs from most noisy observation settings is that as the observation frequency $n$ goes to infinity, the signal, namely, the true log-return $\Delta \X_{t_i}$ becomes diminishingly small, while the noise $\Delta\pvep_i$ remains of the same order of magnitude, and therefore the signal-to-noise ratio goes to 0. In the next section we will explain a method that fixes this issue. Our first main result, Theorem \ref{thm:LSD_signal_noise}, applies to a general setting where the signal and noise are of a same order of magnitude.

\subsection{Pre-averaging approach}\label{ssec:preaveraging}
The pre-averaging (PAV) approach is introduced in \cite{JLMPV09}, \cite{PV09} and \cite{CKP10} to deal with microstructure noise. Other approaches include the  two-scales estimator (\cite{ZMA05},\cite{Zhang11}),  realized kernel (\cite{BHLS08},\cite{BHLS11}) and  quasi-maximum likelihood method (\cite{Xiu10},\cite{AFX10}). We shall use a slight variant of the PAV approach in this work. First, choose a window length $k$. Then, group the intervals $[(i-1)/n,{i}/{n}]$, $i=1,\ldots,2k\cdot\lfloor{n}/(2k)\rfloor$ into
$m:=\lfloor{n}/(2k)\rfloor$ pairs of non-overlapping windows, each of width $(2k)/n$, where $\lfloor\cdot\rfloor$ denotes rounding down to the nearest integer.
Introduce the following notation for any process ${\bf V}=({\bf V}_t)_{t\geq 0}$,
\[
\Delta {\bf V}_i={\bf V}_{i/n}-{\bf V}_{(i-1)/n},
~~\overline{{\bf V}}_i=\frac{1}{k}\sum_{j=0}^{k-1}{\bf V}_{((i-1)k+j)/n},
~~{\rm and}~~
\Delta\overline{{\bf V}}_{2i}=\overline{{\bf V}}_{2i}-\overline{{\bf V}}_{2i-1}.
\]
With such notation, the observed return based on the pre-averaged price becomes
\begin{equation}\label{eq:rtn_pav}
\Delta\ol{\Y}_{2i}=\Delta\ol{\X}_{2i} + \Delta\ol{\pvep}_{2i}.
\end{equation}
One key observation is that if $k$ is chosen to be of order $\sqrt{n}$ (which is the order chosen in \cite{JLMPV09}, \cite{PV09} and \cite{CKP10}), then in \eqref{eq:rtn_pav} the ``signal'' $\De\ol\X_{2i}$ and ``noise'' $\De\ol\pvep_{2i}$ can be shown to be of the same order of magnitude.
Our version of PAV matrix is then just the sample covariance matrix based on these returns:
\begin{equation}\label{PAV}
\aligned
\PAV&:=\sum_{i=1}^m \left(\Delta\overline{{\bf Y}}_{2i}\right)\left(\Delta\overline{{\bf Y}}_{2i}\right)^T.
\endaligned
\end{equation}

\subsection{From signal to signal-plus-noise and back}\label{ssec:existing_work}
We first recall some concepts in multivariate analysis.  For any $p\times p$ symmetric matrix $\pSi$, suppose that its eigenvalues are
$\lambda_1,\ldots,\lambda_p$, then its \emph{empirical
spectral distribution} (ESD) is defined as
\[
   F^{\pSi}(x):=\frac{1}{p} \# \{j: \lambda_j \leq
   x\},\quad \mbox{for } x\in\zz{R}.
\]
The limit of ESD as $p\to\infty$, if exists, is referred to as the \emph{limiting
spectral distribution}, or LSD for short.

The matrix $\PAV$ can be viewed as the sample covariance matrix based on observations $\De\ol{\X}_{2i}+\De\ol{\pvep}_{2i}$, which model the situation of information vector $\De\ol{\X}_{2i}$ being contaminated by  additive noise $\De\ol{\pvep}_{2i}$.
\cite{DS2007a} consider such information-plus-noise-type sample covariance matrices as
\[
\S_n=\dfrac{1}{n}\(\A_n+\si\pvep_n\)\(\A_n+\si\pvep_n\)^T,
\]
where  $\pvep_n$ is independent of  $(\A_n)_{p\times n}$, and  consists of \hbox{i.i.d.} entries with zero mean and unit variance.
Write
$\cA_n:= \A_n\A_n^T/n$.
The authors show that if $F^{\cA_n}$ converges, then so does $F^{\S_n}$. They further show  how the LSD of~$\S_n$ depends on that of $\cA_n$ (see equation (1.1) therein).

In this article, we investigate the problem from a different angle. We shall show how the LSD of $\cA_n$ depends on that of $\S_n$.  Our motivation for seeking such a relationship is that, in practice,  we are usually  interested in making inferences about signals $\A_n$ based on noisy observations $\A_n+\si\pvep_n$. Therefore, a more practically relevant result is a relationship that describes how the LSD of $\cA_n$ depends on that of $\S_n$. Let us mention that inverting the aforementioned relationships is in general notoriously difficult. For example, the Mar\v{c}enko-Pastur equation, which is very similar to equation (1.1) in \cite{DS2007a} and describes how the LSD of the sample covariance matrix depends on that of the population covariance matrix, is long-established, but it was only a few years ago that researchers [\cite{ElKaroui08}; \cite{Mestre08};  \cite{BCY10} \hbox{etc.}] realized how the (unobservable) LSD of the population covariance matrix can be recovered based on the (observable) LSD of the sample covariance matrix. Our first result, Theorem \ref{thm:LSD_signal_noise} below,
provides an approach that allows one to derive the LSD of $\cA_n$ based on that of $\S_n$.

We first collect some notation that will be used throughout the article.  For any real  matrix ${\bf A}$, $\|{\bf A}\|=\sqrt{\lambda_{\textrm{max}}({\bf A}{\bf A}^T)}$ denotes its spectral norm, where ${\bf A}^T$ is the  transpose of  ${\bf A}$, and $\lambda_{\textrm{max}}$ denotes the largest eigenvalue.
For any nonnegative definite matrix $\B$, $\B^{1/2}$ denotes its square root matrix.
For any $z\in\mathbb{C}$, write $\Re(z)$ and $\Im(z)$ as its real and imaginary parts, respectively, and $\bar{z}$ as its complex conjugate. For any distribution $F$, $m_{F}(\cdot)$ denotes its Stieltjes transform defined as
\[
m_{F}(z)= \ \int \ \frac{1}{\lambda-z} \ dF(\lambda), ~~~ {\rm for}~ z\in\mathbb{C}^+{:=}\{z\in\mathbb{C}: \Im(z)>0\}.
\]
In particular, for any Hermitian matrix~$\pSi$ with eigenvalues $\lambda_1,\ldots,\lambda_p$, the Stieltjes transform of its ESD, denoted by $m_{\pSi}(\cdot):=m_{F^{\pSi}}(\cdot)$, is given by
\[
m_{\pSi}(z)= \frac{\tr((\pSi - z\I)^{-1})}{p}, ~~~ {\rm for}~ z\in\mathbb{C}^+,
\]
where $\I$ is the identity matrix.
For any vector $\x$, $|\x|$ stands for its Euclidean norm.
Finally, ``$\stackrel{d}{=}$'' stands for ``equal in distribution'', $\stackrel{\mathcal{D}}{\to}$ denotes weak convergence,
$\toop$ means convergence in probability, $Y_{n}=O_p (f (n))$ means that the sequence
$(|Y_{n}|/f (n))$ is tight, and $Y_{n}=o_p (f (n))$ means that $Y_{n}/f (n) \toop 0$.

We now present our first result about how the LSD of $\cA_n$ depends on that of $\S_n$.
\begin{thm}\label{thm:LSD_signal_noise}
Suppose that ${\bf S}_n=\dfrac{1}{n}({\bf A}_n+\sigma_n \pvep_n)({\bf A}_n+\sigma_n {\pmb \varepsilon}_n)^T$, where
\begin{compactenum}\setcounter{enumi}{1}
 \item[]
 \begin{compactenum}
  \item\label{asm:FA_conv} ${\bf A}_n$ is $p\times n$,  independent of $\pvep_n$, and with $\cA_n=(1/n)\A_n\A_n^T$, we have
      $F^{\cA_n}\stackrel{\mathcal{D}}{\to}F^{\cA}$, where $F^{\cA}$ is a  probability distribution with Stieltjes transform denoted by $m_{\cA}(\cdot)$;
  \item\label{asm:sigma_n_conv} $\sigma_n\geq 0$ with $\lim_{n\to\infty} \sigma_n=\sigma\in[0,\infty) $;
  \item\label{asm:eps} $\pvep_n=(\ep_{ij})$ is $p\times n$ with the entries $\ep_{ij}$ being \hbox{\hbox{i.i.d.}} and centered with unit variance; and
  \item\label{asm:yn_conv} $n=n(p)$ with $y_n=p/n\to y >0$ as $p\to\infty$.
\end{compactenum}
\end{compactenum}
Then, almost surely, the ESD of $\S_n$ converges in distribution to a probability distribution  $F$.
Moreover,  if $F$ is supported by a finite interval $[a,b]$ with $a>0$ and possibly has a point mass at 0,
then for all $z\in\bC^+$ such that the integral on the right hand side of \eqref{eqn:LSD_signal_to_noisy} below is well-defined,
$m_{\cA}(z)$ is determined by $F$ in that it uniquely solves the following equation
\begin{eqnarray}
~~~~~~~~
m_{\cA}(z) = \displaystyle\int
\dfrac{dF(\tau)}{\dfrac{\tau}{1-y\si^2m_{\cA}(z)}-z(1-y\si^2m_{\cA}(z))+\si^2(y-1)}
\label{eqn:LSD_signal_to_noisy}
\end{eqnarray}
in the set
\begin{equation}\label{dfn:mA_domain}
D_{\cA}:=
\{\xi\in\bC: ~
z(1-y\si^2 \xi)^2- \si^2(y-1)(1-y\si^2 \xi) \in\bC^+\}.
\end{equation}
\end{thm}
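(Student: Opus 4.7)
The plan is to first establish the almost-sure weak convergence of $F^{\S_n}$ by an appeal to the main result of \cite{DS2007a}, and then to derive \eqref{eqn:LSD_signal_to_noisy} by algebraically inverting the DS equation. Assumptions (\ref{asm:FA_conv})--(\ref{asm:yn_conv}) fit the DS framework: $F^{\cA_n}$ converges, the noise entries are i.i.d.\ centered with unit variance, $\si_n\to\si$, and $p/n\to y>0$ (with a routine perturbation argument absorbing the $n$-dependence of $\si_n$). This yields $F^{\S_n}\stackrel{\cD}{\to}F$ a.s., together with the DS identity
\[
m_F(z)\;=\;\int \frac{dF^{\cA}(\tau)}{\dfrac{\tau}{1+y\si^2 m_F(z)}-z(1+y\si^2 m_F(z))+\si^2(1-y)},\qquad z\in\bC^+.
\]

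To invert this, multiply numerator and denominator of the DS integrand by $(1+y\si^2 m_F(z))$, which recasts the identity as
\[
\frac{m_F(z)}{1+y\si^2 m_F(z)}\;=\;m_{\cA}\!\left(z(1+y\si^2 m_F(z))^2-\si^2(1-y)(1+y\si^2 m_F(z))\right).
\]
A short direct computation (with $\mu=m_F(z)$ and $\beta=1+y\si^2\mu$, one checks $\Im(\mu/\beta)=\Im\mu/|\beta|^2>0$) shows that the argument $W$ of $m_{\cA}$ on the right stays in $\bC^+$ whenever $z\in\bC^+$. Setting $\xi:=m_{\cA}(W)$ and using $\xi=m_F(z)/(1+y\si^2 m_F(z))$, one solves to get $m_F(z)=\xi/(1-y\si^2\xi)$ and $1+y\si^2 m_F(z)=1/(1-y\si^2\xi)$, which in turn yields
\[
z\;=\;W(1-y\si^2\xi)^2-\si^2(y-1)(1-y\si^2\xi).
\]
Renaming $W\mapsto z$ and substituting $m_F(u)=\int dF(\tau)/(\tau-u)$ reproduces \eqref{eqn:LSD_signal_to_noisy} verbatim, and shows that $\xi=m_{\cA}(z)$ is one solution lying in $D_{\cA}$, because the corresponding argument of $m_F$ is precisely the original $z\in\bC^+$.

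The delicate step is uniqueness of the solution within $D_{\cA}$. Fix $z\in\bC^+$ and write $w(\xi):=z(1-y\si^2\xi)^2-\si^2(y-1)(1-y\si^2\xi)$, so $D_{\cA}=\{\xi:\,w(\xi)\in\bC^+\}$, on which $m_F\circ w$ is analytic; here the compact-support hypothesis on $F$ is crucial, as it keeps $m_F$ away from its singularities. If $\xi_1,\xi_2\in D_{\cA}$ both solve \eqref{eqn:LSD_signal_to_noisy} with $w(\xi_1)=w(\xi_2)$, then the common value $\mu:=m_F(w(\xi_i))$ forces $\xi_i=\mu/(1+y\si^2\mu)$, hence $\xi_1=\xi_2$. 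The residual case $w(\xi_1)\ne w(\xi_2)$ is the main obstacle I foresee: here $w(\xi_1)$ and $w(\xi_2)$ are two distinct points in $\bC^+$, and ruling out this case requires exploiting the Herglotz nature of $m_F$ on $\bC^+$, presumably through an argument-principle argument applied on a bounded subdomain of $D_{\cA}$ or a monotonicity tracking of imaginary parts along the quadratic curve $\{w(\xi):\xi\in D_{\cA}\}$. Because $D_{\cA}$ is in general not convex, a direct fixed-point contraction is unavailable, and uniqueness has to be established via the global analytic structure of $m_F\circ w$ together with the positivity of $\Im m_F$ on $\bC^+$.
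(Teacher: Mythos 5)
Your proposal follows the same two-stage structure as the paper — cite Dozier--Silverstein for the almost-sure convergence of $F^{\S_n}$ and then algebraically invert the DS forward equation via the change of variables $\xi=m_\cA(W)$, $m_F=\xi/(1-y\si^2\xi)$. That much is fine and matches the paper. The problem is the uniqueness step: you explicitly leave open the case $w(\xi_1)\neq w(\xi_2)$, and this is a genuine gap, not a routine detail. The argument-principle or imaginary-part-monotonicity routes you gesture at would require controlling $m_F\circ w$ on a region that is hard to characterize (as you note, $D_\cA$ need not be convex), and neither is carried out.

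The paper closes the gap with an elementary argument that needs no global analysis and no case distinction on whether $w(\xi_1)=w(\xi_2)$. Fix $z\in\bC^+$ and suppose $m_1\neq m_2\in D_\cA$ both satisfy \eqref{eqn:LSD_signal_to_noisy}. For $j=1,2$ set $\ga_j=z(1-y\si^2 m_j)^2-\si^2(y-1)(1-y\si^2 m_j)$; membership in $D_\cA$ gives $\ga_j\in\bC^+$. Multiplying numerator and denominator in \eqref{eqn:LSD_signal_to_noisy} by $(1-y\si^2 m_j)$ yields $m_j=(1-y\si^2 m_j)\,m_F(\ga_j)$, hence $m_F(\ga_j)=m_j/(1-y\si^2 m_j)$ and $1+y\si^2 m_F(\ga_j)=1/(1-y\si^2 m_j)$. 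Inverting the definition of $\ga_j$ then gives $z=\ga_j\bigl(1+y\si^2 m_F(\ga_j)\bigr)^2+\si^2(y-1)\bigl(1+y\si^2 m_F(\ga_j)\bigr)$. Now use the already-established forward DS equation at the point $\ga_j\in\bC^+$; the denominator in that equation simplifies, via the last display, to $(x-z)/\bigl(1+y\si^2 m_F(\ga_j)\bigr)$, so that $m_F(\ga_j)=\bigl(1+y\si^2 m_F(\ga_j)\bigr)\,m_\cA(z)$ for both $j$. Since $u\mapsto u/(1+y\si^2 u)$ is an injective M\"obius map, $m_F(\ga_1)=m_F(\ga_2)$; since $u\mapsto u/(1-y\si^2 u)$ is also injective, $m_1=m_2$, a contradiction. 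In short, you run the forward DS equation at both $\ga_1$ and $\ga_2$ and observe that both collapse to the same $m_\cA(z)$; there is no need to compare $\ga_1$ with $\ga_2$ at all. You should replace the open-ended argument-principle sketch with this algebraic argument.
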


\begin{rmk}\label{rmk:alpha_range}
Since $m_{\cA}(z)\to 0$ and $z m_{\cA}(z)\to -1$ as $\Im(z)\to \infty$, the imaginary part of the denominator of the integrand on the right hand side of \eqref{eqn:LSD_signal_to_noisy} is asymptotically equivalent to $-\Im(z)$ as $\Im(z)\to \infty$, and so the integral is well-defined for all $z$ with $\Im(z)$ sufficiently large.
Note further that by the uniqueness theorem for analytic functions, knowing the values of $m_{\cA}(z)$ for $z$ with $\Im(z)$ sufficiently large is sufficient to determine $m_{\cA}(z)$ for all $z\in\bC^+.$
\end{rmk}

In practice, as the ESD of ${\bf S}_n$ is observable, we can replace $F$ with $F^{{\bf S}_n}$ and solve equation \eqref{eqn:LSD_signal_to_noisy} for $m_{\cA_n}(z)$. Since  $m_{\cA_n}(z)$ fully characterizes the ESD of $\cA_n$, this allows us to make inferences about the covariance structure of the underlying signals $A_n$.
In the simulation studies we explain in detail about how to implement this procedure in practice.

\subsection{Applications to PAV}\label{ssec:LSD_PAV}

The term $\Delta\overline{{\bf V}}_{2i}$ can be written in a more clear form by using the triangular kernel:
\begin{equation}\label{DelV}
\aligned
\Delta\overline{{\bf V}}_{2i}
=&\frac{1}{k}\sum_{j=0}^{k-1}\left({\bf V}_{((2i-1)k+j)/n}-{\bf V}_{((2i-2)k+j)/n}\right)\\
=&\frac{1}{k}\sum_{j=0}^{k-1} \sum_{\ell=1}^k \Delta {\bf V}_{(2i-2)k+j+\ell}\\
=&\sum_{|j|<k} \left(1-\frac{|j|}{k}\right) \Delta{\bf V}_{(2i-1)k+j}.
\endaligned
\end{equation}
Based on this, one can show that if dimension $p$ is fixed, then
\begin{equation}\label{eq:conv_noiseless_pav}
\cA_m:=\sum_{i=1}^m \Delta\overline{{\bf X}}_{2i} \cdot (\Delta\overline{{\bf X}}_{2i})^T \toop  \frac{\ICV}{3} \q  \mbox{as}\q n\to\infty.
\end{equation}
It is also easy to verify that
\[
\De\ol{\pvep}_{2i} \stackrel{d}{=} \sqrt{\dfrac 2k}  \ \e_i,
\]
where $\e_i$'s are \hbox{i.i.d.} random vectors with zero mean and covariance matrix $\pSi_e$.

The following Corollary is a direct consequence of Theorem \ref{thm:LSD_signal_noise}.
\begin{cor}\label{cor:A_PAV}
Suppose that
\begin{compactenum}\setcounter{enumi}{2}
 \item[]
 \begin{compactenum}
  \item\label{asm:X} for all $p$, $(\X_t)$ is a $p$-dimensional process for some drift process $(\pmu_t)$ and covolatility process $(\pTh_t)$ defined in \eqref{diffusion};
  \item\label{asm:A_conv} the ESD of $\cA_m$ converges to a probability distribution $F^{\cA}$ with Stieltjes transform denoted by $m_{\cA}(z)$;
  \item\label{asm:noise} the noise $(\pvep_i)_{1\le i\le n}$ are independent of $(\X_t)$, and are i.i.d. with zero mean and covariance matrix $\pSi_e=\si_p^2 \I$ for some $\si_p>0$ and $\si_p\to\si_e>0$ as $p\to\infty$;
  \item\label{asm:k_PAV} $k=\lfloor\theta\sqrt{n}\rfloor$ for some $\th\in(0,\infty)$, and $m=\lfloor\frac{n}{2k}\rfloor$ satisfy that $\lim_{p\to\infty} p/m=y>0$.
\end{compactenum}
\end{compactenum}
Then, almost surely, the ESD of PAV defined in \eqref{PAV} converges in distribution of a probability distribution $F$.
Moreover, if $F$ is supported by a finite interval $[a,b]$ with $a>0$ and possibly has a point mass at 0,
then for all $z\in\bC^+$ such that the integral on the right hand side of \eqref{eqn:m_A} below is well-defined,
$m_{\cA}(z)$ is determined by $F$ in that it uniquely solves the following equation
\begin{equation}\label{eqn:m_A}
m_{\cA}(z)= \displaystyle\int \dfrac{dF(\tau)}{\dfrac{\tau}{1-y\th^{-2}\si_e^2m_{\cA}(z)}-z(1-y\th^{-2}\si_e^2 m_{\cA}(z))+\th^{-2}\si_e^2(y-1)}
\end{equation}
in the set
\[
D_{\cA}':=\{\xi\in\bC: ~
z(1-y\th^{-2}\si_e^2 \xi)^2- \th^{-2}\si_e^2(y-1)(1-y\th^{-2}\si_e^2\xi) \in\bC^+\}.
\]
\end{cor}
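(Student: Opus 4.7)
The plan is to recast $\PAV$ in the information-plus-noise form required by Theorem \ref{thm:LSD_signal_noise} and then invoke it directly. Let $\A_m$ be the $p\times m$ matrix whose $i$-th column is $\sqrt{m}\,\Delta\ol{\X}_{2i}$, so that $(1/m)\A_m\A_m^T=\cA_m$. For the noise part, a direct computation gives $\var(\Delta\ol\varepsilon_{2i}^{(j)})=2\si_p^2/k$, and the rescaled scalars
\[
\tilde\ep_{j,i}:=\sqrt{k/2}\,\Delta\ol\varepsilon_{2i}^{(j)}/\si_p,\qquad 1\le j\le p,\ 1\le i\le m,
\]
are jointly i.i.d.\ with zero mean and unit variance: independence across $i$ follows from the pairing construction, which partitions the relevant noise indices into $m$ disjoint blocks of length $2k$, while independence across $j$ is ensured by $\pSi_e=\si_p^2\I$. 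Assembling these into $\pvep_m:=(\tilde\ep_{j,i})$ and setting $\sigma_m:=\si_p\sqrt{2m/k}$ yields the pathwise identity
\[
\PAV=\frac{1}{m}(\A_m+\sigma_m\pvep_m)(\A_m+\sigma_m\pvep_m)^T.
\]

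Next I would verify the hypotheses of Theorem \ref{thm:LSD_signal_noise} under the identification $n\leftrightarrow m$. Hypothesis \ref{asm:FA_conv} reduces to independence of $\A_m$ and $\pvep_m$---which follows from \ref{asm:noise}---together with $F^{\cA_m}\to F^{\cA}$, which is exactly \ref{asm:A_conv}. Hypothesis \ref{asm:eps} was just checked, and \ref{asm:yn_conv} is precisely \ref{asm:k_PAV}. For \ref{asm:sigma_n_conv}, the choices $k=\lfloor\th\sqrt{n}\rfloor$ and $m=\lfloor n/(2k)\rfloor$ give
\[
\sigma_m^2=\frac{2m\si_p^2}{k}=\frac{\si_p^2\,n}{k^2}+o(1)\longrightarrow \th^{-2}\si_e^2,
\]
so $\sigma_m\to\sigma:=\th^{-1}\si_e\in[0,\infty)$.

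Theorem \ref{thm:LSD_signal_noise} then delivers the claimed almost sure weak convergence of $F^{\PAV}$, and, whenever $F$ has the stated support, characterises $m_{\cA}(z)$ as the unique solution in $D_{\cA}$ of \eqref{eqn:LSD_signal_to_noisy} with parameters $(y,\sigma^2)=(y,\th^{-2}\si_e^2)$. Substituting this value of $\sigma^2$ into \eqref{eqn:LSD_signal_to_noisy} and into the definition of $D_{\cA}$ reproduces verbatim \eqref{eqn:m_A} and $D_{\cA}'$, which is the conclusion. No serious obstacle is anticipated, as the entire argument is a matter of identifying parameters; the one step that warrants a brief sanity check is the joint i.i.d.\ structure of $\pvep_m$, which hinges on the non-overlapping pairing used to define $\Delta\ol{\pvep}_{2i}$ together with the scalar form $\pSi_e=\si_p^2\I$---without the latter, the columns of $\pvep_m$ would have a non-identity covariance and a separate treatment would be required.
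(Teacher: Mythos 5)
Your reduction is exactly the identification the paper has in mind when it states that Corollary~\ref{cor:A_PAV} is ``a direct consequence of Theorem~\ref{thm:LSD_signal_noise},'' and the computations (the variance $2\si_p^2/k$ of each pre-averaged noise increment, the disjoint $2k$-blocks giving independence across $i$, the scaling $\sigma_m^2=2m\si_p^2/k\to \th^{-2}\si_e^2$, and the verbatim substitution $\si^2=\th^{-2}\si_e^2$ into \eqref{eqn:LSD_signal_to_noisy} and into $D_{\cA}$) are all correct.

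One step deserves more care than you give it: you assert that ``independence across $j$ is ensured by $\pSi_e=\si_p^2\I$.'' A diagonal covariance matrix only gives \emph{uncorrelatedness} of the components of $\pvep_i$, not independence, and Theorem~\ref{thm:LSD_signal_noise} genuinely requires the entries of $\pvep_m$ to be i.i.d.\ — which in turn requires the entries of each $\pvep_i$ to be jointly independent (and identically distributed). The paper's Assumption~\eqref{asm:noise} is itself slightly loose on this point, but the intended reading (consistent with Remark~\ref{rmk:thm_apply_general_case}) is that the scalar noises $\varepsilon_i^{(j)}$ are i.i.d.\ across both $i$ and $j$; with that understanding, your $\tilde\ep_{j,i}$ are indeed jointly i.i.d.\ with mean zero and unit variance and Theorem~\ref{thm:LSD_signal_noise} applies as you say. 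You do flag the noise structure as the delicate step, but the concern you articulate (``non-identity covariance'') is not quite the right one — the real issue is that uncorrelated does not imply independent, and this should be stated explicitly as an interpretation of Assumption~\eqref{asm:noise} rather than derived from $\pSi_e=\si_p^2\I$. With that clarification the proof is complete and agrees with the paper's intended argument.
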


\begin{rmk}\label{rmk:thm_apply_general_case}
Although Corollary \ref{cor:A_PAV} is stated for the case when noise components  have the same standard deviations, it can readily be applied to the case when the covariance matrix  ${\pmb\Sigma}_e$  is a general diagonal matrix, say ${\rm diag}(d_1^2,\ldots,d_p^2)$. To see this, let $d_{max}^2=\max(d_1^2,\ldots,d_p^2)$. We can then artificially add additional $\tilde{\pvep}_i$ to the original observations, where $\tilde{\pvep}_i$ are independent of $\pvep_i$, and are \hbox{i.i.d.} with zero mean  and covariance matrix $\widetilde{{\pmb\Sigma}}_e={\rm diag}(d_{max}^2-d_1^2,\ldots,d_{max}^2-d_p^2)$. The noise components in the modified observations then have the same standard deviation $d_{max}$, and Corollary \ref{cor:A_PAV} can  be applied. Note that the variances, $d_1^2,\ldots,d_p^2$, can be consistently estimated, see, for example, Theorem A.1 in \cite{ZMA05}. A similar remark applies to Theorem \ref{thm:LSD_signal_noise}.
\end{rmk}

\subsection{Further inference about ICV}
Corollary \ref{cor:A_PAV} allows us to estimate the ESD of $\cA_m$. In light of the convergence \eqref{eq:conv_noiseless_pav}, this would have been sufficient for us to make inferences about the ICV if  the convergence \eqref{eq:conv_noiseless_pav} held as well in the high-dimensional case. Unfortunately, it is not the case, and a further step to go from $\cA_m$ to \ICV\ is needed. Such an inference is generally impossible, as can be seen as follows. \ICV\ is an integral $\int_0^1 {\pmb\Theta}_t{\pmb\Theta}_t^T \, dt$. In the simple situation where $\pmu_t\equiv 0$ and $\pTh_t$ is deterministic, the building blocks in defining $\cA_m$, $\Delta{\bf X}_{i}$, are multivariate normals with mean 0 and covariance matrices $\int_{(i-1)/n}^{i/n}{\pmb\Theta}_t{\pmb\Theta}_t^T\, dt.$ The bottom line is, all the $n$ covariance matrices, $\int_{(i-1)/n}^{i/n}{\pmb\Theta}_t{\pmb\Theta}_t^T\, dt$ for $i=1,\ldots,n$, could be very different from the \ICV! We can easily change the $n$ covariance matrices $\int_{(i-1)/n}^{i/n}{\pmb\Theta}_t{\pmb\Theta}_t^T\, dt$ and hence the distributions of $\Delta{\bf X}_{i}$ \emph{without} changing \ICV. And as both the dimension $p$ and observation frequency $n$ go to infinity, there are just too much freedom in the underlying distributions which makes the inference about \ICV\  impossible. Certain structural assumptions are necessary to  turn the impossible into the possible. The simplest one is to assume ${\pmb\Theta}_t\equiv {\pmb\Theta}$, in which case $\Delta{\bf X}_{i}$ are \hbox{i.i.d.} The apparent shortcoming about this assumption is that it could not capture stochastic  volatility which is a stylized feature in financial data. The following class of processes, introduced in \cite{ZL11}, accommodates both stochastic  volatility  and leverage effect and in the meanwhile makes the inference about \ICV\ still possible (and as we will see soon that the theory is already much more complicated than the \hbox{i.i.d.} setting).

\begin{defn}\label{classC}
Suppose that $({\bf X}_t)$ is a $p$-dimensional process satisfying~(\ref{diffusion}).
We say that $({\bf X}_t)$ belongs to Class $\mathcal{C}$ if, almost surely,
there {exist} $(\ga_t)\in D([0,1];\bR)$
and $\pLa$ a $p\times p$ matrix
satisfying tr$(\pLa \pLa^T)=p$ such that
\begin{eqnarray}\label{coval:product}
\pTh_t=\gamma_t \ \pLa,
\end{eqnarray}
where $D([0,1];\bR)$ stands for the space of c\`{a}dl\`{a}g functions from $[0,1]$ to $\bR$.
\end{defn}

\begin{rmk}\label{rmk:ga_lam}
The convention that  tr$(\pLa \pLa^T)=p$  is made to
resolve the non-identifiability built in the formulation \eqref{coval:product}, in which
one can multiply $(\ga_{t})$ and divide $\pLa$ by a same constant
without modifying the process $(\pTh_t)$. It is thus not a restriction.
\end{rmk}

Class $\mathcal{C}$ incorporates some widely used models as special cases:
\begin{itemize}
    \item The simplest case is when the drift $(\pmu_t)\equiv 0$ and $(\ga_t)\equiv\ga$, in which case the returns $\De\X_i$ are \hbox{i.i.d.}  $ N(0,\ga^2/n\cdot\pLa \pLa^T)$.
    \item More generally, again when the drift $(\pmu_t)\equiv 0$ while $(\ga_t)$ is independent of the underlying Brownian motion $(\W_t)$, the returns $\De\X_i$ follow mixed normal distributions.
    \begin{itemize}
    \item Mixed normal distributions, or their asymptotic equivalent form in the high-dimensional setting\footnote{See Section 2 of \cite{ElKaroui13} for the asymptotic equivalence between the mixed normal distributions and the elliptic distributions in the high-dimensional setting.}, elliptic distributions have been widely used in financial applications. \cite{MFE05} state that ``elliptical distributions ... provided far superior models to the multivariate normal for daily and weekly US stock-return data'', and ``multivariate return data for groups of returns of a similar type often look roughly elliptical.''
      \item More recently, El Karoui in a series of papers (\cite{ElKaroui09}, \cite{ElKaroui10} and \cite{ElKaroui13}) studied the Markowitz optimization problem under the setting that the returns follow mixed normal/elliptic distributions.
    \end{itemize}
    \item Furthermore, Class $\mathcal{C}$ allows the drift $(\pmu_t)$ to be non-zero, and more importantly, the $(\gamma_t)$ process to be stochastic and even dependent on the Brownian motion $(\W_t)$ that drives the price process, thus featuring the so-called leverage effect in financial econometrics, which is an important stylized fact of financial returns and has drawn a lot of attention recent years, see, for example, \cite{AFL13} and \cite{WM14}.
\end{itemize}

Observe that if $\(\X_t\)$ belongs to Class $\mathcal{C}$, then the ICV matrix
\begin{eqnarray}
\ICV=\int_0^1 \gamma_t^2 \, dt \cdot\breve{\pSi}, ~~~~~~
{\rm where} ~~ \breve{\pSi}={\pLa}{\pLa}^T .
\label{ICV}
\end{eqnarray}
Furthermore, if the drift process ${\pmb \mu}_t\equiv 0$ and $(\gamma_t)$ is independent of $({\bf W}_t)$, then,
 conditional on $(\ga_t)$ and using \eqref{DelV}, we have
\[
\Delta\overline{{\bf X}}_{2i} \ \stackrel{d}{=} \
\sqrt{w_i}
\ \breve{{\pmb\Sigma}}^{1/2} \ {\bf Z}_i,
\]
where ${\bf Z}_i=(Z_i^{1},\ldots,Z_i^{p})^T$ consists of independent standard normals, and
\begin{eqnarray}
w_i&=&
\sum_{|j|<k} \left(1-\frac{|j|}{k}\right)^2 \int_{\frac{(2i-1)k+j-1}{n}}^{\frac{(2i-1)k+j}{n}} \ \gamma_t^2 \, dt.
\label{wie}
\end{eqnarray}
It follows that
\begin{eqnarray*}
\cA_m=\sum_{i=1}^m \Delta\overline{{\bf X}}_{2i} \cdot (\Delta\overline{{\bf X}}_{2i})^T & \stackrel{d}{=} & \sum_{i=1}^m \ w_i \ \breve{\pSi}^{1/2}  \Z_i  \Z_i^T  \breve{\pSi}^{1/2}.
\end{eqnarray*}

\subsubsection{Further inference based on $\cA_m$}\label{sssec:inf_PAV}
Using Corollary \ref{cor:A_PAV} and Theorem~1 in \cite{ZL11} we establish the following result concerning the LSD of $\cA_m$.

We put the following assumptions on the underlying process. They are inherited from Proposition 5 of \cite{ZL11}, and we refer the readers to that article for some further background and explanations. Observe in particular that Assumption \eqref{asm:leverage} allows the covolatility process to be dependent on the Brownian motion that drives the price processes. Such a dependence allows us to capture the so-called leverage effect in financial econometrics. Assumptions \eqref{asm:Sigma_bdd} and \eqref{asm:gamma_conv} are about the spectral norm of the \ICV\ matrix. We do not require the norm to be bounded, allowing, for example, spike eigenvalues.

\smallskip
\noindent{\bf Assumption  C:}
\begin{compactenum}\setcounter{enumi}{3}
 \item[]
 \begin{compactenum}
  \item\label{asm:X_in_C} For all $p$, $({\bf X}_t)$ is a $p$-dimensional process in Class $\mathcal{C}$ for some drift process ${\pmb\mu}_t=(\mu_t^{1},\ldots,\mu_t^{p})^T$ and covolatility process  $({\pmb\Theta}_t)=(\gamma_t{\pmb\Lambda})$;
  \item\label{asm:mu_bdd} there exists a $C_0<\infty$ such that for all $p$ and all $j=1,\ldots,p$, $|\mu_t^{j}|\le C_0$ for all $t\in [0,1)$ almost surely;
  \item\label{asm:Sigma_conv} as $p\to\infty$, the ESD of $\breve{{\pmb\Sigma}}={\pmb\Lambda}{\pmb\Lambda}^T$ converges in distribution to a probability distribution $\breve{H}$;
  \item\label{asm:Sigma_bdd} there exist $C_1<\infty$ and $\kappa<1/6$ such that for all $p$, $\|\breve{{\pmb\Sigma}}\|\le C_1p^{\kappa}$  almost surely;
  \item\label{asm:leverage} there exists  a sequence of index sets~$\mathcal{I}_p$ satisfying $\mathcal{I}_p\subset \{1,\ldots,p\}$ and $\#\mathcal{I}_p=o(p)$ such that $(\gamma_t)$ may depend on
      $({\bf W}_t)$ but only on $(W_t^{j}: j\in \mathcal{I}_p)$;
  \item\label{asm:gamma_conv}  there exists a $C_2<\infty$ such that  for all $p$ and for all $t\in [0,1)$,    $|\gamma_t|\le C_2$ almost surely, and additionally, almost surely,  $(\ga_t)$
  converges uniformly to a nonzero process $(\gamma_t^*)$ that is piecewise continuous with finitely many jumps.
\end{compactenum}
\end{compactenum}

\begin{thm}\label{thm:main}
Suppose that Assumptions \eqref{asm:X_in_C}-\eqref{asm:gamma_conv}  and \eqref{asm:k_PAV} hold, then as $p\to\infty$,
\begin{compactenum}[(i)]
\item the ESDs of \ICV\ and $\cA_m$  converge to probability distributions $H$ and $F^{\cA}$ respectively, where
\begin{equation}\label{Hx}
H(x) \ = \ \breve{H}(x/\zeta) \q \mbox{for  all }  x\geq 0 ~ {\rm with} ~ \zeta=\int_0^1(\gamma_t^*)^2\, dt;
\end{equation}
\item $F^{\cA}$ and $H$ are related as follows:
\begin{eqnarray}\label{mz_mainresult}
m_{\cA}(z)=-\dfrac{1}{z}\int\dfrac{\zeta}{\tau M(z)+\zeta} ~ dH(\tau),
\end{eqnarray}
where $M(z)$, together with another function $\wt{m}(z)$, uniquely solve the following equations in $\bC^+\times \bC^+$
\begin{equation}\label{Mm_z}
\left\{
\begin{array}{lll}
M(z) &=& -\dfrac{1}{z} \displaystyle\int_0^1 \dfrac{(1/3)(\ga_s^*)^2}{1+y\wt{m}(z) (1/3)(\ga_s^*)^2}ds,  \\
\wt{m}(z) &=& {-\dfrac{1}{z} \displaystyle\int \dfrac{\tau}{\tau M(z)+\zeta} dH(\tau).}
\end{array}
\right.
\end{equation}
\end{compactenum}
\end{thm}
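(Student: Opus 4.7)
The plan is to reduce the analysis of $\cA_m$ to a weighted sample covariance matrix of the form treated in Theorem~1 of \cite{ZL11}, and to then read off the system \eqref{mz_mainresult}--\eqref{Mm_z} from that theorem combined with the scaling \eqref{Hx}.

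\emph{Step 1 (part (i), convergence of $F^{\ICV}$).} By \eqref{ICV}, $\ICV=\bigl(\int_0^1\gamma_t^2\,dt\bigr)\breve{\pSi}$. The uniform convergence in Assumption \eqref{asm:gamma_conv} yields $\int_0^1\gamma_t^2\,dt\to\zeta$ almost surely. Combined with $F^{\breve\pSi}\Rightarrow\breve H$ (Assumption \eqref{asm:Sigma_conv}) and the elementary identity $F^{a\breve\pSi}(x)=F^{\breve\pSi}(x/a)$, this gives $F^{\ICV}\Rightarrow H$ with $H(x)=\breve H(x/\zeta)$.

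\emph{Step 2 (reduction to a clean model for $\cA_m$).} The first task is to strip off the drift and the leverage coupling so that $\cA_m$ can be represented as a weighted sample covariance matrix driven by i.i.d.\ Gaussian vectors. The drift contribution to each $\De\ol\X_{2i}$ is coordinatewise $O(k/n)=O(1/\sqrt n)$ by Assumption \eqref{asm:mu_bdd}; a Hoffman--Wielandt / rank-type estimate combined with the $p^\kappa$ growth of $\|\breve\pSi\|$ (Assumption \eqref{asm:Sigma_bdd}) shows that dropping the drift perturbs $F^{\cA_m}$ by $o_p(1)$ in L\'evy distance. Assumption \eqref{asm:leverage} says that the leverage enters through only $o(p)$ coordinates of $(\W_t)$; conditioning on those coordinates and removing them yields a finite-rank perturbation which, by the rank inequality for distribution functions, does not affect the LSD. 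After these reductions, using \eqref{DelV} and conditioning on $(\gamma_t)$,
\[
\cA_m\ \stackrel{d}{=}\ \sum_{i=1}^m w_i\,\breve\pSi^{1/2}\Z_i\Z_i^T\breve\pSi^{1/2},
\]
with $\Z_i$ i.i.d.\ $N(0,\I_p)$ and $w_i$ as in \eqref{wie}.

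\emph{Step 3 (identifying the limiting weight measure).} Since $\sum_{|j|<k}(1-|j|/k)^2\sim 2k/3$ and each integrand $\gamma_t^2$ in \eqref{wie} varies slowly on a window of length $O(k/n)$, uniform convergence $\gamma_t\to\gamma_t^*$ plus Riemann-sum reasoning gives
\[
m\,w_i=\tfrac13(\gamma_{s_i}^*)^2+o(1)\ \ \text{for some }s_i\in[(2i-2)k/n,\,2ik/n],
\]
so that the empirical distribution of $(m w_i)_{i=1}^m$ converges weakly to the law of $\tfrac13(\gamma_U^*)^2$ with $U\sim\mathrm{Unif}[0,1]$; piecewise continuity of $(\gamma_t^*)$ then permits the corresponding Riemann sums to be rewritten as the integrals $\int_0^1(\cdot)\,ds$ appearing in \eqref{Mm_z}.

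\emph{Step 4 (applying Theorem~1 of \cite{ZL11}).} Theorem~1 of \cite{ZL11} governs the LSD of precisely the weighted sample covariance model in Step~2, producing a pair of coupled equations in $\bC^+\times\bC^+$ for functions $(M,\wt m)$ that jointly determine $m_{\cA}$. Substituting the limiting weight law from Step~3 and the limiting spectrum $\breve H$ of $\breve\pSi$, then using the change of variable $\tau\mapsto\zeta s$ to pass between $\breve H$ and $H=\breve H(\cdot/\zeta)$, reproduces exactly \eqref{Mm_z} and \eqref{mz_mainresult}. Uniqueness of $(M,\wt m)$ in $\bC^+\times\bC^+$ follows from the standard monotonicity/contraction argument for Mar\v cenko--Pastur-type fixed-point systems.

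\emph{Main obstacle.} The technically hardest step is Step~2, namely the simultaneous removal of the drift and the leverage coupling in the presence of a possibly unbounded $\|\breve\pSi\|$. The condition $\kappa<1/6$ in Assumption \eqref{asm:Sigma_bdd} is expected to enter here through the balancing of the $p^\kappa$ growth of $\|\breve\pSi\|$ against the $O(1/\sqrt n)$ drift and the $o(p)$ leverage dimension. Once this clean weighted model is in hand, invoking Theorem~1 of \cite{ZL11} is comparatively routine.
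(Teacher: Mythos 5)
Your proposal follows essentially the same route as the paper: convert $F^{\ICV}$ by scaling, strip the drift and leverage coupling to reduce $\cA_m$ to a weighted Gaussian sample covariance (the paper's Claim~1), identify the limiting weight profile $\tfrac13(\gamma_s^*)^2$ (the paper's Claim~2), and invoke Theorem~1 of \cite{ZL11}. The only tooling discrepancy is in the drift removal: rather than a Hoffman--Wielandt or rank estimate --- which would struggle with the cross terms $\wt\V_i\wt\Z_i^T$ appearing when $\cA_m$ is expanded --- the paper applies Lemma~\ref{lemma1} (Lemma~1 of \cite{ZL11}), which directly bounds the L\'evy distance between the two ESDs using only the coordinatewise bound $|\wt V_i^j|=O(k/n)=o(1/\sqrt p)$, so the $p^\kappa$ growth of $\|\breve\pSi\|$ does not enter the drift step at all.
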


\begin{rmk}\label{rmk:mainthm_usage}\
Based on Corollary \ref{cor:A_PAV} and Theorem \ref{thm:main}, we obtain the following two-step procedure to estimate the ESD of ICV:
\begin{figure}[H]
\begin{center}
\includegraphics[width=12cm]{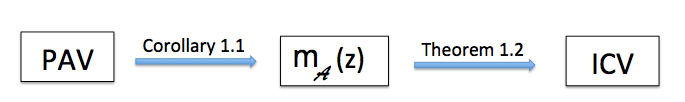}
\end{center}
\caption{A two-step procedure to estimate the ESD of ICV based on Corollary \ref{cor:A_PAV} and Theorem \ref{thm:main}.}
\label{fig:Step_1}
\end{figure}
\begin{compactenum}[(i)]
\item First, based on Corollary \ref{cor:A_PAV}, plug the ESD of PAV into equation \eqref{eqn:m_A} to solve for $m_\cA(z)$;
\item With the estimated $m_\cA(z)$ from the first step, use equations \eqref{mz_mainresult} and \eqref{Mm_z} in Theorem \ref{thm:main} to estimate the ESD of \ICV\  by generalizing the algorithms in \cite{ElKaroui08}, \cite{Mestre08}, and \cite{BCY10} \hbox{etc.}
\end{compactenum}
See the simulation studies for more detailed explanations about the estimation procedure.
\end{rmk}

\subsubsection{An alternative estimator}\label{sssec:B_n}
The aforementioned two-step procedure involves the $(\ga_s^*)$ which needs to be estimated in practice. Moreover, equations \eqref{mz_mainresult} and \eqref{Mm_z} involves three unknowns ($H, M(z)$ and $(\wt{m}(z))$) and the numerical solutions of these equations may be unstable. Motivated by this consideration, we draw ideas from \cite{ZL11} and propose an alternative estimator that overcomes these challenges. It is also worth mentioning that the alternative estimator allows for rather general dependence structures in the noise process, both cross-sectional and temporal. The temporal dependence between the microstructure noise has been documented in recent studies, see, for example, \cite{hansenlunde06}, \cite{UO09} and \cite{JLZ_noise_V2}.

Our alternative estimator is an extension of the time-variation adjusted RCV matrix introduced in \cite{ZL11} to our noisy setting.
To start, we fix an $\al\in(1/2,1)$ and $\th\in(0,\infty)$, and
let $k=\lfloor\th n^{\al}\rfloor$ and $m=\lfloor n/(2k)\rfloor$.
The time-variation adjusted PAV matrix is then defined as
\begin{equation}
\q \cB_m:= 3 \dfrac{\sum_{i=1}^m |\De\ol{\Y}_{2i}|^2}{m}\cdot\sum_{i=1}^m \dfrac{\De\ol{\Y}_{2i}(\De\ol{\Y}_{2i})^T}{|\De\ol{\Y}_{2i}|^2} \
= \ 3 \dfrac{\sum_{i=1}^m |\De\ol{\Y}_{2i}|^2}{p}\ \widetilde{\pSi},
\label{eqn:B_n}
\end{equation}
where
\begin{eqnarray}
\widetilde{\pSi}:= \ \dfrac{p}{m} \ \sum_{i=1}^m \dfrac{\De\ol{\Y}_{2i}(\De\ol{\Y}_{2i})^T}{|\De\ol{\Y}_{2i}|^2}.
\label{tSi}
\end{eqnarray}

Note that here window length $k$ has a higher order than  in Theorem~\ref{thm:main}.
The reason is that, after pre-averaging, the underlying returns are $O_p(\sqrt{k/n})$ and the noises are $O_p(\sqrt{1/k})$. In Theorem \ref{thm:main}, we balance the orders of the two terms by choosing $k=O(\sqrt{n})$ to achieve the optimal convergence rate.
In  Theorem \ref{thm:B_n} below,  we take $k=O(n^{\alpha})$ for some $\alpha>1/2$  to eliminate the impact of noise.

We first recall the concept of $\rho$-mixing coefficients.
\begin{defn} \label{rho_corr}
Suppose that $U=(U_k,k\in\zz{Z})$ is a stationary time series.  For $-\infty \le j\le \ell \le \infty$, let $\cF_j^\ell$ be the $\sigma$-field  generated by the random variables $(U_k: j\le k\le\ell)$.
The $\rho$-mixing coefficients  are defined as
\[
\rho(r) = \sup_{f\in\cL^2(\cF_{-\infty}^0), ~ g\in\cL^2(\cF_r^{\infty})} \ \left| {\rm Corr}(f,g)\right|, \q \mbox{for}\q r\in\zz{N},
\]
where for any probability space $\cD$,  $\cL^2(\cD)$ refers to the space of square-integrable, $\cD$-measurable random variables.
\end{defn}

We now introduce  a number of assumptions. Observe that Assumption~\eqref{asm:eps_general} says that we allow for rather general dependence structures in the noise process, both cross-sectional and temporal. We actually do not put any restrictions on the cross-sectional dependence, and even the dependence between the microstructure noise and the price process is allowed. Note also that \cite{JLZ_noise_V2} provides an approach to estimate the decay rate of the $\rho$-mixing coefficients.   Assumption \eqref{asm:leverage_2} is again about the dependence between the covolatility process and the Brownian motion that drives the price processes. Assumption \eqref{asm:vol_bdd} is about the boundedness of individual volatilities.

\begin{compactenum}\setcounter{enumi}{4}
 \item[]
 \begin{compactenum}
\item\label{asm:eps_general}  For all $j=1,\cdots,p$, the noises $(\vep_i^j)$ is stationary, have mean 0 and  bounded $4\ell$th moments and with $\rho$-mixing coefficients $\rho^j(r)$ satisfying  $\max_{j=1,\cdots,p}   \rho^j(r)=O(r^{-\ell})$ for some integer $\ell \geq 2$ as $r\to\infty$;
\item\label{asm:leverage_2} there exists a $0\le \de_1<1/2$ and a sequence of index sets $\mathcal{I}_p$ satisfying $\mathcal{I}_p\subset\{1,\ldots,p\}$ and $\# \mathcal{I}_p=O(p^{\de_1})$ such that $(\ga_t)$ may depend on $(\W_t)$ but only on $(W_t^{j}: j\in\mathcal{I}_p)$;
  \item\label{asm:gamma_bdd} there exists a $C_1<\infty$ such that for all $p$, $|\ga_t|\in\(1/C_1,C_1\)$ for all $t\in[0,1)$ almost surely;
  \item\label{asm:vol_bdd}  there exists a $C_2<\infty$ such that for all $p$ and  all $j$, the individual volatilities $\si_t=\sqrt{(\ga_t)^2\cdot\sum_{k=1}^p (\Lambda_{jk})^2}\in\(1/C_2,C_2\)$ for all $t\in[0,1]$ almost surely;
  \item\label{asm:Sigma_bdd_2} there exist $C_3<\infty$ and $0\le\de_2<1/2$ such that for all $p$, $\|\ICV\|\le C_3 p^{\de_2}$ almost surely;
  \item the $\de_1$ in \eqref{asm:leverage_2} and $\de_2$ in \eqref{asm:Sigma_bdd_2} satisfy that $\de_1+\de_2<1/2$;
  \item\label{asm:ym_conv}  $k=\lfloor\theta n^{\alpha}\rfloor$ for some $\th\in(0,\infty)$ and $\alpha\in [(3+\ell)/(2\ell+2) ,1)$,   and $m=\lfloor\frac{n}{2k}\rfloor$ satisfy that $\lim_{p\to\infty} p/m=y>0$,
  where $\ell$ is the integer in Assumption \eqref{asm:eps_general}.
 \end{compactenum}
\end{compactenum}

\begin{rmk}\label{rmk:compatibility} Careful readers may have noticed that Assumptions \eqref{asm:k_PAV} and \eqref{asm:ym_conv} are mathematically incompatible as  Assumption \eqref{asm:k_PAV} requires  $p=O(\sqrt{n})$ while  Assumption \eqref{asm:ym_conv} requires $p=O(n^{1-\al})$ for some $\al\in (1/2,1)$. The two assumptions are, however, perfectly compatible in practice when we deal with finite samples. In fact, take the choices of $(p,n,k)$ in the simulation studies (Section \ref{sec:simulation} below)  for example.
There we take $(p,n)=(100,\ 23,400)$. When applying Corollary \ref{cor:A_PAV} and Theorem \ref{thm:main}, we take $k=\lfloor0.5\sqrt{n}\rfloor=76$, which leads to $y=p/\lfloor n/2k\rfloor\approx 0.7$ in Assumption \eqref{asm:k_PAV}; while when applying Theorem \ref{thm:B_n} below, we take $k=\lfloor 1.5 n^{0.6}\rfloor=627$, which gives  $y=p/\lfloor n/2k\rfloor\approx 5.6$ in Assumption \eqref{asm:ym_conv}.
\end{rmk}

We have the following convergence result regarding the ESD of the alternative estimator.

\begin{thm}\label{thm:B_n}
Suppose that for all $p$, $({\bf X}_t)$ is a $p$-dimensional process in Class $\mathcal{C}$ for some drift process ${\pmb\mu}_t=(\mu_t^{1},\ldots,\mu_t^{p})^T$ and covolatility process $({\pmb\Theta}_t)= (\gamma_t{\pmb\Lambda}) $. Suppose also that Assumptions (\ref{asm:mu_bdd}), \eqref{asm:Sigma_conv} and \eqref{asm:gamma_conv}
 in Theorem \ref{thm:main} hold. Under Assumptions \eqref{asm:eps_general}-\eqref{asm:ym_conv}, we have as $p\to\infty$,
the ESDs of $\ICV$ and $\cB_m$ converge almost surely to probability distributions $H$ and
$F^\cB$, respectively, where $H$ satisfies \eqref{Hx}, and $F^\cB$ is determined by $H$ in that the Stieltjes transform of $F^\cB$, denoted by $m_\cB(z)$, satisfies the  following (standard) Mar\v{c}enko-Pastur equation
\begin{eqnarray}\label{eqn:mp}
~~~~ ~~  m_\cB(z)
=\int_{\tau\in\bR}\dfrac{1}{\tau\(1-y(1+z m_\cB(z))\)-z} \ dH(\tau),
~~{\rm for}~ z\in\bC^+.
\end{eqnarray}
\end{thm}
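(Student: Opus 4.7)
The plan has three stages. First, show that the microstructure noise makes an asymptotically negligible contribution to $\cB_m$, so that both the matrix $\widetilde{\pSi}$ and the scalar factor $3\sum_i|\De\ol{\Y}_{2i}|^2/p$ may be replaced by their noiseless analogs built from the pre-averaged signal returns $\De\ol{\X}_{2i}$. Second, use the Class $\mathcal{C}$ representation to show that self-normalizing each outer product by its own squared norm strips off the random, time-varying factor $w_i$, converting the noiseless analog of $\widetilde{\pSi}$ into a genuine standard sample covariance matrix of i.i.d.\ $\breve{\pSi}^{1/2}\Z_i$'s. Third, identify the scalar factor as converging to $\zeta$, so that $\cB_m\approx \zeta\cdot\widetilde{\pSi}$; applying the classical Mar\v{c}enko--Pastur theorem with population matrix $\breve{\pSi}$ and transferring from $\breve H$ to $H$ via \eqref{Hx} then delivers \eqref{eqn:mp}.

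For Stage 1, with $k=\lfloor\theta n^{\alpha}\rfloor$ and $\alpha>1/2$, the signal $\De\ol{\X}_{2i}$ has coordinates of order $\sqrt{k/n}$ while the pre-averaged noise $\De\ol{\pvep}_{2i}$ has coordinates of order $1/\sqrt{k}$, so the coordinate-wise signal-to-noise ratio grows like $n^{\alpha-1/2}\to\infty$. Using the polynomial $\rho$-mixing and bounded $4\ell$th moments of Assumption \eqref{asm:eps_general} together with Rosenthal-type inequalities for $\rho$-mixing sequences in the spirit of \cite{JLZ_noise_V2}, I would establish the uniform bounds
\[
\max_{1\le i\le m}\bigl|\,|\De\ol{\Y}_{2i}|^2-|\De\ol{\X}_{2i}|^2\,\bigr| = o_p\!\bigl((k/n)\,p\bigr), \q \frac{p}{m}\sum_{i=1}^m\Bigl\|\frac{\De\ol{\Y}_{2i}(\De\ol{\Y}_{2i})^T}{|\De\ol{\Y}_{2i}|^2}-\frac{\De\ol{\X}_{2i}(\De\ol{\X}_{2i})^T}{|\De\ol{\X}_{2i}|^2}\Bigr\|=o_p(1).
\]
The threshold $\alpha\ge(3+\ell)/(2\ell+2)$ is precisely what is needed to beat the $m^{1/\ell}$-type factor that appears when taking the maximum over $m$ quadratic-form fluctuations under only polynomial mixing. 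The same bounds give $3\sum_i|\De\ol{\Y}_{2i}|^2/p=3\sum_i|\De\ol{\X}_{2i}|^2/p+o_p(1)$.

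For Stages 2 and 3, Assumption \eqref{asm:mu_bdd} makes the drift contribution to $\De\ol{\X}_{2i}$ of order $k/n$, negligible relative to the Brownian part of order $\sqrt{k/n}$. Conditioning on $(\gamma_t)$ and using the representation \eqref{DelV} under Class $\mathcal{C}$, $\De\ol{\X}_{2i}\stackrel{d}{=}\sqrt{w_i}\,\breve{\pSi}^{1/2}\Z_i+o_p(\sqrt{k/n})$ with $\Z_i\sim N(0,\I)$. The leverage allowed by Assumption \eqref{asm:leverage_2} is handled by splitting $\Z_i$ into coordinates indexed by $\mathcal{I}_p$ and its complement: the ``in'' block contributes at most $\|\breve{\pSi}\|\cdot \#\mathcal{I}_p=O(p^{\de_1+\de_2})=o(p)$ by Assumption \eqref{asm:Sigma_bdd_2} and $\de_1+\de_2<1/2$, and is absorbed into an error. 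The Hanson--Wright inequality then yields $\Z_i^T\breve{\pSi}\Z_i/p\to 1$ uniformly in $i\le m$ (using $\tr(\breve{\pSi})=p$ from Remark \ref{rmk:ga_lam}), so
\[
\frac{\De\ol{\X}_{2i}(\De\ol{\X}_{2i})^T}{|\De\ol{\X}_{2i}|^2}\approx \frac{\breve{\pSi}^{1/2}\Z_i\Z_i^T\breve{\pSi}^{1/2}}{p} \q\mbox{and}\q \widetilde{\pSi}\approx \breve{\pSi}^{1/2}\Bigl(\tfrac{1}{m}\sum_{i=1}^m\Z_i\Z_i^T\Bigr)\breve{\pSi}^{1/2}.
\]
Combining the triangular-kernel identity $\sum_{|j|<k}(1-|j|/k)^2\sim 2k/3$ in \eqref{wie} with a Riemann sum of $(\gamma_t)^2$ over the $m$ windows of width $2k/n$ and Assumption \eqref{asm:gamma_conv} gives $3\sum_{i=1}^m w_i\to\zeta$ a.s., hence $3\sum_i|\De\ol{\X}_{2i}|^2/p\to\zeta$ and, by Stage 1, $3\sum_i|\De\ol{\Y}_{2i}|^2/p\toop\zeta$. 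Thus $\cB_m$ and $\zeta\cdot\widetilde{\pSi}$ share the same limiting ESD; the classical Mar\v{c}enko--Pastur theorem gives the limit $G$ of the ESD of $\widetilde{\pSi}$ with population distribution $\breve H$ and ratio $y$, and the change of variables $\tau\mapsto\zeta\tau$ in the MP equation for $m_G$, combined with $H(x)=\breve H(x/\zeta)$ from \eqref{Hx}, turns it into \eqref{eqn:mp} for $m_\cB$.

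The main obstacle is Stage 1: the uniform self-normalized concentration of the noisy quadratic forms $|\De\ol{\Y}_{2i}|^2$ and outer products $\De\ol{\Y}_{2i}(\De\ol{\Y}_{2i})^T$ around their signal analogs, uniformly in $i\le m$, under only polynomial $\rho$-mixing and bounded $4\ell$th moments on the noise. This is precisely what forces the threshold $\alpha\ge(3+\ell)/(2\ell+2)$ and will require careful truncation combined with block moment inequalities for $\rho$-mixing sequences. Stages 2 and 3 are essentially the high-dimensional adaptation of the arguments in \cite{ZL11}, with Assumptions \eqref{asm:leverage_2}--\eqref{asm:Sigma_bdd_2} playing the roles of controlling the leverage perturbation and of enabling Hanson--Wright-style quadratic form concentration in place of a classical bounded-operator-norm assumption.
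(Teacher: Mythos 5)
Your three-stage plan has the same skeleton as the paper's proof: Stage~1 corresponds to the reduction in Proposition~\ref{pthm3} from noisy to noiseless pre-averaged returns, Stage~2 to the representation \eqref{rx} and the elimination of the $w_i$ factors, and Stage~3 to Lemma~\ref{pthm3_a} plus the final appeal to the Mar\v{c}enko--Pastur theorem via the argument at the end of Proposition~8 in \cite{ZL11}. The thresholds $\alpha\geq(3+\ell)/(2\ell+2)$, the Doukhan--Louhichi/Rosenthal-type moment bound for the $\rho$-mixing noise, the drift being $O(k/n)$ and negligible against the Brownian part $O(\sqrt{k/n})$, the leverage split by $\mathcal{I}_p$, and the Riemann sum for $3\sum_i w_i\to\zeta$ all match the paper.

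However, the second bound you propose for Stage~1 is a genuine gap. You ask for
\[
\dfrac{p}{m}\sum_{i=1}^m\Bigl\|\dfrac{\De\ol{\Y}_{2i}(\De\ol{\Y}_{2i})^T}{|\De\ol{\Y}_{2i}|^2}-\dfrac{\De\ol{\X}_{2i}(\De\ol{\X}_{2i})^T}{|\De\ol{\X}_{2i}|^2}\Bigr\|=o_p(1),
\]
but each summand is the spectral norm of the difference of two rank-one projections, which is of order $|\De\ol{\pvep}_{2i}|/|\De\ol{\X}_{2i}|\asymp\sqrt{n/k^2}=n^{(1-2\alpha)/2}$. With $p\asymp m\asymp n^{1-\alpha}$, the left side is then of order $p\cdot n^{(1-2\alpha)/2}=n^{(3-4\alpha)/2}$, which tends to zero only when $\alpha>3/4$. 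For $\ell\geq 4$ the admissible range $\alpha\geq(3+\ell)/(2\ell+2)$ dips strictly below $3/4$, so this bound fails exactly where it is supposed to be applied. The paper avoids this by reducing the LSD equality to the \emph{entrywise} bound \eqref{ep_x}, namely $\max_{i,j}\sqrt{p}\,|\De\ol{\varepsilon}_{2i}^{\,j}|/|\De\ol{\X}_{2i}|\to 0$ almost surely, and then invoking the L\'evy-metric perturbation lemma for outer-product sums (Lemma~1 of \cite{ZL11}, reproduced as Lemma~\ref{lemma1}), which only requires the perturbation vectors to have entries $o(1/\sqrt{p})$ uniformly, not a vanishing spectral norm. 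Your first displayed Stage-1 bound on $\max_i\bigl||\De\ol{\Y}_{2i}|^2-|\De\ol{\X}_{2i}|^2\bigr|$ is fine and is what \eqref{prop1} delivers; the matrix comparison \eqref{prop2} must be handled via Lemma~\ref{lemma1}, not via a spectral-norm sum. Finally, the theorem asserts almost sure convergence of the ESDs, so you should upgrade your $o_p$ statements to almost-sure ones through the Borel--Cantelli argument that is already built into the moment bound \eqref{dev_lim_eps}.
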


\begin{rmk}\label{rmk:thm3}
Theorem \ref{thm:B_n} says that the LSDs of ICV and $\cB_m$ are related via  the Mar\v{c}enko-Pastur equation.  Several algorithms have been developed to recover $H$ by inverting the Mar\v{c}enko-Pastur equation, see, for example, \cite{ElKaroui08, Mestre08, BCY10} \hbox{etc.}
We can therefore readily estimate the ESD of ICV by using these existing algorithms.
\end{rmk}

\begin{figure}[H]
\begin{center}
\centering
\includegraphics[width=9cm]{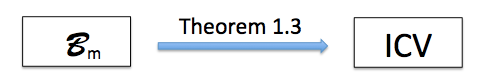}
\end{center}
\caption{Estimate the ESD of ICV based on Theorem \ref{thm:B_n}.}
\label{fig:Step_1}
\end{figure}

The rest of the paper is organized as follows. Section \ref{sec:simulation} demonstrates
how to implement both the two-step procedure introduced in Remark \ref{rmk:mainthm_usage} and the more direct method in Remark \ref{rmk:thm3} to estimate the spectral distribution of underlying covariance matrix based on noisy observations.
The proof of Theorem \ref{thm:LSD_signal_noise} is given in Section \ref{sec:proofs}.
Section \ref{sec:conclusion} concludes.
The proofs of some lemmas and Theorems \ref{thm:main} and \ref{thm:B_n} are given in the Appendix \ref{appendix:pfs}.  

\section{Simulation studies}\label{sec:simulation}

In this section, we illustrate how to make inferences using Corollary \ref{cor:A_PAV},
Theorems \ref{thm:main} and  \ref{thm:B_n}.
In particular, we will show how to estimate the ESD of ICV by using (1) Corollary \ref{cor:A_PAV} and Theorem~\ref{thm:main} based on PAV,  and (2) Theorem \ref{thm:B_n}  based on the alternative estimator  $\cB_m$.

We consider a stochastic U-shaped $(\ga_t)$ process  as follows:
\[
d\gamma_t=-\rho (\ga_t-\mu_t)\,dt +\sigma\, d\wt{W}_t, \q \mbox{for}\q t\in [0,1],
\]
where  $\rho=10$, $\sigma=0.05$,
\[
\mu_t \ = \ 2 \sqrt{0.0009+0.0008\cos(2\pi t)},
\]
and $\wt{W}_t=\sum_{i=1}^p W_t^{i}/ \sqrt{p}$ with $W_t^{i}$ being the $i$th component of the Brownian motion $(\W_t)$ that drives the price process. Observe that such a formulation makes $(\ga_t)$ to be dependent on \emph{all} the  component of the underlying Brownian motion, hence Assumptions \eqref{asm:leverage} and \eqref{asm:leverage_2} are actually both violated. However, we shall see soon that our methods still work well. A sample path of $(\ga_t)$ is given below.
\begin{figure}[H]
\begin{center}
\includegraphics[width=10cm]{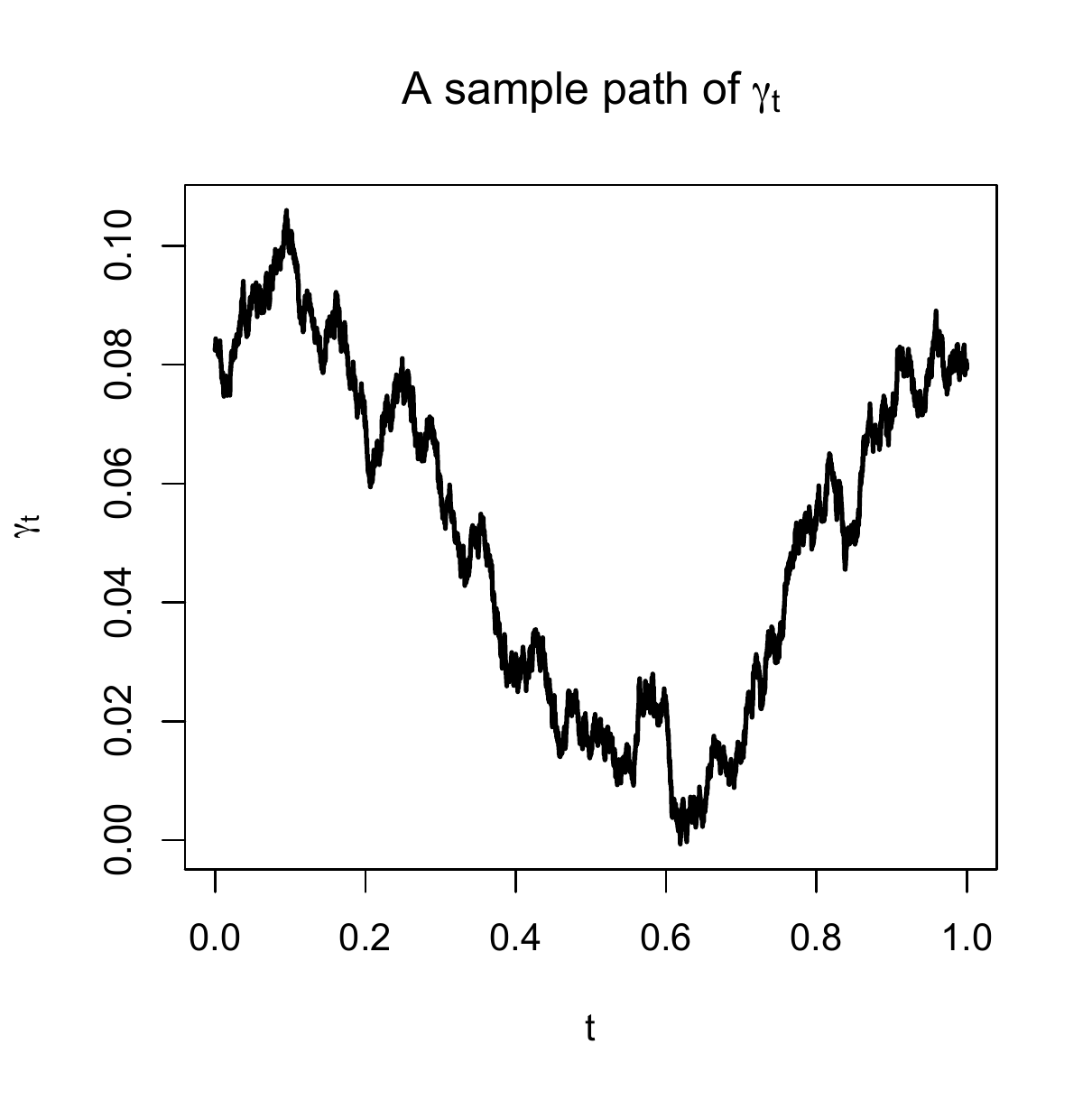}
\end{center}
\caption{A sample path of the process $(\ga_t)$.}
\label{fig:gamma_t}
\end{figure}
Next, the matrix $\breve{\pSi}$ is taken to be $UD U^T$ where $U$ is a random orthogonal matrix and $D$ is a diagonal matrix whose diagonal entries are drawn independently from Beta$(1,3)$ distribution.
With such  $(\ga_t)$ and $\breve{\pSi}$, the individual daily volatilities are around $3\%$, which is similar to what one observes in practice.
The latent log price process $(\X_t)$ follows
\[
d\X_t =\ga_t  \pLa\, d\W_t, \q \mbox{where}\q \pLa =\breve{\pSi}^{1/2}.
\]
Finally, the noise $(\pvep_i)_{1\le i\le n}$ are drawn from \hbox{i.i.d.} $N(0,0.0002\, \I)$.

In the studies below, the dimension, i.e., the number of assets $p$ is taken to be 100, and the observation frequency $n$ is set to be 23,400 which corresponds to one observation per second on a regular trading day.

\subsection{Applications of Corollary \ref{cor:A_PAV} and Theorem \ref{thm:main}} \label{subsec:app_Cor_Thm2}
In this subsection, we illustrate how to estimate the ESD of ICV based on PAV matrix by using the two-step procedure that we introduced in Remark \ref{rmk:mainthm_usage}.

In the first step, we replace $F$  in equation \eqref{eqn:m_A} with the ESD of PAV and solve for $m_\cA(z)$. The window length $k$ in defining PAV is set to be $\lfloor 0.5\sqrt{n}\rfloor$. As to $m_\cA(z)$ to be solved, we choose a set of  $z$'s whose real and imaginary parts are equally spaced in the intervals $[-20,0]$ and $[1,20]$ respectively. Denote these $z$'s by $\{z_j\}_{j=1}^J$, and the estimated $m_\cA(z_j)$ by $\wh{m_\cA(z_j)}$.
We then need to estimate the ESD of $\cA_m$ based on $\{\wh{m_\cA(z_j)}\}_{j=1}^J$, which is done as follows.

Inspired by the nonparametric estimation method proposed in \cite{ElKaroui08}, we approximate $F^{\cA_m}$ with a weighted sum of point masses
\begin{equation}\label{eq:disc_cA}
F^{\cA_m} \approx \sum_{k=1}^K w_k \de_{x_k},
\end{equation}
where $\{x_1 < x_2< \ldots< x_K\}$ is a grid of points to be specified, and $w_k$'s are weights to be estimated.
To choose the grid $\{x_k\}_{k=1}^K$,
naturally we would like $[x_1, x_K]$ to cover the support of $F^{\cA_m}$ which, however, is unknown. To overcome such a difficulty, note that by the Mar\v{c}enko-Pastur theorem, the support of ESD of sample covariance matrix always covers that of the population covariance matrix, and since by Theorem \ref{thm:B_n}, our alternative estimator, $\cB_m$, satisfies the same Mar\v{c}enko-Pastur equation as the sample covariance matrix, $\cB_m$ inherits such a property with a support covering that of \ICV. (Such a feature can be clearly seen in the first plot in Figure \ref{fig:Thm3}.) Thanks to this property,  we choose $x_k$'s be equally spaced between 0 and the largest eigenvalue of $\cB_m$, and we are guaranteed that $[x_1, x_K]$ will cover the support of $F^{\cA_m}$.

Next we discuss how to estimate the weights $\{w_k\}$. Observe that the discretization  \eqref{eq:disc_cA}  gives an approximated Stieltjes transform of $F^{\cA_m}$ as $\sum_{k=1}^K \dfrac{w_k}{x_k-z}.$
Let
\[
e_j^{'}:=\wh{m_\cA(z_j)} -  \sum_{k=1}^K \dfrac{w_k}{x_k-z_j}, \q j=1,\cdots, J
\]
be  the approximation errors.
The weights $\{w_k\}_{k=1}^K$ are then estimated by minimizing the approximation errors:
\begin{equation}\label{eq:est_wt_A}
\argmin\limits_{(w_1,\ldots,w_k)}\max\limits_{j=1,2,\cdots,J} \max \{|\Re(e_j)|, |\Im(e_j)|\}\quad \text{subject}\ \text{to} \quad \sum_{k=1}^K w_k =1 \mbox{ and } w_k\geq 0.
\end{equation}

We move on the estimation of the ESD of ICV by using Theorem \ref{thm:main}. We first need to estimate two unknowns, $M(z)$
and $\wt{m}(z)$, which we do as follows. First note that multiplying $\wt{m}(z)$ and $M(z)$ on both sides of the first
and second equations in \eqref{Mm_z} respectively yields
\[
M(z)\cdot \wt{m}(z) = -\dfrac{1}{yz} + \dfrac{1}{yz} \int_0^1 \dfrac{1}{1+y\wt{m}(z)(1/3)(\ga_s^*)^2} ~ ds,
\]
and
\[
M(z)\cdot \wt{m}(z) =  -\dfrac{1}{z} \int \dfrac{\tau M(z)}{\tau M(z)+\zeta} ~ dH(\tau) = -\dfrac1z -m_\cA(z),
\]
where the last step is due to equation \eqref{mz_mainresult}. It follows that
\begin{equation}\label{Mm_1_2}
-\dfrac1z -m_{\cA}(z) = -\dfrac{1}{yz} +\dfrac{1}{yz} \int_0^1 \dfrac{1}{1+y\wt{m}(z)(1/3)(\ga_s^*)^2}~ ds,
\end{equation}
and $\wt{m}(z)=-(1/z+m_\cA(z))/M(z).$
Substituting the last expression of $\wt{m}(z)$ into equation \eqref{Mm_1_2} yields
\begin{eqnarray}\label{eqn:sol_Mz}
\int_0^1\dfrac{M(z)}{M(z)-(1/3)(\ga_s^*)^2 y(z^{-1}+m_\cA(z))} ~ ds =1-y-yzm_\cA(z).
\end{eqnarray}
By plugging the $\{\wh{m_\cA(z_j)}\}_{j=1}^J$ obtained in the first step and solving equation \eqref{eqn:sol_Mz} we get $\{\wh{M(z_j)}\}_{j=1}^J$.

We are now ready to estimate the ESD of \ICV. Similarly as in the first step, discretize $F^{\ICV}$ as
\begin{eqnarray}\label{dH_ICV}
F^{\ICV} \approx \sum_{k=1}^K c_k \de_{x_k},
\end{eqnarray}
where $c_k$'s are weights to be estimated. By equation \eqref{mz_mainresult} we expect that
\[
e_j^{''}:= \wh{m_\cA(z_j)} + \dfrac{1}{z_j} \cdot \sum_{k=1}^K ~ c_k~ \dfrac{\zeta}{x_k \wh{M(z_j)} +\zeta}
\]
to be small. The $c_k$'s are then estimated by minimizing the approximation errors $e_j^{''}$ just as in \eqref{eq:est_wt_A}.

Figure \ref{fig:Thm1-2} below illustrates the estimation procedure. The left plot shows three ESDs, those of \ICV, $\cA_m$ and PAV respectively. The three curves are clearly different from each. Keep in mind that we only observe that of PAV, whereas the ESDs of both \ICV\ and $\cA_m$ are underlying and need to estimated. The estimation of the ESD of $\cA_m$ is conducted in the first step, and the result is shown in the middle plot. The second step estimates the ESD of \ICV, given in the right plot. We see in both plots that the estimated ESDs roughly match with their respective targets, showing that our proposed two-step procedure indeed works in practice.

\begin{figure}[H]
\begin{center}
\includegraphics[width=13cm]{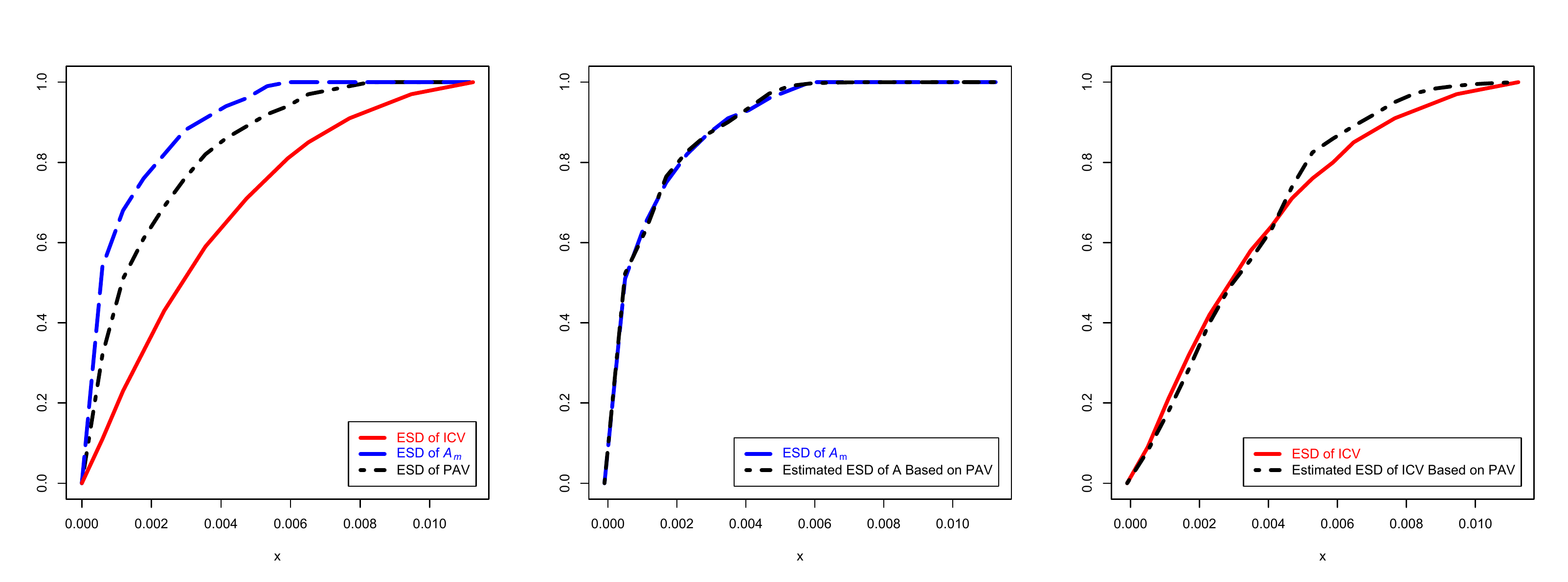}
\end{center}
\caption{Estimation of the ESDs of $\cA_m$ and \ICV\ based on  Corollary \ref{cor:A_PAV} and Theorem \ref{thm:main}.}
\label{fig:Thm1-2}
\end{figure}

\subsection{Application of Theorem \ref{thm:B_n}} \label{subsec:simulation_thm3}

In this subsection we illustrate how to use our alternative estimator $\cB_m$ and Theorem \ref{thm:B_n} to estimate the ESD of \ICV. According to Theorem \ref{thm:B_n}, asymptotically, the ESD of  $\cB_m$ is related to that of ICV through the standard Mar\v{c}enko-Pastur equation. Several algorithms have been developed to estimate the spectra of the population covariance matrices by inverting the Mar\v{c}enko-Pastur equation, and in the below we adopt the algorithm proposed in \cite{ElKaroui08}.
Set the window length $k$ in defining $\cB_m$ to be $\lfloor1.5n^{0.6}\rfloor$. Discretize the ESD of ICV as \eqref{dH_ICV}.
According to Theorem \ref{thm:B_n}, the Stieltjes transform of the ESD of $\cB_m$, denoted by $m_{\cB_m}(z)$, should approximately satisfy equation \eqref{eqn:mp} with $H$ replaced with the ESD of ICV. In other words, we again expect the approximation errors
\[
e_j^{'''}:= m_{\cB_m}(z_j) -\sum_{k=1}^K \dfrac{c_k}{x_k(1-y(1+z_jm_{\cB_m}(z_j)))-z_j}
\]
to be small. So again, we estimate the weights $c_k$'s by minimizing the approximation errors $e_j^{'''}$ as in \eqref{eq:est_wt_A}.

The estimation results are given in Figure \ref{fig:Thm3}. Observe that the left plot shows clearly that the ESD of $\cB_m$ differs from the (latent unobserved) ESD of  \ICV, yet the right plot shows that we can estimate this latent distribution.

\begin{figure}[H]
\begin{center}
\includegraphics[width=13cm]{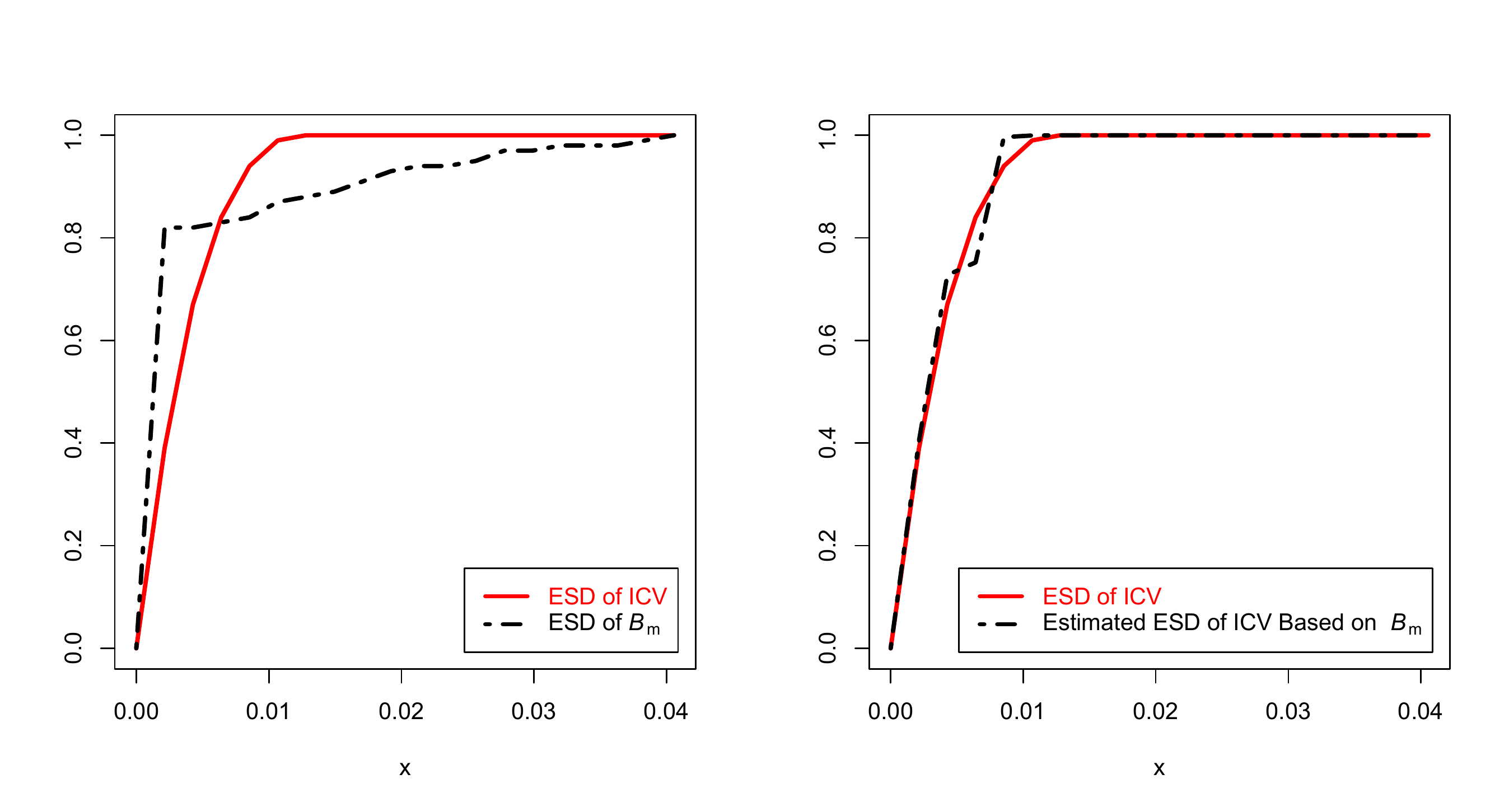}
\end{center}
\caption{Estimation of the ESDs of \ICV\ based on  Theorem \ref{thm:B_n}.}
\label{fig:Thm3}
\end{figure}

\section{Proofs}\label{sec:proofs}

\subsection{Proof of Theorem \ref{thm:LSD_signal_noise}}\label{sec:pf_LSD_signal_noise}
Theorem \ref{thm:LSD_signal_noise} is a consequence of the following proposition.

\begin{prop}\label{prop:LSD_signal_noise}
Under the assumptions of Theorem \ref{thm:LSD_signal_noise}, there exists a constant $K^*>0$ such that almost surely, for all
\[
z\in\mathbb{C}^{*}:=\left\{z\in\mathbb{C}^+: \Im(z)> K^*\right\},
\]
we have
\begin{equation}\label{eqn:prop_LSD_signal_noise}
\lim_{p\to\infty}\left[
\dfrac{1}{p}{\rm tr}\left(\dfrac{1}{1+\delta_n}\cA_n-z\I\right)^{-1}
-\dfrac{1}{p}{\rm tr}\bigg({\bf S}_n-(z-t_n\sigma_n^2)\I\bigg)^{-1}
\right]=0,
\end{equation}
where for all $p$ large enough,  $t_n$ is the unique solution to the equation
\begin{equation}\label{eqn:t_n}
t_n \ = \ y_n-1+y_n(z-t_n\si_n^2) \ \dfrac{1}{p}{\rm tr}\(\S_n-(z-t_n\si_n^2)\I \)^{-1},
\end{equation}
in the set
\begin{equation}\label{dfn:D}
\sD:=\left\{t\in\bC: 0\le\Im(t)\le \dfrac{\Im(z)}{2(\si+1)^2}\right\},
\end{equation}
 and
\begin{equation}
\de_n \ = \ y_n\si_n^2 \ \dfrac{1}{p}{\rm tr}\(\S_n-(z-t_n\si_n^2) \I \)^{-1}.
\label{delta}
\end{equation}
\end{prop}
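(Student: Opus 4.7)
My plan has three main components: establishing the fixed point equation for $t_n$, deriving the key resolvent identity via the standard Stieltjes transform machinery, and matching the algebra to the target formula.

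First, I would establish that for $\Im(z)$ sufficiently large, equation (\ref{eqn:t_n}) has a unique solution $t_n\in\sD$. The map $\Phi(t) := y_n - 1 + y_n(z - t\sigma_n^2)\cdot (1/p)\tr(\S_n - (z-t\sigma_n^2)\I)^{-1}$ sends $\sD$ to itself because $w\mapsto w\,m_{\S_n}(w)$ maps $\bC^+$ to $\bC^+$ with imaginary part bounded by $1 + |w|/\Im(w)$; the bound $\Im(t)\le\Im(z)/(2(\sigma+1)^2)$ together with the choice of $K^*$ ensures $\Im(w)=\Im(z-t\sigma_n^2)$ stays comparable to $\Im(z)$ so that the standard resolvent bound $\|(\S_n-w\I)^{-1}\|\le 1/\Im(w)$ yields contraction for the fixed point iteration. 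Uniqueness follows by differentiating $\Phi$ and bounding $|\Phi'(t)|<1$ for $\Im(z)>K^*$.

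Second, I would derive the resolvent identity via the ``leave-one-out'' approach. Write $\S_n = (1/n)\sum_{j=1}^n \b_j\b_j^T$ with $\b_j=\a_j+\sigma_n\e_j$ where $\a_j,\e_j$ are the $j$-th columns of $\A_n,\pvep_n$. Using $w=z-t_n\sigma_n^2$ and the identity $\I + w(\S_n-w\I)^{-1}=\S_n(\S_n-w\I)^{-1}$, take trace and apply Sherman--Morrison to each rank-one update:
\begin{equation*}
1 + w\,m_{\S_n}(w) \;=\; \frac{1}{p}\sum_{j=1}^n \frac{(1/n)\b_j^T(\S_{n,(j)}-w\I)^{-1}\b_j}{1+(1/n)\b_j^T(\S_{n,(j)}-w\I)^{-1}\b_j},
\end{equation*}
where $\S_{n,(j)}$ denotes $\S_n$ with the $j$-th rank-one contribution removed. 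Conditioning on $\A_n$, I would invoke a martingale-difference concentration bound (or a Hanson--Wright-type estimate using assumption \ref{asm:eps}) to show that the noise-noise quadratic forms $(1/n)\e_j^T(\S_{n,(j)}-w\I)^{-1}\e_j$ concentrate on $y_n m_{\S_n}(w)$, that the cross terms $(1/n)\a_j^T(\S_{n,(j)}-w\I)^{-1}\e_j$ vanish asymptotically, and, after consolidating the $\a_j\a_j^T$ contributions via the rank-one interlacing between $\S_{n,(j)}$ and $\S_n$, that the signal-signal quadratic forms aggregate into a trace involving $\cA_n$.

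Third, by collecting the concentrated expressions I would obtain (modulo $o_{\rm a.s.}(1)$ errors) an approximate equation of the form $1 + w\,m_{\S_n}(w) \approx 1 - (1/p)\tr\bigl(\I + (1+\delta_n)^{-1}\cA_n\cdot(\S_n-w\I)^{-1}\bigr)^{-1}$ which, after algebraic rearrangement using the definitions of $\delta_n$ and $t_n$ (the self-consistent choice of $t_n$ in (\ref{eqn:t_n}) is \emph{exactly} what makes the signal-free part of the denominator cancel the $(1-y_n)$ and linear-in-$z$ corrections, as can be verified by matching with the Dozier--Silverstein identity), reduces to the claimed approximation (\ref{eqn:prop_LSD_signal_noise}). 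Finally, upgrade the in-probability convergence to almost sure via Borel--Cantelli after truncation of the noise entries, exploiting the moment bounds in assumption \ref{asm:eps}.

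The main obstacle will be the concentration step: one must keep all denominators of Sherman--Morrison terms uniformly bounded away from zero on the relevant domain $\bC^*$, and the cross-term $\a_j^T R \e_j$ needs a subtle estimate because $\|\a_j\|$ is not assumed bounded (only the ESD of $\cA_n$ converges). The restriction to $\Im(z)>K^*$ is precisely what buys enough resolvent regularity to push these estimates through, and a further technical nuisance is that the fixed-point $t_n$ itself is random, so one must prove stability of the argument under small perturbations of $t_n$ to close the argument rigorously.
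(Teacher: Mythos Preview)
Your outline shares the right toolkit with the paper (fixed-point argument for $t_n$, leave-one-out/Sherman--Morrison, concentration of noise quadratic forms, Borel--Cantelli via fourth moments after truncation), but the organization of the resolvent comparison in your steps~2--3 differs from the paper's and contains a real gap.

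The paper does \emph{not} compute $1+w\,m_{\S_n}(w)$ and then try to algebraically match it to something involving $\cA_n$. Instead it attacks the difference in~(\ref{eqn:prop_LSD_signal_noise}) head-on: with $\R_n=\S_n-(z-t_n\sigma_n^2)\I$ and $\B_n=(1+\delta_n)^{-1}\cA_n-z\I$, it writes
\[
\Delta=\tfrac{1}{p}\tr\bigl(\B_n^{-1}-\R_n^{-1}\bigr)=\tfrac{1}{p}\tr\bigl(\B_n^{-1}(\R_n-\B_n)\R_n^{-1}\bigr),
\]
expands $\S_n-(1+\delta_n)^{-1}\cA_n$ into its signal--signal, cross, and noise--noise pieces column by column, and applies Sherman--Morrison \emph{only to the $\R_n^{-1}$ factor}. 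This produces three explicit scalar sums $\Delta_1,\Delta_2,\Delta_3$, each of which is shown to vanish via fourth-moment bounds. The point is that $\B_n^{-1}$ is inserted from the outset by the resolvent-difference identity, so no separate ``aggregation'' step is needed to manufacture it.

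Your step~3 is where the argument breaks down. After Sherman--Morrison, the signal--signal quadratic forms $(1/n)\,\a_j^T(\S_{n,(j)}-w\I)^{-1}\a_j$ do \emph{not} concentrate (the $\a_j$ are deterministic), and summing them over $j$ yields something like $\tr(\cA_n\R_n^{-1})$, not the inverse $\bigl(\I+(1+\delta_n)^{-1}\cA_n(\S_n-w\I)^{-1}\bigr)^{-1}$ that you write down. There is no clear route from the former to the latter without essentially redoing the paper's direct comparison. Relatedly, you miss the key algebraic identity the paper exploits: the self-consistent $t_n$ in~(\ref{eqn:t_n}) is shown to coincide with $-(1/n)\sum_j\beta_j$ where $\beta_j=(1+\pmb\xi_j^T\R_{nj}^{-1}\pmb\xi_j)^{-1}$. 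This equivalence is precisely what makes the noise--noise contribution $\Delta_3$ cancel against the $t_n\sigma_n^2$ shift, and what makes the signal--signal term $\Delta_1$ come paired with the small factor $(\delta_n-w_j)$ rather than standing alone. Without this device, your ``cancellation of the $(1-y_n)$ and linear-in-$z$ corrections'' remains heuristic.

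In short: replace your indirect route through $1+w\,m_{\S_n}(w)$ by the direct resolvent-difference expansion $\B_n^{-1}-\R_n^{-1}=\B_n^{-1}(\R_n-\B_n)\R_n^{-1}$, and establish the alternative characterization $t_n=-(1/n)\sum_j\beta_j$ before decomposing. Your remarks on the fixed point, on truncation to control $\|\a_j\|$ via $\|\cA_n\|\le\log n$, and on Borel--Cantelli are all correct and match the paper.
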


The proof of Proposition \ref{prop:LSD_signal_noise} is given in Section \ref{ssec:pf_prop_LSD_signal_noise} after some preparation works have been  done in Sections \ref{ssec:t_n} and \ref{ssec:prelim}. In Section \ref{ssec:thm_LSD_signal_noise} we show how to establish Theorem \ref{thm:LSD_signal_noise} based on Proposition \ref{prop:LSD_signal_noise}.

To prove Proposition \ref{prop:LSD_signal_noise}, we shall use the following results from  \cite{DS2007a}.
By Theorem 1.1 therein, the sequence $\{F^{\S_n}\}$ converges weakly to a probability distribution  $F$.  Moreover, by using the same truncation and centralization technique as in \cite{DS2007a}, we may assume that
\begin{compactenum}\setcounter{enumi}{5}
 \item[]
 \begin{compactenum}
  \item\label{asm:eps_bd} $|\ep_{11}|\le a\log(n)$ for some $a>2$,
  \item\label{asm:eps_mean_var} $E\ep_{11}=0$, $E|\ep_{11}|^2=1$, \q and
  \item\label{asm:A_bd}  $\|(1/n)\A_n\A_n^T\|\le \log(n)$.\\
\end{compactenum}
\end{compactenum}

In addition to equation \eqref{eqn:t_n}, we shall also study its limiting equation
\begin{equation}\label{eqn:t}
t= \ y-1+y(z-t\sigma^2)m(z-t\sigma^2),
\end{equation}
where $m(\cdot)$ is  the Stieltjes transform of  the probability distribution  $F$.

It is shown in \cite{DS2007b} that the distribution $F$ admits a continuous density on $\zz{R}\setminus\{0\}$.  Since we assume that
$F$ is supported by a finite interval $[a,b]$ with $a>0$ and possibly has a point mass at 0, we conclude that $F$ admits a bounded density~$f$ supported by a finite interval $[a,b]$ and possibly a point mass at zero.

\subsubsection{Properties of $t_n$ and $t$}\label{ssec:t_n}

\begin{lem}\label{lem:tn_exist}
There exists a constant $K_1>0$ such that for all
$z\in\bC_1:=\left\{z=u+iv:  v> K_1 \right\}$,
for all $n$ large enough, equation (\ref{eqn:t_n}) admits a unique solution in $\sD$.
\end{lem}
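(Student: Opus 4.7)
Plan: recast equation \eqref{eqn:t_n} as a fixed-point equation $t = \Phi_n(t)$ for
\[
\Phi_n(t) := y_n - 1 + y_n(z - t\sigma_n^2)\,m_{\S_n}(z - t\sigma_n^2),
\]
and apply Banach's fixed-point theorem on a suitable closed subset of $\sD$ once $\Im(z) > K_1$ is large enough. The first step is to verify well-definedness of $\Phi_n$ on $\sD$: for $t \in \sD$, setting $w := z - t\sigma_n^2$ and using $\sigma_n \le \sigma+1$ for $n$ large,
\[
\Im(w) = \Im(z) - \sigma_n^2\,\Im(t) \ge \Im(z) - (\sigma+1)^2 \cdot \frac{\Im(z)}{2(\sigma+1)^2} = \frac{\Im(z)}{2},
\]
so $w\in\bC^+$ and $\Phi_n$ is analytic on $\sD$.

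For the invariance $\Phi_n(\sD)\subset\sD$, nonnegativity $\Im(\Phi_n(t))\ge 0$ follows from $\S_n$ being nonnegative definite via the Stieltjes-transform identity
\[
\Im(wm_{\S_n}(w)) = \Im(w)\int\frac{x}{|x-w|^2}dF^{\S_n}(x)\ge 0.
\]
The upper bound $\Im(\Phi_n(t))\le \Im(z)/(2(\sigma+1)^2)$ would be extracted from $wm_{\S_n}(w)+1 = \int x/(x-w)\,dF^{\S_n}(x)$, whose modulus decays in $\Im(w)$. For uniqueness (and simultaneously existence), I would verify $|\Phi_n'(t)| < 1$ strictly, using
\[
\Phi_n'(t) = -y_n\sigma_n^2\int\frac{x}{(x-w)^2}dF^{\S_n}(x) = -y_n\sigma_n^2\,\frac{d}{dw}\bigl[wm_{\S_n}(w)\bigr],
\]
whose modulus again shrinks as $\Im(z)$ grows; Banach's theorem then delivers the unique fixed point $t_n$.

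The hard part will be making these estimates uniform in $n$, so that $K_1$ is a single constant independent of $n$. Under the Dozier--Silverstein truncation recalled in the paper only $\|\S_n\|\le\log n$ is available, so the natural bounds such as $\int x/|x-w|\,dF^{\S_n}(x)\le M_1^{(n)}/\Im(w)$ in terms of moments of $F^{\S_n}$ are not uniform in $n$. I would overcome this by exploiting the standing hypothesis that the LSD $F$ is supported on $[a,b]\cup\{0\}$: split each integral against $F^{\S_n}$ into a bulk piece, controlled by $F^{\S_n}([0,b+\varepsilon])\le 1$, and an outlier piece, whose $F^{\S_n}$-measure tends to $0$ almost surely by weak convergence and whose individual eigenvalues are $O(\log n)$, producing a net vanishing contribution. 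A restriction of Banach's argument to a bounded neighborhood of the natural leading-order solution $t\approx -1$ may also be needed, since $\sD$ is unbounded in the real direction.
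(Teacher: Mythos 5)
Your overall strategy --- recast \eqref{eqn:t_n} as a fixed-point equation $t=\Phi_n(t)$, verify well-definedness on $\sD$ via $\Im(z-t\sigma_n^2)\ge \Im(z)/2$, then get invariance and a contraction --- is exactly the paper's. However, your treatment of the uniformity-in-$n$ issue has a genuine gap, and that is precisely the one step where a real idea is needed. You correctly observe that the truncation assumption \eqref{asm:A_bd} only controls $\|\cA_n\|$ by $\log n$, so the support of $F^{\S_n}$ could a priori grow. But your proposed fix --- splitting each integral into a bulk piece on $[0,b+\varepsilon]$ plus an outlier piece with $F^{\S_n}$-measure tending to $0$ --- does not close, because the outlier eigenvalues can be as large as $O(\log n)$ while weak convergence gives no rate on how fast $F^{\S_n}((b+\varepsilon,\infty))\to 0$; the product $\log n\cdot F^{\S_n}((b+\varepsilon,\infty))$ could in principle diverge. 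The paper instead invokes Theorem~1.1 of \cite{Bai2012_noise} (no eigenvalues outside the support for information-plus-noise matrices), which yields the much stronger conclusion that almost surely, for all $n$ large, the \emph{entire} support of $F^{\S_n}$ lies inside $[0,b+1]$. With $b_n\le b+1$ one then gets uniform bounds such as $|\Im(\Phi_n(t))|\le \tilde y \tilde b /(v-t_2\tilde\sigma^2)$ and a Cauchy--Schwarz bound on the Lipschitz constant, both $<1$ once $v>K_1:=2\tilde\sigma\sqrt{\tilde y\tilde b}$, which is a deterministic $K_1$.

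A secondary point: your worry that ``$\sD$ is unbounded in the real direction'' so that Banach might need a bounded neighborhood is unfounded. $\sD$ is a closed horizontal strip, hence a complete metric space, and the Banach fixed-point theorem applies directly; no restriction to a bounded set is required, and the paper uses none.
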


\begin{lem}\label{lem:t_properties}
Suppose that $t$ solves equation \eqref{eqn:t} for $z\in\bC^+$. Write $t=t_1+it_2$ and $z=u+iv$.
Then $0<t_2<v/\si^2$; moreover,
as $v\to\infty$, uniformly in $u$, one has $t_2\to 0$ and $t_1\to -1$.
\end{lem}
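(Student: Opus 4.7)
The plan is to analyze equation \eqref{eqn:t} through its imaginary part, exploiting the integral representation of the Stieltjes transform $m(\cdot)$ of $F$ together with the fact (established earlier in the excerpt via \cite{DS2007b}) that $F$ has bounded support and a bounded density on $[a,b]$.

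First, set $w := z - t\si^2$, so that $\Im(w) = v - t_2\si^2$. The upper bound $t_2 < v/\si^2$ is automatic, as $m(w)$ is defined as a Stieltjes transform only when $w\in\bC^+$, i.e.\ when $\Im(w)>0$. Using the elementary identity $wm(w) = -1 + \int \lambda/(\lambda-w)\,dF(\lambda)$, equation \eqref{eqn:t} rewrites as $t+1 = y\int \lambda/(\lambda-w)\,dF(\lambda)$, and taking imaginary parts gives
\[
t_2 \;=\; y\,\Im(w)\int \frac{\lambda}{|\lambda - w|^2}\,dF(\lambda),
\]
which is strictly positive because $F$ places nontrivial mass on $(0,\infty)$ and $\Im(w)>0$. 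This yields $0<t_2<v/\si^2$.

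Using the crude estimate $|\lambda - w|\ge \Im(w)$ in the same identity produces the key inequality
\[
t_2\,(v - t_2\si^2) \;\le\; y\,\bar\lambda, \qquad \bar\lambda := \int \lambda\,dF(\lambda) < \infty,
\]
which forces $t_2$ to lie either close to $0$ or close to $v/\si^2$. To rule out the latter regime I argue by contradiction: if along some $v_n\to\infty$ and $u_n\in\bR$ one had $t_2^{(n)}\ge c>0$, then $s_n := v_n - t_2^{(n)}\si^2$ would be bounded by $y\bar\lambda/c$, while $t_2^{(n)}=(v_n-s_n)/\si^2\to\infty$. I then estimate $t_2^{(n)} = y s_n \int \lambda/|\lambda-w_n|^2\,dF$ according to the behavior of $\Re(w_n)=u_n - t_1^{(n)}\si^2$. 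If $\Re(w_n)$ stays bounded away from $[a,b]$, the integral is uniformly bounded and $t_2^{(n)}=O(s_n)$ is bounded. Otherwise I pass to a subsequence with $\Re(w_n)\to r^*\in[a,b]$ and $s_n\to s^*\ge 0$: the case $s^*>0$ immediately bounds the integral, while the delicate case $s^*=0$ is handled by the Plemelj--Sokhotski / Poisson-kernel asymptotic
\[
s_n\int \frac{\lambda}{|\lambda - w_n|^2}\,dF(\lambda)\;\longrightarrow\;\pi\,r^*\,f(r^*),
\]
which is finite because $F$ has a bounded density $f$. In every case $t_2^{(n)}$ remains bounded, contradicting $t_2^{(n)}\to\infty$, so $t_2\to 0$ uniformly in $u$.

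Once $t_2\to 0$ we have $\Im(w)\to\infty$, hence $|w|\to\infty$ uniformly in $u$, and the standard expansion $wm(w)=-1+O(1/|w|)$ (valid for $|w|\to\infty$ with $\Im(w)>0$) yields $t = y-1+y\,wm(w)\to -1$, so $t_1\to -1$. Uniformity in $u$ is automatic because all the constants appearing in the bounds ($\bar\lambda$, $\|f\|_\infty$, $b$, $\si$, $y$) are $u$-free. The main technical hurdle is the case analysis that rules out the regime $t_2$ near $v/\si^2$; this is where the fine property that $F$ has a bounded density—ensuring a finite Plemelj--Sokhotski limit—is indispensable.
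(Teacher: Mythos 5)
Your proof is correct and rests on the same key ideas as the paper's: the imaginary-part identity $t_2 = y\,\Im(w)\int \lambda/|\lambda-w|^2\,dF(\lambda)$ (with $w=z-t\si^2$), the resulting quadratic dichotomy for $t_2$, and the boundedness of the density of $F$ to eliminate the large branch. The paper handles the last point more directly --- the change of variables $l=(x-\Re(w))/\Im(w)$ gives the uniform bound $t_2 \le y b\|f\|_\infty\pi$ for all $z$, so the branch $t_2\geq(v+\sqrt{v^2-4\si^2 yb})/(2\si^2)$, which grows like $v/\si^2$, is immediately impossible once $v$ is large --- whereas your contradiction/subsequence argument through the Poisson-kernel asymptotic $s_n\int \lambda f(\lambda)/|\lambda-w_n|^2\,d\lambda \to \pi r^* f(r^*)$ is the very same computation cast as a limit, requiring a few extra moving parts (tightness of $s_n$, case analysis on $\Re(w_n)$) but reaching the identical conclusion.
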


\begin{lem}\label{lem:t_exist}
There exists a constant $K_2\geq K_1$ such that for any
$z\in\bC_2:=\left\{z=u+iv: v>K_2\right\}$, equation \eqref{eqn:t} admits a unique solution.
\end{lem}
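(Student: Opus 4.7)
The plan is to apply the Banach fixed-point theorem to the map $\Phi(t) := y-1 + y(z - t\si^2)m(z - t\si^2)$ on the closed strip $\sD$, showing both $\Phi(\sD) \subseteq \sD$ and strict contraction for all $z$ with $\Im(z)$ above some threshold $K_2 \geq K_1$. A crucial preliminary step is the algebraic reformulation via $wm(w) = -1 + \int \lambda/(\lambda - w)\, dF(\lambda)$, which recasts $\Phi$ as
\[
\Phi(t) = -1 + y\int \frac{\lambda}{\lambda - (z - t\si^2)}\, dF(\lambda).
\]
Because $F$ is supported on $[a,b]$ with $a>0$ (plus possibly a point mass at $0$), the integrand involves only $\lambda \geq 0$, which makes both the imaginary-part bounds and the limiting behavior $\Phi(t) \to -1$ as $\Im(z)\to\infty$ transparent; the latter is consistent with Lemma \ref{lem:t_properties}.

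I would first verify the self-map property. For $t \in \sD$, writing $w := z - t\si^2$ and using $\Im(t) \le v/(2(\si+1)^2)$ together with $\si^2 \le (\si+1)^2$, one gets $\Im(w) = v - \si^2 \Im(t) \geq v/2$. Then $\Im(\Phi(t)) = y\int \lambda\, \Im(w)/|\lambda-w|^2\, dF(\lambda) \ge 0$ since $\lambda \geq 0$, while the bound $|\lambda - w| \ge \Im(w)$ yields $\Im(\Phi(t)) \le yb/\Im(w) \le 2yb/v$; this is at most $v/(2(\si+1)^2)$ once $v$ exceeds a threshold depending only on $y,b,\si$.

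For the contraction estimate, I would differentiate under the integral (justified by the compact support of $F$ and the uniform lower bound $|\lambda-w| \geq \Im(w) \geq v/2$) to obtain
\[
\Phi'(t) = -y\si^2 \int \frac{\lambda}{(\lambda-w)^2}\, dF(\lambda),
\]
so $|\Phi'(t)| \le y\si^2 b/\Im(w)^2 \le 4y\si^2 b/v^2$. Taking $K_2$ to be the maximum of $K_1$ and the thresholds required above, $\Phi$ becomes a strict contraction on the complete metric space $\sD$ for every $z$ with $\Im(z) > K_2$, and Banach's fixed-point theorem delivers the unique solution of \eqref{eqn:t} in $\sD$.

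The main obstacle will be the self-map verification, in particular the upper bound on $\Im(\Phi(t))$, since the contraction estimate and the existence/uniqueness conclusion follow routinely once the geometry of the strip $\sD$ is pinned down. A minor bookkeeping point: Lemma \ref{lem:t_properties} shows that any solution $t \in \bC^+$ of \eqref{eqn:t} satisfies $\Im(t) \to 0$ as $\Im(z)\to\infty$, and hence lies in $\sD$ once $v$ is large, so uniqueness within $\sD$ lifts to uniqueness in all of $\bC^+$ if that stronger statement is needed.
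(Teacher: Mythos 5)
Your argument is correct and follows essentially the same route as the paper: a Banach fixed-point argument on the strip $\sD$ (which the paper obtains by invoking the same proof as Lemma~\ref{lem:tn_exist}, with your derivative-based contraction bound equivalent to its Cauchy--Schwarz estimate since $\sD$ is convex and $\Phi$ is holomorphic near $\sD$), followed by the observation from Lemma~\ref{lem:t_properties} that any solution in $\bC^+$ must lie in $\sD$ once $\Im(z)$ is large. One caveat on framing: the lifting to global uniqueness is not an optional ``stronger statement''---the lemma asserts a unique solution with no restriction to $\sD$, so that step is required, and the paper makes it explicit by choosing $K_2$ so that every solution satisfies $t_2\le v/(2\wt{\si}^2)$ and hence lies in $\sD$.
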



\begin{lem}\label{lem:t_analytic}
There exists a constant $K_3\geq K_2$ such that the solution $t=t(z)$ to \eqref{eqn:t} is analytic on $\bC_3:=\left\{z=u+iv: v>K_3\right\}$.
\end{lem}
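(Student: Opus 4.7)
The plan is to invoke the analytic implicit function theorem applied to
\[
G(z, t) := t - (y-1) - y(z - t\sigma^2)\, m(z - t\sigma^2),
\]
whose zeros in the $t$-variable are exactly the solutions of \eqref{eqn:t}. Since $m$ is the Stieltjes transform of $F$, which is supported in $\{0\}\cup[a,b]$ with $a>0$, the map $w\mapsto m(w)$ is analytic on $\bC\setminus\bR$. For $v>K_2$ and $t=t(z)$, Lemma \ref{lem:t_properties} gives $\Im(z - t\sigma^2) = v - t_2\sigma^2 > 0$, so $z - t\sigma^2$ stays in $\bC^+$, and $G$ is jointly analytic in $(z,t)$ in a complex neighborhood of the solution curve.

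The crucial step is to verify that $\partial_t G(z, t(z))$ is bounded away from zero for $v$ sufficiently large. Direct differentiation yields
\[
\partial_t G(z,t) \;=\; 1 + y\sigma^2 \bigl( m(w) + w\, m'(w) \bigr), \qquad w := z - t\sigma^2.
\]
By Lemma \ref{lem:t_properties}, $t_2\to 0$ as $v\to\infty$, so $\Im(w) = v - t_2\sigma^2 \to\infty$ as well. The elementary bound $|m(w)|\le 1/\Im(w)$ handles the first summand, and since $\mathrm{supp}(F)$ is bounded, once $|w|$ exceeds twice the diameter of the support, one has $|w\, m'(w)| \le \int |w|/|\tau - w|^2 \,dF(\tau) = O(1/|w|)$; both terms therefore tend to $0$, and hence $\partial_t G(z,t(z))\to 1$. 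Consequently there exists $K_3\ge K_2$ with $|\partial_t G(z,t(z))|\ge 1/2$ throughout $\bC_3$.

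With this nondegeneracy in hand, the analytic implicit function theorem produces, around each $z_0\in\bC_3$, an analytic function $\tilde t$ defined on a disk about $z_0$ with $\tilde t(z_0) = t(z_0)$ and $G(z,\tilde t(z))\equiv 0$. By the uniqueness in Lemma \ref{lem:t_exist}, $\tilde t(z)$ must coincide with $t(z)$ throughout this disk, so $t$ is analytic at $z_0$; since $z_0\in\bC_3$ was arbitrary, $t$ is analytic on all of $\bC_3$. The main obstacle is the non-vanishing of $\partial_t G$ along the solution curve; once the smallness of $m(w)$ and $w m'(w)$ for large $\Im(w)$ is verified via the bounded support of $F$ and Lemma \ref{lem:t_properties}, the remainder of the argument is a routine application of the holomorphic implicit function theorem.
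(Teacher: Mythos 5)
Your proposal is correct and takes essentially the same approach as the paper: both apply the holomorphic implicit function theorem to the same $G(z,t)$ and show $\partial_t G(z,t(z))\to 1$ as $v\to\infty$, uniformly in $u$. The only cosmetic difference is that you bound $m(w)$ and $wm'(w)$ separately, whereas the paper combines them as $\int x/(x-w)^2\,dF(x)$ and bounds it by $b/(v-t_2\sigma^2)^2$; you also make explicit the appeal to uniqueness (Lemma \ref{lem:t_exist}) to identify the local implicit solution with $t(z)$, a step the paper leaves implicit.
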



\begin{lem}\label{lem:tlimit}
Suppose that $t_n$ solves equation \eqref{eqn:t_n} for $z\in\bC_2$, then $\Im(t_n)>0$ and $\Im(z-t_n\si_n^2)>0$; moreover if $t_n$ is the unique solution in the  set $\sD$, then  with probability one, as $n\to\infty$, $t_n$ converges to a nonrandom complex number $t$ which uniquely solves equation \eqref{eqn:t}.
\end{lem}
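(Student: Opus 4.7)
The plan is to establish the two claims separately, using the containment $t_n\in\sD$ to control $z-t_n\si_n^2$, and then passing to the limit in \eqref{eqn:t_n} via locally uniform convergence of Stieltjes transforms on compact subsets of $\bC^+$.

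For the strict positivity, first note that $t_n\in\sD$ gives $0\le\Im(t_n)\le \Im(z)/(2(\si+1)^2)$; combined with $\si_n\to\si$, this yields, for $n$ large,
\[
\Im(z-t_n\si_n^2) \ \ge \ \Im(z)-\si_n^2\cdot\frac{\Im(z)}{2(\si+1)^2} \ \ge \ \frac{\Im(z)}{2} \ > \ 0,
\]
so $w_n:=z-t_n\si_n^2\in\bC^+$. Taking the imaginary part of \eqref{eqn:t_n} and using the spectral decomposition of $\S_n$,
\[
\Im(t_n) \ = \ y_n\,\Im(w_n m_{\S_n}(w_n)) \ = \ \frac{y_n}{p}\sum_j \frac{\Im(w_n)\,\lambda_j(\S_n)}{|\lambda_j(\S_n)-w_n|^2} \ > \ 0,
\]
the strict positivity holding as $\S_n$ is almost surely not the zero matrix under the noise assumption.

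For the convergence, I would argue by compactness. Since $\sD$ is compact and $t_n\in\sD$, any subsequence of $\{t_n\}$ admits a further convergent subsequence $t_{n_k}\to t^*\in\sD$ on the probability-one event where $F^{\S_n}$ converges weakly to $F$ (Theorem 1.1 of \cite{DS2007a}). Writing $w_{n_k}:=z-t_{n_k}\si_{n_k}^2\to w^*:=z-t^*\si^2$ (with $\Im(w^*)\ge\Im(z)/2$), I would then establish
\[
m_{\S_{n_k}}(w_{n_k}) \ \longrightarrow \ m(w^*),
\]
by combining the weak convergence of $F^{\S_n}$ with the equicontinuity bound $|m_{\S_n}'(w)|\le\Im(w)^{-2}$, which upgrades pointwise convergence of Stieltjes transforms to locally uniform convergence on compact subsets of $\bC^+$ (in particular on $\{w\in\bC:\Im(w)\ge \Im(z)/2,\ |w|\le R\}$ for some large $R$). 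Combined with $y_n\to y$ and $\si_n\to\si$, passing to the limit in \eqref{eqn:t_n} then shows that $t^*$ solves \eqref{eqn:t}; by Lemma \ref{lem:t_exist}, such a solution is unique, so all subsequential limits coincide with $t$ and $t_n\to t$ almost surely.

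The main obstacle I anticipate is precisely the joint convergence $m_{\S_{n_k}}(w_{n_k})\to m(w^*)$: because $w_{n_k}$ is a random argument driven by $t_{n_k}$, pointwise convergence of $m_{\S_n}$ at a single deterministic point does not suffice. The derivative bound that gives equicontinuity of the family $\{m_{\S_n}\}$ is what makes the argument go through, provided one works on a compact subset of $\bC^+$ kept strictly away from the real axis---which is ensured by the lower bound $\Im(w_n)\ge \Im(z)/2$ established in the first step.
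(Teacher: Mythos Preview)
Your argument has a genuine gap at the compactness step. The set $\sD=\{t\in\bC:\ 0\le\Im(t)\le \Im(z)/(2(\si+1)^2)\}$ is an unbounded horizontal strip, not a compact set, so membership $t_n\in\sD$ alone does not give a convergent subsequence. The paper handles precisely this point: it shows that $\{t_n\}$ is tight by first noting that $0<\Im(t_n)<v/\si_n^2$ controls the imaginary part, and then proving boundedness of $\{\Re(t_n)\}$ by a contradiction argument. Specifically, rewriting \eqref{eqn:t_n} via the companion transform $\ul m_n$ as $t_n=(z-t_n\si_n^2)\,\ul m_n(z-t_n\si_n^2)$, taking real parts, and solving for $\Re(t_n)$ gives an explicit expression whose right-hand side stays bounded (in fact tends to $1$) along any subsequence with $|\Re(t_{n_k})|\to\infty$, which is impossible. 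Without some argument of this kind your subsequence extraction fails.

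A secondary point: the first assertion of the lemma---that $\Im(t_n)>0$ and $\Im(z-t_n\si_n^2)>0$---is stated for \emph{any} solution of \eqref{eqn:t_n}, not only for the one lying in $\sD$, so you should not invoke $t_n\in\sD$ to obtain it. The paper instead takes the imaginary part of \eqref{eqn:t_n} directly (as in the proof of Lemma~\ref{lem:t_properties}) and observes that the resulting identity forces both signs simultaneously. Once tightness is in place, your equicontinuity argument for $m_{\S_{n_k}}(w_{n_k})\to m(w^*)$ is fine and is essentially equivalent to the paper's appeal to dominated convergence on the strip $\{\Im(w)\ge v/2\}$; the uniqueness from Lemma~\ref{lem:t_exist} then finishes the proof as you describe.
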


The proofs of Lemmas   \ref{lem:tn_exist} - \ref{lem:tlimit} 
are given in the Appendix \ref{appendix:pfs}. 


\subsubsection{Some further preliminary results}\label{ssec:prelim}
Let $K^*=\max\{K_1,K_2,K_3\}$~$(~=K_3)$ for  $K_1$, $K_2$ and $K_3$ defined in Lemmas \ref{lem:tn_exist}, \ref{lem:t_exist} and \ref{lem:t_analytic}, respectively. And define
$\bC^*=\{z\in\bC^+: \Im(z)>K^*\}$.
Below we work with $z\in\bC^*$.

Let ${\bf a}_j$ and $\pep_j$, $j=1,\ldots,n$, be the $j$th column of ${\bf A}_n$ and  $\pvep_n$, and let ${\bf b}_j=\sigma_n \pep_j$.
Denote ${\pmb {\pmb\xi}}_j=\dfrac{1}{\sqrt{n}}({\bf a}_j+{\bf b}_j)$ so that ${\bf S}_n=\sum_{j=1}^n{\pmb {\pmb\xi}}_j{\pmb {\pmb\xi}}_j^T$.
For any complex number $t_n$ such that $\Im(z-t_n\si_n^2)> 0$,  define
\begin{eqnarray}
&&~~~{\bf R}_n  =  {\bf S}_n-(z-t_n\sigma_n^2)  \I ,\q\q\q\q \delta_n =\dfrac{\sigma_n^2}{n}{\rm tr}({\bf R}_n^{-1})
=y_n\si_n^2\dfrac{1}{p}{\rm tr}(\R_n^{-1}), \nonumber\\
&&~~~{\bf S}_{nj}  =  {\bf S}_n-{\pmb \xi}_j{\pmb \xi}_j^T \ = \ \sum_{k\neq j}{\pmb\xi}_k{\pmb\xi}_k^T , ~~\q
{\bf R}_{nj}  =  {\bf S}_{nj}-(z-t_n\sigma_n^2)  \I, \label{betaj}\\
&&~~~{\bf B}_n  =  \dfrac{1}{1+\delta_n}\dfrac{1}{n}{\bf A}_n{\bf A}_n^T-z  \I, \mbox{ and }
~ \beta_j  =  \dfrac{1}{ \ 1+{\pmb\xi}_j^T{\bf R}_{nj}^{-1}{\pmb\xi}_j \ }. \nonumber
\end{eqnarray}

According to equation (2.2) in \cite{SB95}, we have
\begin{eqnarray}\label{xiRn}
{\pmb\xi}_j^T \ \R_n^{-1}
= \ \dfrac{{\pmb\xi}_j^T{\bf R}_{nj}^{-1}}{ \ 1+{\pmb\xi}_j^T{\bf R}_{nj}^{-1}{\pmb\xi}_j \ }=\beta_j{\pmb\xi_j}^T\R_{nj}^{-1}.
\end{eqnarray}
Thus  using the identity
$
\A^{-1}-\B^{-1}=\A^{-1}(\B-\A)\B^{-1},
$
we obtain that
\begin{eqnarray}
{\bf R}_n^{-1} \ = \ {\bf R}_{nj}^{-1}-{\bf R}_n^{-1}{\pmb\xi}_j{\pmb\xi}_j^T{\bf R}_{nj}^{-1} \
= \ {\bf R}_{nj}^{-1}-\beta_j{\bf R}_{nj}^{-1}{\pmb\xi}_j{\pmb\xi}_j^T{\bf R}_{nj}^{-1}.
\label{RRbeta}
\end{eqnarray}

Next we introduce another definition of $t_n$, as the solution to the following equation
\begin{eqnarray}\label{dfn:t_alternative}
t_n = \ -\dfrac{1}{n}\sum_{j=1}^n \beta_j \
= \ -\dfrac{1}{n}\sum_{j=1}^n \ \dfrac{1}{ \ 1+{\pmb\xi}_j^T{\bf R}_{nj}^{-1}{\pmb\xi}_j \ }.
\end{eqnarray}
We claim that the definition of $t_n$ in (\ref{dfn:t_alternative}) is equivalent to the earlier definition of defining $t_n$ to be the solution to equation \eqref{eqn:t_n}.
In fact, write
\[
\R_n+z  \I \ = \ \sum_{j=1}^n \ {\pmb\xi}_j{\pmb\xi}_j^T+t_n\sigma_n^2 \ \I.
\]
Right-multiplying both sides by ${\bf R}_n^{-1}$ and using \eqref{xiRn} yield
\[
\I+z \ {\bf R}_n^{-1}=\sum_{j=1}^n \ {\pmb\xi}_j{\pmb\xi}_j^T{\bf R}_n^{-1}+t_n\sigma_n^2 \ {\bf R}_n^{-1}
=\sum_{j=1}^n \ \dfrac{{\pmb\xi}_j{\pmb\xi}_j^T{\bf R}_{nj}^{-1}}{ \ 1+{\pmb\xi}_j^T{\bf R}_{nj}^{-1}{\pmb\xi}_j \ } +t_n\sigma_n^2 \
 {\bf R}_n^{-1}.
\]
Taking  trace on both sides and dividing by $n$ one gets that
\begin{eqnarray}\label{beta_tn}
y_n+z \ \dfrac{1}{n}{\rm tr}({\bf R}_n^{-1})
&=& 1-\dfrac{1}{n}\sum_{j=1}^n
\dfrac{1}{ \ 1+{\pmb\xi}_j^T{\bf R}_{nj}^{-1}{\pmb\xi}_j \ }
+t_n\sigma_n^2 \ \dfrac{1}{n}{\rm tr}({\bf R}_n^{-1}) \nonumber\\
&=&1-\dfrac{1}{n}\sum_{j=1}^n \beta_j
+t_n\sigma_n^2 \ \dfrac{1}{n}{\rm tr}({\bf R}_n^{-1}).
\end{eqnarray}
This shows that if $t_n$ satisfies \eqref{dfn:t_alternative}, then $t_n$ satisfies equation \eqref{eqn:t_n}.
On the other hand, if $t_n$ satisfies equation \eqref{eqn:t_n}, from \eqref{beta_tn} we have
\begin{eqnarray*}
-\dfrac{1}{n}\sum_{j=1}^n \beta_j
&=& y_n-1+(z-t_n\si_n^2)\dfrac{1}{n}\tr(\R_n)^{-1} \\
&=& t_n,
\end{eqnarray*}
namely, $t_n$ satisfies \eqref{dfn:t_alternative}.

We proceed to analyze the difference in \eqref{eqn:prop_LSD_signal_noise}.
Since
\begin{eqnarray*}
{\bf S}_n-\dfrac{1}{1+\delta_n} \ \dfrac{1}{n}{\bf A}_n{\bf A}_n^T
&=&\dfrac{1}{n}\sum_{j=1}^n \ ({\bf a}_j+{\bf b}_j)({\bf a}_j+{\bf b}_j)^T-\dfrac{1}{1+\delta_n} \ \dfrac{1}{n}\sum_{j=1}^n  {\bf a}_j{\bf a}_j^T\\
&=&\dfrac{1}{n}\sum_{j=1}^n \ \left(\dfrac{\delta_n}{1+\delta_n} \ {\bf a}_j{\bf a}_j^T+{\bf a}_j{\bf b}_j^T+{\bf b}_j{\bf a}_j^T+{\bf b}_j{\bf b}_j^T\right),
\end{eqnarray*}
we have
\begin{eqnarray*}
\Delta&:=&\dfrac{1}{p}{\rm tr}\left[\(\dfrac{1}{1+\delta_n}\dfrac{1}{n}{\bf A}_n{\bf A}_n^T-z\I\)^{-1}
-\left({\bf S}_n-(z-t_n\sigma_n^2)\I\right)^{-1}\right]\\
&=&\dfrac{1}{p}{\rm tr}\(\(\dfrac{1}{1+\delta_n}\dfrac{1}{n}{\bf A}_n{\bf A}_n^T-z\I\)^{-1}
\left({\bf S}_n-\dfrac{1}{1+\delta_n}\dfrac{1}{n}{\bf A}_n{\bf A}_n^T+t_n\sigma_n^2 \I\right) \right.\\
&& ~~~~~~~~~~~~~~~~~~~~~~~~~~~~~~~~~~~~~~~~~~~ \times
\left.\left({\bf S}_n-(z-t_n\sigma_n^2)\I\right)^{-1}\)
\end{eqnarray*}
\begin{eqnarray*}
&=&\dfrac{1}{np}\sum_{j=1}^n \ \Bigg\{
\dfrac{\delta_n}{1+\delta_n} \ {\bf a}_j^T\left({\bf S}_n-(z-t_n\sigma_n^2)\I\right)^{-1}\left(\dfrac{1}{n(1+\delta_n)}{\bf A}_n{\bf A}_n^T-z\I\right)^{-1}{\bf a}_j\\
&&+ \ {\bf b}_j^T\left({\bf S}_n-(z-t_n\sigma_n^2)\I\right)^{-1}\left(\dfrac{1}{n(1+\delta_n)}{\bf A}_n{\bf A}_n^T-z\I\right)^{-1}{\bf a}_j\\
&&+ \ {\bf a}_j^T\left({\bf S}_n-(z-t_n\sigma_n^2)\I\right)^{-1}\left(\dfrac{1}{n(1+\delta_n)}{\bf A}_n{\bf A}_n^T-z\I\right)^{-1}{\bf b}_j\\
&&+ \ {\bf b}_j^T\left({\bf S}_n-(z-t_n\sigma_n^2)\I\right)^{-1}\left(\dfrac{1}{n(1+\delta_n)}{\bf A}_n{\bf A}_n^T-z\I\right)^{-1}{\bf b}_j\Bigg\}\\
&&+ \ \dfrac{t_n\sigma_n^2}{p}{\rm tr}\left(\left({\bf S}_n-(z-t_n\sigma_n^2)\I\right)^{-1}\left(\dfrac{1}{n(1+\delta_n)}{\bf A}_n{\bf A}_n^T-z\I\right)^{-1}\right).
\end{eqnarray*}

Recall the definitions of $\R_n$, $\R_{nj}$, ${\bf B}_n$ and $\beta_j$ in (\ref{betaj}). Using (\ref{RRbeta}) we have
\begin{eqnarray*}
\Delta&=&\dfrac{1}{np} \ \sum_{j=1}^n \ \bigg[ \ \dfrac{\delta_n}{1+\delta_n} \ {\bf a}_j^T{\bf R}_{nj}^{-1}{\bf B}_n^{-1}{\bf a}_j \ - \ \dfrac{\delta_n}{1+\delta_n} \ \beta_j \  {\bf a}_j^T{\bf R}_{nj}^{-1}{\pmb\xi}_j{\pmb\xi}_j^T{\bf R}_{nj}^{-1}{\bf B}_n^{-1}{\bf a}_j\\
&&+ \ {\bf b}_j^T{\bf R}_{nj}^{-1}{\bf B}_n^{-1}{\bf a}_j \ - \ \beta_j \ {\bf b}_j^T{\bf R}_{nj}^{-1}{\pmb\xi}_j{\pmb\xi}_j^T{\bf R}_{nj}^{-1}{\bf B}_n^{-1}{\bf a}_j\\
&&  + \ {\bf a}_j^T{\bf R}_{nj}^{-1}{\bf B}_n^{-1}{\bf b}_j \ - \ \beta_j \ {\bf a}_j^T{\bf R}_{nj}^{-1}{\pmb\xi}_j{\pmb\xi}_j^T{\bf R}_{nj}^{-1}{\bf B}_n^{-1}{\bf b}_j\\
&&+ \ {\bf b}_j^T{\bf R}_{nj}^{-1}{\bf B}_n^{-1}{\bf b}_j \ - \ \beta_j \ {\bf b}_j^T{\bf R}_{nj}^{-1}{\pmb\xi}_j{\pmb\xi}_j^T{\bf R}_{nj}^{-1}{\bf B}_n^{-1}{\bf b}_j \ \bigg]\\
&&+ \ \dfrac{t_n\sigma_n^2}{p} \ {\rm tr}({\bf R}_n^{-1}{\bf B}_n^{-1}).
\end{eqnarray*}
Define
\begin{eqnarray}\label{etaj}
\begin{array}{cc}
\rho_j=\dfrac{1}{n}{\bf a}_j^T{\bf R}_{nj}^{-1}{\bf a}_j,
& \hat{\rho}_j=\dfrac{1}{n}{\bf a}_j^T{\bf R}_{nj}^{-1}{\bf B}_n^{-1}{\bf a}_j, \\
w_j=\dfrac{1}{n}{\bf b}_j^T{\bf R}_{nj}^{-1}{\bf b}_j,
& \hat{w}_j=\dfrac{1}{n}{\bf b}_j^T{\bf R}_{nj}^{-1}{\bf B}_n^{-1}{\bf b}_j, \\
\eta_j=\dfrac{1}{n}{\bf a}_j^T{\bf R}_{nj}^{-1}{\bf b}_j,
& \hat{\eta}_j=\dfrac{1}{n}{\bf a}_j^T{\bf R}_{nj}^{-1}{\bf B}_n^{-1}{\bf b}_j,\\
\gamma_j=\dfrac{1}{n}{\bf b}_j^T{\bf R}_{nj}^{-1}{\bf a}_j,
&\hat{\gamma}_j=\dfrac{1}{n}{\bf b}_j^T{\bf R}_{nj}^{-1}{\bf B}_n^{-1}{\bf a}_j.
\end{array}
\end{eqnarray}
Certainly $\eta_j=\ga_j$, but introducing $\ga_j$ makes the computations below more clear.

Recall that ${\pmb\xi}_j=(1/\sqrt{n})({\bf a}_j+{\bf b}_j)$, and so
$\beta_j^{-1}=1+\rho_j+w_j+\eta_j+\ga_j$.
We can then rewrite $\Delta$ as
\begin{eqnarray*}
\Delta&=&\dfrac{1}{p}\sum_{j=1}^n\beta_j\bigg(
\dfrac{\delta_n}{1+\delta_n}\hat{\rho}_j(1+\rho_j+\eta_j+\gamma_j+w_j)
-\dfrac{\delta_n}{1+\delta_n}(\rho_j+\eta_j)(\hat{\rho}_j+\hat{\ga}_j)\\
&&+\hat{\gamma}_j(1+\rho_j+\eta_j+\gamma_j+w_j)-(\ga_j+w_j)(\hat{\ga}_j+\hat{\rho}_j)\\
&&+\hat{\eta}_j(1+\rho_j+\eta_j+\gamma_j+w_j)-(\rho_j+\eta_j)(\hat{\eta}_j+\hat{w}_j)\\
&&+\hat{w}_j(1+\rho_j+\eta_j+\gamma_j+w_j)-(\gamma_j+w_j)(\hat{\eta}_j+\hat{w}_j)\bigg)\\
&&+\dfrac{t_n\sigma_n^2}{p} {\rm tr}({\bf R}_n^{-1}{\bf B}_n^{-1})\\
&=&\dfrac{1}{p}\sum_{j=1}^n \beta_j\left(\dfrac{1}{1+\delta_n}\hat{\rho}_j(\delta_n-\gamma_j-w_j)+
\hat{\gamma}_j\left(1+\dfrac{1}{1+\delta_n}(\rho_j+\eta_j)\right)+\hat{\eta}_j+\hat{w}_j\right)\\
&&+\dfrac{t_n\sigma_n^2}{p} {\rm tr}({\bf R}_n^{-1}{\bf B}_n^{-1})\\
&:=&\Delta_1+\Delta_2+\Delta_3,
\end{eqnarray*}
where
\begin{align}\label{eq:Delta}
\notag \Delta_1
\notag         =&\dfrac{1}{p(1+\delta_n)}\sum_{j=1}^n\beta_j\hat{\rho}_j(\delta_n-w_j)
-\dfrac{1}{p(1+\delta_n)}\sum_{j=1}^n\beta_j\hat{\rho}_j\gamma_j,\\
      \Delta_2
       =&\dfrac{1}{p}\sum_{j=1}^n\beta_j\hat{\gamma}_j\left(1+\dfrac{1}{1+\delta_n}(\rho_j+\eta_j)\right)
+\dfrac{1}{p}\sum_{j=1}^n\beta_j\hat{\eta}_j, \q \mbox{and}\\
\notag  \Delta_3
\notag         =&\dfrac{1}{p}\sum_{j=1}^n\beta_j\left(\hat{w}_j-\dfrac{\sigma_n^2}{n} {\rm tr}({\bf R}_n^{-1}{\bf B}_n^{-1})\right),
\end{align}
where in the last equality we used the equivalent definition \eqref{dfn:t_alternative} of $t_n$.


\begin{lem}
Suppose that $t_n$ solves equation (\ref{eqn:t_n}) for $z=u+iv\in\bC^*$, then for all $j=1,\ldots,n$,
$|\beta_j|$ is bounded by $\dfrac{|z-t_n\sigma_n^2|}{v-\Im(t_{n})\sigma_n^2}$.
\label{betabound}
\end{lem}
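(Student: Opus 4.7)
The plan is to bound the modulus of the denominator in $\beta_j=1/(1+\pmb\xi_j^T\R_{nj}^{-1}\pmb\xi_j)$ by exploiting the sign structure: $\R_{nj}=\S_{nj}-w\I$ where $w:=z-t_n\sigma_n^2$ and $\S_{nj}=\sum_{k\neq j}\pmb\xi_k\pmb\xi_k^T$ is (real symmetric and) positive semidefinite. Lemma~\ref{lem:tlimit} (applied on $\bC^*\subset\bC_2$) already supplies $\Im(w)=v-\Im(t_n)\sigma_n^2>0$, which makes $\R_{nj}$ invertible and $w\neq 0$; this is the only ingredient I would borrow from the earlier preparation.

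The key step is to multiply the offending denominator by $w$ and take imaginary parts. Writing the spectral decomposition $\S_{nj}=\sum_i\lambda_i u_iu_i^T$ with $\lambda_i\ge 0$ and setting $c_i=|u_i^T\pmb\xi_j|^2\ge 0$, we have
\[
\pmb\xi_j^T\R_{nj}^{-1}\pmb\xi_j=\sum_i\frac{c_i}{\lambda_i-w}.
\]
The partial-fraction identity $w/(\lambda_i-w)=-1+\lambda_i/(\lambda_i-w)$ then gives
\[
\frac{w}{\beta_j}=w\bigl(1+\pmb\xi_j^T\R_{nj}^{-1}\pmb\xi_j\bigr)=w-\|\pmb\xi_j\|^2+\sum_i\frac{c_i\lambda_i}{\lambda_i-w}.
\]
Taking imaginary parts and using $c_i\lambda_i\ge 0$,
\[
\Im\!\left(\frac{w}{\beta_j}\right)=\Im(w)\left(1+\sum_i\frac{c_i\lambda_i}{|\lambda_i-w|^2}\right)\ge \Im(w)>0.
\]
Consequently $|w/\beta_j|\ge \Im(w/\beta_j)\ge \Im(w)$, which after rearrangement yields
\[
|\beta_j|\le \frac{|w|}{\Im(w)}=\frac{|z-t_n\sigma_n^2|}{v-\Im(t_n)\sigma_n^2},
\]
as claimed.

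I do not expect a real obstacle here: the argument is essentially a one-line manipulation of the Stieltjes transform of the PSD matrix $\S_{nj}$ against the vector $\pmb\xi_j$, and it is uniform in $j$ because the same nonnegativity $\lambda_i\ge 0$ holds for every leave-one-out matrix $\S_{nj}$. The only point that requires care is checking $\Im(w)>0$ on the relevant domain $\bC^*$, which has already been established.
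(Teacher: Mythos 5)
Your proof is correct and is essentially the same argument as the paper's: both multiply the denominator $1+\pmb\xi_j^T\R_{nj}^{-1}\pmb\xi_j$ by $w=z-t_n\sigma_n^2$, show that $\Im\bigl(w\,\pmb\xi_j^T\R_{nj}^{-1}\pmb\xi_j\bigr)\ge 0$ using the positive semidefiniteness of $\S_{nj}$ together with $\Im(w)>0$ from Lemma~\ref{lem:tlimit}, and then bound $|\beta_j|\le |w|/\Im(w/\beta_j)\le |w|/\Im(w)$. The only cosmetic difference is that you establish the nonnegativity of the imaginary part via the spectral decomposition of $\S_{nj}$ and a partial-fraction identity, whereas the paper writes $w\R_{nj}^{-1}=\bigl(w^{-1}\S_{nj}-\I\bigr)^{-1}$ and uses the resolvent-difference identity to exhibit $\Im\bigl(w\,\pmb\xi_j^T\R_{nj}^{-1}\pmb\xi_j\bigr)$ as a manifestly nonnegative quadratic form; the two computations are equivalent.
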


%

\begin{lem}
Suppose that $t_n$ solves equation (\ref{eqn:t_n})  for $z=u+iv\in\bC^*$, then
$\|\B_n^{-1}\|$ is bounded by $v^{-1}$.
\label{Bbound}
\end{lem}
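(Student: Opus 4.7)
The plan is to write $\B_n=H-iK$ with $H$ Hermitian and $K$ Hermitian positive definite satisfying $\lambda_{\min}(K)\ge v$. The bound $\|\B_n^{-1}\|\le v^{-1}$ then falls out of a one-line quadratic-form estimate.

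First I would verify that $\Im(\delta_n)\ge 0$. By Lemma \ref{lem:tlimit}, $w:=z-t_n\si_n^2$ lies in $\bC^+$, and since $\S_n$ is real symmetric with nonnegative eigenvalues, the quantity $\tfrac{1}{p}\tr(\R_n^{-1})=\tfrac{1}{p}\sum_j (\lambda_j(\S_n)-w)^{-1}$ is a Stieltjes transform of a compactly supported probability measure on $[0,\infty)$ evaluated in the upper half plane, hence has strictly positive imaginary part. Multiplying by the nonnegative real scalar $y_n\si_n^2$ preserves the sign, so $\Im(\delta_n)\ge 0$. (If $\si_n=0$ then $\delta_n=0$ and $\B_n=\cA_n-z\I$, in which case the bound is immediate because $\cA_n$ is real symmetric nonnegative definite and $\Im(z)=v$.)

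Next, writing $1+\delta_n=a+ib$ with $b\ge 0$ gives $1/(1+\delta_n)=\mu-i\nu$ with $\nu=b/(a^2+b^2)\ge 0$, so that
\[
\B_n=\bigl(\mu\,\cA_n-u\I\bigr)-i\,\bigl(\nu\,\cA_n+v\I\bigr)=:H-iK,
\]
where $H$ is real symmetric and $K=\nu\cA_n+v\I$ is Hermitian positive definite with $\lambda_{\min}(K)\ge v$, since $\cA_n$ is nonnegative definite and $\nu\ge 0$. For any $x\in\bC^p$,
\[
|x^*\B_n x|\ \ge\ |\Im(x^*\B_n x)|\ =\ x^*Kx\ \ge\ v\,\|x\|^2,
\]
and Cauchy--Schwarz $\|\B_n x\|\cdot\|x\|\ge |x^*\B_n x|$ then yields $\|\B_n x\|\ge v\|x\|$ for every $x\in\bC^p$, whence $\B_n$ is invertible with $\|\B_n^{-1}\|\le v^{-1}$.

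The argument is essentially routine; the only point requiring care is the sign chasing $\Im(\delta_n)\ge 0\Rightarrow \Im(1/(1+\delta_n))\le 0$ that makes the anti-Hermitian part $-K$ of $\B_n$ negative definite with operator norm at least $v$. No deep obstacle is anticipated.
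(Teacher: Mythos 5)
Your proof is correct, and it takes a slightly different route than the paper's. The paper observes that $\B_n=(1+\delta_n)^{-1}\cA_n-z\I$ has eigenvalues $\lambda^B=\lambda/(1+\delta_n)-z$ where $\lambda\geq 0$ runs over the eigenvalues of $\cA_n$, and bounds
\[
|\lambda^B|\geq|\Im(\lambda^B)|=\frac{\Im(\delta_n)}{|1+\delta_n|^2}\,\lambda+v\geq v
\]
using $\Im(\delta_n)>0$, which as in your argument comes from Lemma \ref{lem:tlimit} (giving $\Im(z-t_n\sigma_n^2)>0$) together with the sign of the Stieltjes transform. Passing from that eigenvalue bound to $\|\B_n^{-1}\|\leq v^{-1}$ tacitly uses that $\B_n$ is normal (a complex scalar multiple of a real symmetric matrix, shifted by $z\I$), so that its singular values coincide with the moduli of its eigenvalues. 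Your numerical-range argument instead decomposes $\B_n=H-iK$ with $K=\nu\cA_n+v\I$ Hermitian with smallest eigenvalue at least $v$, then runs $\|\B_n x\|\,\|x\|\geq|x^*\B_n x|\geq x^*Kx\geq v\|x\|^2$; it reaches the same bound without appealing to normality at all, so it is marginally more self-contained and would survive in settings where the resolvent fails to be normal. Both proofs are equally short and hinge on the same sign fact for $\Im(\delta_n)$.
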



\begin{lem}
Suppose that $t_n$ solves equation (\ref{eqn:t_n})  for $z=u+iv\in\bC^*$, then
the random variables $\varpi_j$  satisfy
$$\max_{1\le j\le n} E|\varpi_j|^4\le \ \dfrac{C(\log n)^6}{n^2(v-t_{n2}\si_n^2)^4},$$
where $\varpi_j$ can be any of $\eta_j$, $\hat{\eta}_j$, $\gamma_j$ and $\hat{\gamma}_j$ defined in (\ref{etaj}), and $C$ is a constant independent of $n$.
\label{rv4}
\end{lem}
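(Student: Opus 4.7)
All four quantities are linear forms in the noise column $\pep_j$ (recall $\b_j=\sigma_n\pep_j$). Because $\S_{nj}$ is real symmetric, $\R_{nj}=\S_{nj}-(z-t_n\sigma_n^2)\I$ is symmetric, so $\eta_j=\gamma_j$; likewise $\hat\eta_j$ and $\hat\gamma_j$ differ only by the side on which $\B_n^{-1}$ appears. It therefore suffices to bound $E|\eta_j|^4$ and $E|\hat\eta_j|^4$, the hatted version being handled identically after absorbing the extra factor $\|\B_n^{-1}\|\le v^{-1}$ supplied by Lemma~\ref{Bbound}.

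The core estimate would proceed as follows. Write $\eta_j=(\sigma_n/n)\mathbf{c}^T\pep_j$ with $\mathbf{c}:=\R_{nj}^{-T}\a_j$, and pretend momentarily that $\R_{nj}^{-1}$ is independent of $\pep_j$. Expanding the fourth moment of a linear form in i.i.d.\ centered unit-variance entries yields
\[
E\bigl[|\mathbf{c}^T\pep_j|^4\bigm|\mathbf{c}\bigr]\le C\Bigl(\|\mathbf{c}\|^4+E|\ep_{11}|^4\sum_i|c_i|^4\Bigr)\le C(\log n)^2\|\mathbf{c}\|^4,
\]
using the truncation $|\ep_{11}|\le a\log n$ (Assumption~(F)), which forces $E|\ep_{11}|^4\le(a\log n)^2$, together with $\sum|c_i|^4\le\|\mathbf{c}\|^4$. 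To bound $\mathbf{c}$, combine Assumption~(H), giving $\|\a_j\|^2\le\tr(\A_n\A_n^T)\le n\log n$, with the resolvent bound $\|\R_{nj}^{-1}\|\le(v-t_{n2}\sigma_n^2)^{-1}$ that follows from Lemma~\ref{lem:tlimit} securing $\Im(z-t_n\sigma_n^2)>0$. Multiplication yields $\|\mathbf{c}\|^4\le n^2(\log n)^2(v-t_{n2}\sigma_n^2)^{-4}$, and substitution gives the preliminary bound
\[
E|\eta_j|^4\le\frac{C(\log n)^4}{n^2(v-t_{n2}\sigma_n^2)^4},
\]
with the $\hat\eta_j$ version obtained by taking $\mathbf{c}=\B_n^{-T}\R_{nj}^{-T}\a_j$ and invoking Lemma~\ref{Bbound}.

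The main obstacle is that the pretended independence is false: $t_n$ itself depends on $\pep_j$ through the implicit equation~\eqref{eqn:t_n}. To legitimately decouple the two, I would introduce the leave-one-out surrogate $\tilde t_n$ defined as the solution of~\eqref{eqn:t_n} with $\S_n$ replaced by $\S_{nj}$. Then $\tilde\R_{nj}:=\S_{nj}-(z-\tilde t_n\sigma_n^2)\I$ is measurable with respect to the $\sigma$-algebra generated by $\A_n$ and $\{\pep_k:k\neq j\}$, hence genuinely independent of $\pep_j$, and the resolvent identity
\[
\R_{nj}^{-1}=\tilde\R_{nj}^{-1}+(t_n-\tilde t_n)\sigma_n^2\,\R_{nj}^{-1}\tilde\R_{nj}^{-1}
\]
splits $\eta_j$ into a principal part, on which the previous paragraph applies cleanly with $\tilde\R_{nj}$, plus a remainder governed by $|t_n-\tilde t_n|$. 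The hardest subtask is then a scalar Lipschitz estimate $|t_n-\tilde t_n|=O(1/\sqrt n)$, which I expect to follow from the contraction properties of the fixed-point map underlying Lemmas~\ref{lem:tn_exist}--\ref{lem:t_exist} combined with the rank-one nature of $\S_n-\S_{nj}$. This remainder contributes at most an additional factor $(\log n)^2$, accounting for the gap between the preliminary $(\log n)^4$ bound and the stated $(\log n)^6$.
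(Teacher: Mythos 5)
Your core estimate mirrors the paper's: bound $\|\R_{nj}^{-1}\|$ by $(v-t_{n2}\sigma_n^2)^{-1}$ (Lemma~\ref{lem:tlimit}), use assumption~\eqref{asm:A_bd} to get $|\a_j|^2\le Cn\log n$, and apply a moment inequality for the entries of $\pep_j$. You use a linear-form moment bound while the paper rewrites $|\a_j^T\R_{nj}^{-1}\pep_j|^4$ as a squared quadratic form and applies Lemma~\ref{xtrx}; these are interchangeable. The real divergence is your handling of the dependence of $\R_{nj}$ on $\pep_j$ through $t_n$. Here you are more careful than the paper, which simply asserts ``$\pep_j$ is independent of $\R_{nj}^{-1}$'' and then invokes Lemma~\ref{xtrx} --- a lemma whose statement requires a deterministic (or at least $\pep_j$-independent) matrix. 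As you note, that independence is not literal: $t_n$ is defined through equation~\eqref{eqn:t_n} in terms of $\S_n$, so it depends on every column including $\pep_j$. Your leave-one-out surrogate $\tilde t_n$ is the standard device to restore genuine independence, and the resolvent identity gives a clean principal-plus-remainder split. However, the key step $|t_n-\tilde t_n|=O(n^{-1/2})$ is only conjectured, not proved; as written your argument has the same shape as the paper's unproved independence assertion, merely relocated. Sketching how the rank-one difference $\S_n-\S_{nj}=\pmb\xi_j\pmb\xi_j^T$ and the contraction from Lemma~\ref{lem:tn_exist} force this (in fact one should get $O(1/p)$) would close the gap.

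Your attribution of the $(\log n)^6$ to the remainder term is not quite right and is not needed. Your sharper $(\log n)^4$ comes from using $E|\ep_{11}|^4\le(a\log n)^2E|\ep_{11}|^2=(a\log n)^2$; the paper's $(\log n)^6$ simply comes from the cruder $E|\ep_{11}|^4\le(a\log n)^4$ read straight off the truncation~\eqref{asm:eps_bd}, combined with $E(\a_j^T\R_{nj}^{-1}\bar\R_{nj}^{-1}\a_j)^2\le n^2(\log n)^2(v-t_{n2}\sigma_n^2)^{-4}$. Since the lemma only asserts an upper bound, both exponents are acceptable; there is no missing $(\log n)^2$ that the remainder must supply.
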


\begin{lem}
Suppose that $t_n$ solves equation (\ref{eqn:t_n}) for $z=u+iv\in\bC^*$, then
the random variables $w_j$ and $\hat{w}_j$ satisfy
\begin{eqnarray*}
&&\max_{1\le j\le n} E\left|w_j-\dfrac{\sigma_n^2}{n}{\rm tr}({\bf R}_n^{-1})\right|^4
\le \ \dfrac{C(\log n)^8}{n^2(v-t_{n2}\sigma_n^2)^4}, \\
&&\max_{1\le j\le n} E\left|\hat{w}_j-\dfrac{\sigma_n^2}{n}{\rm tr}({\bf R}_n^{-1}{\bf B}_n^{-1})\right|^4
\le \ \dfrac{C(\log n)^8}{n^2v^4(v-t_{n2}\sigma_n^2)^4}.
\end{eqnarray*}
\label{ww}
\end{lem}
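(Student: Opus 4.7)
The plan is to write, for each of $w_j$ and $\hat w_j$, a decomposition of the form (quadratic-form concentration piece) $+$ (rank-one resolvent correction), and then to decouple the self-referential $\pep_j$-dependence of $\R_{nj}^{-1}$ by a leave-one-out substitution. Concretely, write
\[
w_j - \frac{\si_n^2}{n}\tr(\R_n^{-1}) = \Big(w_j - \frac{\si_n^2}{n}\tr(\R_{nj}^{-1})\Big) + \frac{\si_n^2}{n}\big(\tr(\R_{nj}^{-1}) - \tr(\R_n^{-1})\big),
\]
with the analogous decomposition for $\hat w_j$ using $\R_n^{-1}\B_n^{-1}$ in place of $\R_n^{-1}$, and use $|a+b|^4\le 8(|a|^4+|b|^4)$ to bound the two pieces separately.

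The rank-one correction is immediate: because $\R_n - \R_{nj} = {\pmb\xi}_j{\pmb\xi}_j^T$ is rank one and $\Im(z - t_n\si_n^2) = v - t_{n2}\si_n^2 > 0$ on the set $t_n\in\sD$ (Lemma~\ref{lem:tn_exist}), the standard interlacing bound for Stieltjes transforms gives
\[
|\tr(\R_{nj}^{-1}) - \tr(\R_n^{-1})| \le (v - t_{n2}\si_n^2)^{-1},
\]
whose fourth moment is of order $n^{-4}(v-t_{n2}\si_n^2)^{-4}$, dominated by the target bound; Lemma~\ref{Bbound} then supplies the extra $v^{-4}$ needed for $\hat w_j$. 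For the quadratic-form piece $\pep_j^T\R_{nj}^{-1}\pep_j - \tr(\R_{nj}^{-1})$, the dependence of $t_n$ on $\pep_j$ must first be removed. I would introduce $t_n^{(j)}$, the unique solution in $\sD$ of \eqref{eqn:t_n} with $\S_n$ replaced by $\S_{nj}$ (well-defined by the same fixed-point argument as Lemma~\ref{lem:tn_exist}), and set $\R_{nj}^\circ := \S_{nj} - (z - t_n^{(j)}\si_n^2)\I$, which is independent of $\pep_j$. The resolvent identity yields $\|\R_{nj}^{-1} - (\R_{nj}^\circ)^{-1}\| \le \si_n^2|t_n - t_n^{(j)}|(v-t_{n2}\si_n^2)^{-2}$, and a stability estimate for the contractive map behind \eqref{eqn:t_n} on $\sD$ forces $|t_n - t_n^{(j)}| = O(1/n)$, making the replacement error subordinate.

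Once the quadratic form is written in terms of $(\R_{nj}^\circ)^{-1}$, I condition on $\sigma(\A_n, \{\pep_k\}_{k\neq j})$ and apply the standard fourth-moment quadratic-form inequality (Lemma~B.26 of Bai--Silverstein),
\[
E\big|\pep_j^T M \pep_j - \tr M\big|^4 \le C\big[(E|\ep_{11}|^4)^2(\tr(MM^*))^2 + E|\ep_{11}|^8\, \tr((MM^*)^2)\big],
\]
with $M = (\R_{nj}^\circ)^{-1}$ and $\|M\|\le (v-t_{n2}\si_n^2)^{-1}$. The truncation $|\ep_{11}|\le a\log n$ from \eqref{asm:eps_bd} gives $E|\ep_{11}|^4\le C(\log n)^2$ and $E|\ep_{11}|^8\le C(\log n)^6$; together with $\tr(MM^*)\le p(v-t_{n2}\si_n^2)^{-2}$ and $\tr((MM^*)^2)\le p(v-t_{n2}\si_n^2)^{-4}$, multiplying by $\si_n^8/n^4$ and using $p\asymp n$ delivers the $w_j$ bound (the $(\log n)^8$ exponent has slack to absorb the leave-one-out error). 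For $\hat w_j$ the same inequality with $M = (\R_{nj}^\circ)^{-1}\B_n^{-1}$ and $\|M\|\le v^{-1}(v-t_{n2}\si_n^2)^{-1}$, together with a parallel leave-one-out for $\delta_n$ inside $\B_n$, produces the extra $v^{-4}$ factor. The main obstacle throughout is the stability estimate $|t_n - t_n^{(j)}| = O(1/n)$ uniform in $j$, which must be quantitative enough to feed into an $n^{-2}$ fourth-moment bound; it relies on the imaginary-part gap $v - t_{n2}\si_n^2 \ge v/2$ inherited from membership in $\sD$, and is unavoidable because the direct concentration inequality requires genuine independence between the vector and the matrix.
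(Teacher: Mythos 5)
Your decomposition and tools match the paper's exactly: split off the rank-one trace correction $\tfrac{\si_n^2}{n}\tr(\R_{nj}^{-1}-\R_n^{-1})$ via the interlacing bound (Lemma~2.6 of \cite{SB95}), control the quadratic-form fluctuation with the fourth-moment inequality of Lemma~\ref{xtrx} together with the truncation \eqref{asm:eps_bd}, and use Lemma~\ref{Bbound} for the extra $v^{-4}$ in the $\hat w_j$ bound. The one place where you do genuinely more work is the leave-one-out substitution $t_n\mapsto t_n^{(j)}$: you are right that $\R_{nj}^{-1}=(\S_{nj}-(z-t_n\si_n^2)\I)^{-1}$ is \emph{not} independent of $\pep_j$, because the random scalar $t_n$ solving \eqref{eqn:t_n} is built from the full $\S_n$, so one cannot simply condition on $\R_{nj}^{-1}$ before invoking Lemma~\ref{xtrx}. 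The paper applies the concentration inequality to $\R_{nj}^{-1}$ directly and does not spell this decoupling out; your introduction of the $\pep_j$-free matrix $(\R_{nj}^\circ)^{-1}$ built from the leave-one-out solution $t_n^{(j)}$ in $\sD$, together with the fixed-point stability bound $|t_n-t_n^{(j)}|=O(1/n)$ (which follows because the map $G_n$ in the proof of Lemma~\ref{lem:tn_exist} is a uniform contraction on $\sD$ for $z\in\bC^*$, and the rank-one passage $\S_n\to\S_{nj}$ perturbs $G_n$ by $O(1/p)$ via interlacing), is exactly what is needed to make that application rigorous; the resulting replacement errors in both the bilinear form and the trace are indeed an order of magnitude below the target, as you assert. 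The same remark applies to $\de_n$ inside $\B_n^{-1}$ for $\hat w_j$. In short, you follow the same route as the paper but make explicit a dependence that the paper's proof leaves implicit, and your sketch is correct.
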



The proofs of Lemmas \ref{betabound} - \ref{ww} are also given in the Appendix \ref{appendix:pfs}.


\subsubsection{Proof of Proposition \ref{prop:LSD_signal_noise}}\label{ssec:pf_prop_LSD_signal_noise}

\begin{proof}Recall the $\De_j, j=1,2,3$ defined in \eqref{eq:Delta}.
The proof will be completed if we show $\De_j\to 0$ almost surely for all $j=1,2,3$.

By
\eqref{bound_RRj},
\eqref{asm:A_bd}
and Lemma \ref{Bbound}, there exists a constant $C$ such that
\begin{eqnarray}
~~~
\max_{j=1,\ldots,n}  |\rho_j| \le \ \dfrac{C\log(n)}{v-t_{n2}\sigma_n^2}, \q \mbox{and}\q
\max_{j=1,\ldots,n} |\hat{\rho}_j| \le \ \dfrac{C\log(n)}{v(v-t_{n2}\sigma_n^2)}.
\label{rhoj}
\end{eqnarray}
Moreover, by Lemmas \ref{lem:t_properties}, \ref{lem:tlimit} and  the convergence of $\{F^{\S_n}\}$, we have as $p\to\infty$,
\begin{eqnarray}\label{eqn:de_conv}
\de_n=y_n\si_nm_n(z-t_n\si_n^2) ~ \to ~ \de=\de(z)=y\si^2m(z-t\si^2),
\end{eqnarray}
and $\Im(\de)>0$.
In particular, for all $n$ large enough, we have
\begin{eqnarray}
\dfrac{1}{ \ |1+\delta_n| \ } \ \le \ \dfrac{2}{\liminf_n\Im(\de_n)} <\infty.
\label{delta0}
\end{eqnarray}

We now show that $\De_3\to 0$ almost surely.
Using Markov's inequality and H\"{o}lder's inequality, for any $\eps>0$, we have
\begin{eqnarray*}
{\rm P}\(|\De_3|\geq \eps\)&\le& \dfrac{1}{\eps^4} E
\left|\dfrac{1}{p}\sum_{j=1}^n\beta_j \(\hat{w}_j-\dfrac{\sigma_n^2}{n}{\rm tr}({\bf R}_n^{-1}{\bf B}_n^{-1})\)\right|^4\\
&\le& \dfrac{n^3}{p^4\eps^4}\sum_{j=1}^nE|\beta_j|^4
\left|\hat{w}_j-\dfrac{\sigma_n^2}{n}{\rm tr}({\bf R}_n^{-1}{\bf B}_n^{-1})\right|^4\\
&\le&\dfrac{C(\log n)^8}{n^2\eps^4v^4(v-t_{n2}\si_n^2)^8}\cdot|z-t_n\si_n^2|^4,
\end{eqnarray*}
where the last step follows from Lemmas \ref{betabound} and \ref{ww}.
Thus $\De_3\to 0$ almost surely by Lemmas \ref{lem:tlimit}, \ref{lem:t_properties} and the Borel-Cantelli Lemma.

Similarly we can prove that $\De_j\to 0$ almost surely for $j=1,2$ by using Lemmas \ref{betabound}, \ref{Bbound}, \ref{rv4}, \ref{ww} and inequalities (\ref{rhoj}), (\ref{delta0}).

\end{proof}

\subsubsection{Proof of Theorem \ref{thm:LSD_signal_noise}}\label{ssec:thm_LSD_signal_noise}

\begin{proof}
We first show that equation (1.1) in \cite{DS2007a} can be derived from Proposition \ref{prop:LSD_signal_noise}.

For any fixed $z\in\bC^*$, by Proposition \ref{prop:LSD_signal_noise}, Lemmas \ref{lem:tlimit}, \ref{lem:t_properties}, \ref{Bbound},  and the dominated convergence theorem we obtain that
\begin{eqnarray}\label{rem}
m(z-t\si^2) \ = \ \int\dfrac{1}{(1+\de)^{-1}x-z} \  dF^{\cA}(x),
\end{eqnarray}
where $t$ is the unique solution to equation \eqref{eqn:t} and $\de=y\si^2m(z-t\si^2)$.
Moreover, if we let $\ga(z)=z-t(z)\si^2$, then by the definition \eqref{eqn:t} of $t$ and the convergence
\eqref{eqn:de_conv} we have
$$t=y-1+y\ga m(\ga), ~~~~~~~~~~~\de=y\si^2m(\ga),$$
and
\[
z=\ga+t\si^2=\ga+\ga y\si^2 m(\ga)+\si^2(y-1).
\]
Substituting the expressions of $t$, $\de$ and $z$ in terms of $\ga$ into equation (\ref{rem}) yields
\begin{equation}\label{rem_limit}
m(\ga)=\int \dfrac{dF^{\cA}(x)}{\dfrac{x}{1+y\si^2m(\ga)}-\ga(1+y\si^2m(\ga))-\si^2(y-1)},
\end{equation}
where $\ga\in\bC_{\ga}:=\{\ga=z-t(z)\si^2: z\in\bC^*\}$.

Next we show that \eqref{rem_limit} holds for all $\ga\in\bC^+$.
In fact, by Lemma \ref{lem:t_analytic}, $\ga(z)$ is analytic on $\bC^*$. In particular,
for any convergent sequence $\{z^{(m)}\}\subset\bC^*$ such that
$z^{(m)}\to z_\infty\in\bC^*$ as $m\to\infty$,
we have $\ga_m:=\ga(z^{(m)})\to \ga_\infty:=\ga(z_\infty)$,
all in $\bC_{\ga}\subseteq \bC^+$;
moreover,   $\ga_m$ and $\ga_\infty$ all satisfy equation \eqref{rem_limit}.
Noting that equation \eqref{rem_limit} is well-defined for all $\ga\in\bC^+$,  by the analyticity of $m(\ga)$ on $\bC^+$ and the uniqueness theorem for analytic functions, we conclude that equation (\ref{rem_limit}) holds for every $\ga\in\bC^+$, in other words, equation~(1.1) in \cite{DS2007a} holds.

In the following, we will show that equation (\ref{eqn:LSD_signal_to_noisy}) in Theorem \ref{thm:LSD_signal_noise} holds.

For any $z\in\bC^*$, denote $\al(z)=z(1+\de(z))$, where, recall that,  $\de(z)=y\si^2 m(\ga)$ and $\ga=z-t\si^2$.  We further define
\[
d(\ga)=1+y\si^2m(\ga) (=1+\de(z)), ~~~~{\rm and}~~~~ g(\al)=1-y\si^2m_{\cA}(\al).
\]
We will show the following facts:
\begin{compactenum}\setcounter{enumi}{6}
 \item[]
 \begin{compactenum}
  \item\label{F1} $g(\al)=1/d(\ga)$,
  \item\label{F2} $\al=\ga d^2(\ga)+\si^2(y-1)d(\ga)$, or $\ga=\al g^2(\al)-\si^2(y-1)g(\al)$.
\end{compactenum}
\end{compactenum}

In fact, we can rewrite equation \eqref{rem} as
\begin{eqnarray*}
m_{\cA}\(\al\)
=(1+\de)^{-1} m(\ga).
\end{eqnarray*}
Noting that $\de=y\si^2m(\ga),$
we have
\[
y\si^2 m_{\cA}(\al)=\dfrac{\de}{1+\de}, \mbox{ and hence } g(\al)=\frac{1}{1+\delta} = \frac{1}{d(\ga)},
\]
namely, \eqref{F1} holds.
Besides, $y\si^2m_{\cA}(\al)=1-{1}/{(1+\de)}$ implies $\al\in\bC^+$  since
$\de=y\si^2 m(z-t\si^2)\in \bC^+$ by Lemma \ref{lem:t_properties}.

We now show \eqref{F2}.
Let $\beta=t\si^2(1+\de)$. Then
\begin{eqnarray}\label{ga}
\ga=z-t\si^2=\dfrac{\al-\beta}{1+\de}.
\end{eqnarray}
By substituting \eqref{ga} and $\de=y\si^2 m(\ga)$ into equation \eqref{eqn:t}, we obtain
\[
\dfrac{\beta}{\si^2(1+\de)}=y-1+\dfrac{\de(\al-\beta)}{\si^2(1+\de)}.
\]
That is,
\[
\beta=\si^2(y-1)+\dfrac{\de}{1+\de}\al.
\]
Therefore,
\begin{eqnarray*}
\ga&=&\dfrac{\al-\beta}{1+\de}
= \dfrac{\al}{(1+\de)^2}-\dfrac{\si^2(y-1)}{1+\de}\\
&=&\al g^2(\al)- \si^2(y-1)g(\al),
\end{eqnarray*}
namely,  \eqref{F2} holds.

Next, by \eqref{rem_limit} and the definitions of $\al$ and $d(\ga)$ and \eqref{F2}, we have
\begin{eqnarray*}
m(\ga)
&=&d(\ga)\int\dfrac{1}{x-\al}dF^{\cA}(x).
\end{eqnarray*}
Using the facts
\eqref{F1} and \eqref{F2}
we obtain that
\begin{equation}\label{g_a}
\aligned
m_{\cA}(\al)&=\int\dfrac{dF^A(x)}{x-\al}
=\dfrac{1}{d(\ga)}\int\dfrac{1}{\tau-\ga}dF(\tau) \\
&=\int\dfrac{g(\al)}{\tau-\al g^2(\al)+\si^2(y-1)g(\al)} dF(\tau) \\
&=\int\dfrac{1}{\dfrac{\tau}{g(\al)}-\al g(\al)+\si^2(y-1)} dF(\tau).
\endaligned
\end{equation}
By plugging in the expression of $g(\al)$,  we see that for all
$\al=\al(z)=z(1+\de(z))$,
$m_{\cA}(\al)$ satisfies
\[
~~~~~~~ m_{\cA}(\al)=\int\dfrac{dF(\tau)}{\dfrac{\tau}{1-y\si^2 m_{\cA}(\al)}-\al(1-y\si^2 m_{\cA}(\al))+\si^2(y-1)}.
\]
It follows from the uniqueness theorem for analytic functions that the above equation holds for all $\al\in\bC^+$ such that   the integral on the right hand side is well-defined.

It remains to show that  the solution to  equation \eqref{eqn:LSD_signal_to_noisy} is unique  in the set~$D_{\cA}$ defined in \eqref{dfn:mA_domain}.
In fact,
suppose otherwise that $m_1\neq m_2\in D_{\cA}$ both satisfy equation \eqref{eqn:LSD_signal_to_noisy}.  Define for $ j=1,2,$
\begin{eqnarray}\label{ga_j}
\ga_j=\al(1-y\si^2m_j)^2-\si^2(y-1)(1-y\si^2m_j)\in \bC^+.
\end{eqnarray}
By \eqref{eqn:LSD_signal_to_noisy} and \eqref{ga_j}, we have
$
m_j=(1-y\si^2m_j)m(\ga_j).
$
Hence
\begin{eqnarray}\label{m_ga_j}
m(\ga_j)=\dfrac{m_j}{1-y\si^2m_j},  ~~{\rm for}~j=1,2.
\end{eqnarray}
which implies that
\begin{eqnarray}\label{mm_ga_j}
1+y\si^2m(\ga_j)=\dfrac{1}{1-y\si^2m_j}, ~~{\rm for}~j=1,2.
\end{eqnarray}
Using \eqref{ga_j} and \eqref{mm_ga_j} we can rewrite $\al$ as
\begin{eqnarray}\label{al_ga_j}
\al&=&\dfrac{\ga_j}{(1-y\si^2m_j)^2}+\dfrac{\si^2(y-1)}{1-y\si^2m_j}\nonumber\\
&=&\ga_j(1+y\si^2m(\ga_j))^2+\si^2(y-1)(1+y\si^2m(\ga_j)), ~~{\rm for}~j=1,2.
\end{eqnarray}
Observing that the Stieltjes transforms $m(\ga_1)$ and $m(\ga_2)$ are uniquely determined by equation \eqref{rem_limit} at points $\ga_1$ and $\ga_2$ respectively, together with \eqref{al_ga_j}, we obtain
\begin{eqnarray*}
m(\ga_j)&=&\int\dfrac{dF^{\cA}(x)}{\dfrac{x}{1+y\si^2m(\ga_j)}-\ga_j(1+y\si^2m(\ga_j))-\si^2(y-1)}\\
&=&(1+y\si^2m(\ga_j)) \cdot m_{\cA}(\al), ~~{\rm for}~j=1,2.
\end{eqnarray*}
Therefore
\[
\dfrac{m(\ga_1)}{1+y\si^2m(\ga_1)}=\dfrac{m(\ga_2)}{1+y\si^2m(\ga_2)},
\]
which implies that $m(\ga_1)=m(\ga_2)$. It then follows from \eqref{m_ga_j} that $m_1=m_2$, a contradiction.
\end{proof}


\section{Conclusion\label{sec:conclusion} }
Motivated by the inference about the spectra of the \ICV  matrix based on high-frequency noisy data,
\begin{compactitem}
\item we establish an asymptotic relationship that describes how the spectral distribution of (true) sample  covariance matrices depends on that of sample covariance matrices constructed from noisy observations;
\item using further a (generalized) connection between the spectral distribution of true sample covariance matrices and that of the population covariance matrix, we propose a two-step procedure to estimate the spectral distribution of ICV for a class  of diffusion processes;
\item  we further develop an alternative estimator which possesses two desirable properties: it eliminates the impact of microstructure noise, and its limiting spectral distribution depends only on that of the ICV through the standard Mar\v{c}enko-Pastur equation;
\item numerical studies demonstrate that our proposed methods can be used to estimate the spectra of the underlying covariance matrix based on noisy observations.
\end{compactitem}


%



\appendix

\renewcommand{\baselinestretch}{1.2}
\setcounter{equation}{0}
\renewcommand{\theequation}{\thesection.\arabic{equation}}

\section{Proofs}\label{appendix:pfs}
\subsection{Proof of Lemmas \ref{lem:tn_exist} -- \ref{ww}}

\begin{proof}[Proof of Lemma \ref{lem:tn_exist}]
Rewrite equation (\ref{eqn:t_n}) as
\begin{equation}\label{re_t}
\aligned
t_n+1 &=
y_n+y_n\int \ \dfrac{z-t_n\sigma_n^2}{x-z+t_n\sigma_n^2} \ dF^{S_n}(x) \\
&= \ y_n\int \ \dfrac{x}{x-z+t_n\sigma_n^2} \ dF^{S_n}(x).
\endaligned
\end{equation}

Firstly, under the assumptions of Theorem \ref{thm:LSD_signal_noise}, by Theorem 1.1 in \cite{Bai2012_noise}, if we let $[a_n,b_n]$ be an interval containing the support of~$F^{\S_n}$, then we may assume that  for all large $n$, $b_n\leq \wt{b}:=b+1$.
Let $\wt{\si}=\si+1$, $\wt{y}=y+1$  and $K_1=2\wt{\si}\sqrt{\wt{y}\wt{b}}$. Since $\si_n\to\si$ and $y_n\to y$, we have for all large $n$ and for all $t\in\sD$,
\begin{equation}\label{eqn:v_t_lower_bd}
\si_n< \wt{\si}, ~ y_n< \wt{y},~~~ {\rm and}~v-t_2\si_n^2\ge v-t_2\wt{\si}^2\geq v/2>0.
\end{equation}

Define
\[
G(t) \ = \ y_n \int \ \dfrac{x}{x-z+t\sigma_n^2} \ dF^{S_n}(x)-1, ~~~~~ {\rm for ~all}~~ t\in \mathscr{D}.
\]
We will apply the Banach fixed point theorem to show that  for all $n$ large enough, there exists a unique point $t^*\in \mathscr{D}$ such that $G(t^*)=t^*$. The desired conclusion then follows.

\ul{Step (i)}: we prove that the mapping $G$ is defined from $\mathscr{D}$ to $\mathscr{D}$. From the definition of $G(t)$ and that $t\in\sD$,   we have
\begin{eqnarray*}
\Im(G(t))&=&y_n\int_{a_n}^{b_n}\frac{x(v-t_2\sigma_n^2)}{(x-u+t_1\sigma_n^2)^2+(v-t_2\sigma_n^2)^2} \ dF^{S_n}(x)\\
&=&\dfrac{y_n}{v-t_2\sigma_n^2}\int_{a_n}^{b_n}\dfrac{x}{1+\left(\frac{x-u+t_1\sigma_n^2}{v-t_2\sigma_n^2}\right)^2} \ dF^{S_n}(x),
\end{eqnarray*}
and hence for all $n$ large enough,
\begin{eqnarray*}
0<\Im(G(t))
< \dfrac{\wt{y}\wt{b}}{v-t_2\wt{\si}^2}
\le \dfrac{v}{2\wt{\si}^2},
\end{eqnarray*}
where the last inequality follows from the fact that for any  $z\in\bC_1$,
\[
\dfrac{\wt{y}\wt{b}}{v-t_2\wt{\si}^2}-\dfrac{v}{2\wt{\si}^2}
\le \dfrac{2\wt{y}\wt{b}}{v}-\dfrac{v}{2\wt{\si}^2}
= \dfrac{4\wt{\si}^2\wt{y}\wt{b}-v^2}{2\wt{\si}^2v}
\le 0.
\]

\ul{Step (ii)}: we shall show that $G: \mathscr{D} \to \mathscr{D}$ is a contraction mapping. In fact, for any two points $t$, $t^{\prime} \ \in \mathscr{D}$,
\begin{eqnarray*}
G(t)-G(t^{\prime})&=& y_n\int_{a_n}^{b_n} \
\left(\dfrac{x}{x-z+t\sigma_n^2}-\dfrac{x}{x-z+t^{\prime}\sigma_n^2}\right)
\ dF^{S_n}(x)\\
&=& (t-t^{\prime}) \ y_n\sigma_n^2 \int_{a_n}^{b_n} \
\dfrac{-x}{(x-z+t\sigma_n^2)(x-z+t^{\prime}\sigma_n^2)}
\ dF^{S_n}(x)\\
&:=& (t-t^{\prime}) \ q(t,t^{\prime}).
\end{eqnarray*}

Using Cauchy-Schwartz inequality we get that almost surely for all $n$ large enough, for all $t,t^{\prime}\in\mathscr{D}$,
\[
\aligned
&|q(t,t^{\prime})|\\
\le& \left(\int_{a_n}^{b_n} \ \dfrac{\sigma_n^2 y_n x}{|x-z+t\sigma_n^2|^2} \ dF^{S_n}(x)\right)^{1/2}
\left(\int_{a_n}^{b_n} \ \dfrac{\sigma_n^2 y_n x}{|x-z+t^{\prime}\sigma_n^2|^2} \ dF^{S_n}(x)\right)^{1/2}\\
\le& \left(\dfrac{\sigma_n^2y_nb_n}{(v-\Im(t)\sigma_n^2)^2}\right)^{1/2}
\left(\dfrac{\sigma_n^2y_nb_n}{(v-\Im(t^{\prime})\sigma_n^2)^2}\right)^{1/2}\\
<&\left(\dfrac{\wt{\si}^2\wt{y}\wt{b}}{(v-\Im(t)\wt{\si}^2)^2} \right)^{1/2} \left(\dfrac{\wt{\si}^2\wt{y}\wt{b}}{(v-\Im(t^{\prime})\wt{\si}^2)^2} \right)^{1/2}\\
\le&\left(\dfrac{\wt{\si}^2\wt{y}\wt{b}}{v^2/4} \right)^{1/2} \left(\dfrac{\wt{\si}^2\wt{y}\wt{b}}{v^2/4} \right)^{1/2},
\endaligned
\]
which is strictly smaller than 1 when $z\in\bC_1$.
Therefore the mapping $G$ is contractive  in $\mathscr{D}$, and the Banach fixed point theorem guarantees the existence of a unique solution to equation (\ref{eqn:t_n}).
\end{proof}


\begin{proof}[Proof of Lemma \ref{lem:t_properties}]
Taking imaginary parts on both sides of equation (\ref{eqn:t}) yields
\begin{eqnarray}\label{expression_t2}
t_2= y \int_a^b \dfrac{x(v-t_2\si^2)}{|x-z+t\si^2|^2} \ dF(x).
\end{eqnarray}
It is then straightforward to verify that
$t_2>0$ and $v-t_2\si^2>0$.
Furthermore, since
\begin{equation}\label{sim_t2}
\aligned
t_2=&\dfrac{y}{v-t_2\si^2} \ \int_a^b \dfrac{x}{1+\(\dfrac{x-u+t_1\si^2}{v-t_2\si^2}\)^2} \ dF(x)\\
\le&\dfrac{yb}{v-t_2\si^2},
\endaligned
\end{equation}
when $v\geq 2\si\sqrt{yb}$, we have
\begin{equation}\label{eq:t_2_possibilities}
{\rm either} ~~~~~
t_2 \ \geq  \ \dfrac{ \ v \ + \ \sqrt{v^2-4\sigma^2 yb}}{2\sigma^2} ~~~~~ \textrm{or} ~~~~~
t_2 \ \le \ \dfrac{ \ v \ - \ \sqrt{v^2-4\sigma^2 yb}}{2\sigma^2}.
\end{equation}

Denote $w=u-t_1\si^2$ and $\th=v-t_2\si^2$.
By \eqref{sim_t2}, if $F$ admits a bounded density $f$ and possibly a point mass at 0, then
\begin{eqnarray*}
t_2&=&\dfrac{y}{\theta} \ \int_a^b \ \dfrac{x}{ 1+\(\dfrac{x-w}{\theta}\)^2 } \  f(x) \ dx\\
&=& y \ \int_{\frac{a-w}{\theta}}^{\frac{b-w}{\theta}} \ \dfrac{ \ w+\theta l \ }
{ \ 1+l^2 \ } \ f(w+\theta l) \ dl.
\end{eqnarray*}
Since $f(w+\theta l)$ is bounded  and $x=w+\theta l \ \in(a,b)$ when $l\in (\frac{a-w}{\th},\frac{b-w}{\th})$,  there exists a constant $C$ such that
\[
t_2 \ \le \ C\int_{\frac{a-w}{\theta}}^{\frac{b-w}{\theta}} \
\dfrac{1}{1+l^2} \ dl \  \le \ C\int_{-\infty}^{+\infty} \ \dfrac{dl}{1+l^2} \ = \ C \ \pi.
\]
This, combined with \eqref{eq:t_2_possibilities}, implies that
\begin{eqnarray}\label{eqn:t2_upper_bd}
t_2 \ \le \ \dfrac{v \ - \ \sqrt{v^2-4\sigma^2 yb}}{2\sigma^2},\q\mbox{for all } v \mbox{ large enough}.
\end{eqnarray}
In particular,  uniformly in $u$,
\begin{equation}\label{eqn:t2_to_0_as_v_infty}
t_2\to 0 \mbox{ and } v-t_2\sigma^2\to\infty,\q \mbox{ as }v\to\infty.
\end{equation}

Moreover, from (\ref{eqn:t}) we get
\[
t+1 \ = \ y \ + \ y\int \ \dfrac{z-t\sigma^2}{ \ x-z+t\sigma^2 \ } \ dF(x) \
=y\int\dfrac{x}{ \ x-z+t\sigma^2 \ }  \  dF(x).
\]
Thus as $v\to\infty$,
\begin{eqnarray*}
|t_1+1| \le |t+1|
\le  y\int_a^b\dfrac{x}{ \ \Im(x-z+t\sigma^2) \ } \ dF(x)
\le  \dfrac{C}{v-t_2\sigma^2} \to ~ 0,
\end{eqnarray*}
also uniformly in $u$.
\end{proof}


\begin{proof}[Proof of Lemma \ref{lem:t_exist}]
Firstly, by the same proof as for Lemma \ref{lem:tn_exist}, one can show that for all $z=u+iv$ with $v\geq K_1$,  equation \eqref{eqn:t} admits a unique solution in $\sD$ defined in \eqref{dfn:D}. Moreover, by Lemma \ref{lem:t_properties}, if $t=t_1+it_2$ solves \eqref{eqn:t}, then $t_2>0$; furthermore, we can find a constant $K_2$ such that
if $t$ solves~\eqref{eqn:t} for~$z$ with $v(=\Im(z))\geq K_2,$ then we must have $t_2\le{v}/{(2\wt{\si}^2)}$. The latter two properties imply that for all  $z$ with $v\geq K_2,$ the solution to \eqref{eqn:t} must lie in $\sD$. Redefining $K_2=\max(K_1,K_2)$ if necessary, we see that for all $z\in\bC_2$,  \eqref{eqn:t} admits a unique solution.
\end{proof}

\begin{proof}[Proof of Lemma \ref{lem:t_analytic}]
Define a function $G$ as
\[
G(z,t)=t-(y-1)-y(z-t\si^2)m(z-t\si^2), \ (z,t)\in \bC^+\times\bC^+ \mbox{ with } \Im(z-t\si^2)>0.
\]
That $t(z)$ solves \eqref{eqn:t} is equivalent to $G(z,t(z))=0$.
Write $z=u+iv$ and $t=t_1+it_2$.
By taking the partial derivative with respect to $t$ we get
\[
\dfrac{\partial G}{\partial t}=1+y\si^2
\int\dfrac{x}{\(x-(z-t\si^2)\)^2} ~ dF(x).
\]
Note that
\[
\left|\int\dfrac{x}{\(x-(z-t\si^2)\)^2} ~ dF(x)\right| ~
\le ~ \dfrac{b}{(v-t_2\si^2)^2},
\]
which, by \eqref{eqn:t2_to_0_as_v_infty}, goes to zero as $v\to\infty$.
Thus there exists a constant $K_3>0$ such that for all $z\in \bC_3$, ${\partial G}/{\partial t}(z,t(z))\neq 0$. It follows from the implicit function theorem and Lemma \ref{lem:t_properties} that $t=t(z)$ is  analytic on $\bC_3$.
\end{proof}

\begin{proof}[Proof of Lemma \ref{lem:tlimit}]
Write $z=u+iv$ and $t_n=t_{n1}+it_{n2}$.
Similar to the proof of Lemma \ref{lem:t_properties}, taking imaginary parts on both sides of equation (\ref{eqn:t_n}), one can easily show that
$t_{n2}>0$ and $v-t_{n2}\si_n^2>0$.

Next we show that $\{t_n\}$ is tight, in other words, for any $\eps>0$, there exists $C>0$, such that for all $n$ large enough, $P\left(|t_n|>C\right)<\eps$.
Since $0<t_{n2}<v/\sigma_n^2$, it suffices to show that $\{|t_{n1}|\}$ is tight.

Let $\ul{\S}_n= \dfrac{1}{n}(\A_n+\si_n\pvep_n)^T(\A_n+\si_n\pvep_n)$, and let $\ul{m}_n(z)$ be the Stieltjes transform of the ESD $F^{\ul{\S}_n}$. The spectra of $\S_n$ and $\ul{\S}_n$ differ by $|p-n|$ number of zero eigenvalues,  hence
$
F^{\ul{\S}_n}=(1-y_n) I_{[0,\infty)} +y_n F^{\S_n},
$
and
\begin{eqnarray}
\ul{m}_n(z)=-\dfrac{1-y_n}{z}+y_n m_n(z).
\label{mmn}
\end{eqnarray}
Thus equation (\ref{eqn:t_n}) can also be expressed as
\begin{eqnarray*}
t_n&=&y_n-1+y_n(z-t_n\si_n^2)m_n(z-t_n\si_n^2) \nonumber\\
&=&(z-t_n\si_n^2)\ul{m}_n(z-t_n\si_n^2).
\end{eqnarray*}
Taking real parts on both sides yields
\[
\Re(t_{n})=\int\dfrac{x(u-\Re(t_{n})\si_n^2)-|z-t_n\si_n^2|^2}{|x-z+t_n\si_n^2|^2} \ dF^{\ul{\S}_n}(x).
\]
Solving for $\Re(t_{n})$ yields
\begin{equation}\label{re_tn}
\Re(t_{n}) = \dfrac{\displaystyle\int \dfrac{xu-|z-t_{n}\si_{n}^2|^2}{|x-z+t_{n}\si_{n}^2|^2}\ dF^{\ul{\S}_{n}}(x)}
{1+\si_{n}^2\displaystyle\int \dfrac{x}{|x-z+t_{n}\si_{n}^2|^2}\ dF^{\ul{\S}_{n}}(x)}
\end{equation}

Now suppose that $\{t_{n1}=\Re(t_n)\}$ is not tight, then with positive probability, there exists a subsequence $\{n_k\}$ such that $|\Re(t_{n_k})|\to\infty$.
By \eqref{re_tn}, we have
\begin{eqnarray*}
|\Re(t_{n_k})|
&\le& \int_{a_{n_k}}^{b_{n_k}} \dfrac{x|u|+|z-t_{n_k}\si_{n_k}^2|^2}{|x-z+t_{n_k}\si_{n_k}^2|^2} ~ dF^{\ul{\S}_{n_k}}(x).
\end{eqnarray*}
However, as $k$ goes to infinity, if  $|\Re(t_{n_k})|\to\infty$, since $\{F^{\ul{\S}_{n_k}}\}$ is tight and $\si_{n_k}\to\si>0$, one gets that the RHS goes to 1. This contradicts the supposition that $|\Re(t_{n_k})|\to\infty$.

Next, for any convergent subsequence $\{t_{n_k}\}$ in  set $\sD$, by \eqref{eqn:v_t_lower_bd}, for all~$n_k$ large enough, we have $v-\Im(t_{n_k})\si_{n_k}^2\geq v/2$. We can then apply the dominated convergence theorem  to conclude that the limit point of $\{t_{n_k}\}$ must satisfy equation \eqref{eqn:t}.
By Lemma \ref{lem:t_exist}, the solution is unique, hence the whole sequence $\{t_n\}$ converges to the unique solution to equation \eqref{eqn:t}.
\end{proof}

\begin{proof}[Proof of Lemma \ref{betabound}]
Write $t_n=t_{n1}+it_{n2}$. Note that
\begin{eqnarray*}
&&\Im\left\{(z-t_n\sigma_n^2){\pmb\xi}_j^T{\bf R}_{nj}^{-1}{\pmb\xi}_j\right\}\\
&&=\Im\left\{{\pmb\xi}_j^T\left(\dfrac{1}{z-t_n\sigma_n^2}{\bf S}_{nj}-\I\right)^{-1}{\pmb\xi}_j\right\}\\
&&=\dfrac{1}{2i}{\pmb\xi}_j^T\left[\left(\dfrac{1}{z-t_n\sigma_n^2}{\bf S}_{nj}-\I\right)^{-1}
-\left(\dfrac{1}{\overline{z-t_n\sigma_n^2}}{\bf S}_{nj}-\I\right)^{-1}\right]{\pmb\xi}_j\\
&&=\dfrac{v-t_{n2}\sigma_n^2}{|z-t_n\sigma_n^2|^2} \ {\pmb\xi}_j^T\left(\dfrac{1}{z-t_n\sigma_n^2}{\bf S}_{nj}-\I\right)^{-1}
{\bf S}_{nj}\left(\dfrac{1}{\overline{z-t_n\sigma_n^2}}{\bf S}_{nj}-\I\right)^{-1}{\pmb\xi}_j \\
&&\geq 0,
\end{eqnarray*}
where the last inequality is due to Lemma \ref{lem:tlimit}. Therefore,
\begin{eqnarray*}
|\beta_j|&=& \dfrac{|z-t_n\sigma_n^2|}{ \ |(z-t_n\sigma_n^2)(1+{\pmb\xi}_j^T{\bf R}_{nj}^{-1}{\pmb\xi}_j)| \ } \\
&\le& \dfrac{|z-t_n\sigma_n^2|}{ \ |\Im\{(z-t_n\sigma_n^2)(1+{\pmb\xi}_j^T{\bf R}_{nj}^{-1}{\pmb\xi}_j)\}| \ }  \\
&\le&   \dfrac{ \ |z-t_n\sigma_n^2| \ }{ \ v-t_{n2}\sigma_n^2 \ }.
\end{eqnarray*}

\end{proof}


\begin{proof}[Proof of Lemma  \ref{Bbound}]
Any eigenvalue of $\B_n=\dfrac{1}{n(1+\delta_n)}{\bf A}_n{\bf A}_n^T-z\I$
can be expressed as $\la^B=\dfrac{1}{1+\de_n}\la-z$, where $\la$ is an eigenvalue of $\dfrac{1}{n}\A_n\A_n^T$. We have
\[
|\lambda^B|\geq |\Im(\lambda^B)|=\left|\dfrac{\Im(\delta_n)}{|1+\delta_n|^2}\lambda+v\right|\geq v,
\]
where the last step follows from the fact that
$
\Im(\delta_n)=y_n\sigma_n^2\Im(m_n(z-t_n\sigma_n^2)) \ >0,
$
thanks to Lemma \ref{lem:tlimit}.
\end{proof}

\begin{proof}[Proof of Lemma \ref{rv4}]
We shall only establish the inequality for $\eta_j(=\ga_j)$; the other two variables $\hat{\eta}_j$ and $\hat{\gamma}_j$ can be handled in a similar way by using Lemma \ref{Bbound}.

Since for any Hermitian matrix $\A$ and $z\in\bC^+$, $\|(\A-z\I)^{-1}\|\le 1/\Im(z)$, we have by Lemma \ref{lem:tlimit}  that
\begin{eqnarray}\label{bound_RRj}
~~~~\|{\bf R}_n^{-1}\|\le \dfrac{1}{(v-t_{n2}\sigma_n^2)}, ~~~{\rm and}~~~
\max_{1\le j\le n}\|{\bf R}_{nj}^{-1}\|\le \dfrac{1}{(v-t_{n2}\sigma_n^2)}.
\end{eqnarray}

Recall that ${\bf b}_j=\sigma_n \pep_j$, and $\pep_j$ satisfies   $E(\pep_j\pep_j^T)=\I$.
The strengthened assumption \eqref{asm:A_bd} implies that $|{\bf a}_j|\le C\sqrt{n\log n}$.
Note also that $\pep_j$ is independent of ${\bf R}_{nj}^{-1}$ and ${\bf a}_j$. Moreover,  using Lemma \ref{xtrx} in  Appendix \ref{appendix:lemmas},
assumption \eqref{asm:eps_bd} and \eqref{bound_RRj}, we get
\begin{eqnarray*}
E|\eta_j|^4&=&\dfrac{1}{n^4}E|{\bf a}_j^T{\bf R}_{nj}^{-1}{\bf b}_j|^4
=\dfrac{\sigma_n^4}{n^4}E|{\bf a}_j^T{\bf R}_{nj}^{-1}{\bf \pep}_j|^4\\
&=&\dfrac{\sigma_n^4}{n^4}E\left({\bf \pep}_j^T\bar{{\bf R}}_{nj}^{-1}{\bf a}_j{\bf a}_j^T{\bf R}_{nj}^{-1}{\bf \pep}_j\right)^2\\
&\le&\dfrac{2\sigma_n^4}{n^4}\left(
E|{\bf \pep}_j^T\bar{{\bf R}}_{nj}^{-1}{\bf a}_j{\bf a}_j^T{\bf R}_{nj}^{-1}{\bf \pep}_j-{\bf a}_j^T{\bf R}_{nj}^{-1}\bar{{\bf R}}_{nj}^{-1}{\bf a}_j|^2
+E({\bf a}_j^T{\bf R}_{nj}^{-1}\bar{{\bf R}}_{nj}^{-1}{\bf a}_j)^2
\right)\\
&\le&\dfrac{C}{n^4}E|\ep_{11}|^4\times E\left({\bf a}_j^T{\bf R}_{nj}^{-1}\bar{{\bf R}}_{nj}^{-1}{\bf a}_j\right)^2\\
&\le& \dfrac{C(\log n)^6}{n^2(v-t_{n2}\si_n^2)^4}.
\end{eqnarray*}
\end{proof}

\begin{proof}[Proof of Lemma \ref{ww}]
Using \eqref{asm:eps_bd}, \eqref{bound_RRj}, Lemmas \ref{Bbound} and \ref{xtrx}, and Lemma 2.6 in \cite{SB95}, we obtain
\begin{eqnarray*}
&&E\left|w_j-\dfrac{\sigma_n^2}{n}{\rm tr}({\bf R}_n^{-1})\right|^4\\
&\le& C\(E\left|\dfrac{\sigma_n^2}{n}\pep_j^T{\bf R}_{nj}^{-1}\pep_j-\dfrac{\sigma_n^2}{n}{\rm tr}({\bf R}_{nj}^{-1})\right|^4
+E\left|\dfrac{\sigma_n^2}{n}{\rm tr}({\bf R}_{nj}^{-1}-{\bf R}_n^{-1})\right|^4\)\\
&\le&\dfrac{C}{n^4}\left|E\((\log n)^4{\rm tr}\({\bf R}_{nj}^{-1}\bar{{\bf R}}_{nj}^{-1}\)\)^2
+(\log n)^8 E{\rm tr}\({\bf R}_{nj}^{-1}\bar{{\bf R}}_{nj}^{-1}\)^2\right|
+\dfrac{C}{n^4(v-t_{n2}\si_n^2)^4}\\
&\le&\dfrac{C(\log n)^8}{n^2(v-t_{n2}\sigma_n^2)^4}.
\end{eqnarray*}
The result for $\hat{w}_j$ can be proved similarly.
\end{proof}


%


\subsection{Proof of Theorem \ref{thm:main}}

\begin{proof}[Proof of  Theorem \ref{thm:main}]
The convergence of $F^{\ICV}$ follows easily from Assumption  \eqref{asm:Sigma_conv} and the fact that
\[
F^{\ICV}(x) \ = \ F^{\bpSi} \left(\frac{x}{\int_0^1\ga_t^2\,dt}\right) ~~~~~~~ {\rm for~all~} x\geq 0.
\]

Next, by Theorem 3.2 in \cite{DS2007b}, the assumption that $F$ has a bounded support implies that $H$ has a bounded support as well. Thus Assumption (A.iii$'$) in \cite{ZL11} that $H$ has a finite second moment is satisfied.

We proceed to show the convergence of $\cA_m$.
As discussed in Subsection~\ref{sssec:inf_PAV}, if
the diffusion process $({\bf X}_t)$ belongs to Class~$\mathcal{C}$, the drift process ${\pmb \mu}_t\equiv 0$, and $(\gamma_t)$ is independent of $({\bf W}_t)$, then conditional on $\{\gamma_t\}$, we have
\begin{eqnarray}
\Delta\overline{{\bf X}}_{2i} \ \stackrel{d}{=} \
\sqrt{w_i}
\ \breve{{\pmb\Sigma}}^{1/2} \ {\bf Z}_i,
\label{rx}
\end{eqnarray}
where $w_i$ is as in \eqref{wie}
and is independent of ${\bf Z}_i$, and ${\bf Z}_i=(Z_i^{1},\ldots,Z_i^{p})^T$ consists of independent standard normals.
Hence, $\cA_m$ has the same distribution as  $\wt{\cA}_m$ defined as
\begin{eqnarray}
\wt{\cA}_m &=& \sum_{i=1}^m w_i \ \breve{\pSi}^{1/2} \Z_i \Z_i^T \breve{\pSi}^{1/2}.
\label{Sm}
\end{eqnarray}

{\bf Claim 1}. Without loss of generality, we can assume that the drift process ${\pmb\mu}_t\equiv 0$ and $(\gamma_t)$ is independent of $({\bf W}_t)$.

In fact, firstly whether the drift term $({\pmb\mu}_t)$ vanishes or not does not affect the LSD of $\cA_m$.
To see this, note that $\De\ol{\X}_{2i}=\wt{\V}_i+\wt{\Z}_i$, where
\begin{eqnarray}\label{Vi}
\wt{\V}_i= \sum_{|j|<k}\(1-\dfrac{|j|}{k}\)\int_{((2i-1)k+j-1)/n}^{((2i-1)k+j)/n} {\pmb\mu}_t\, dt,
\end{eqnarray}
and
\begin{eqnarray}\label{Zi}
\wt{\Z}_i=\pLa \cdot \sum_{|j|<k}\(1-\dfrac{|j|}{k}\) \int_{((2i-1)k+j-1)/n}^{((2i-1)k+j)/n} \ga_t\, d\W_t.
\end{eqnarray}
Since all the entries of $\wt{\V}_i$ are of order $O(k/n)=o(1/\sqrt{p})$, by  Lemma \ref{lemma1} in  Appendix \ref{appendix:lemmas},
$\cA_m$ and
$\sum_{i=1}^m \wt{\Z}_i (\wt{\Z}_i)^T$
have the same LSD.

Next, by the same argument as in the beginning of Proof of Theorem 1 in \cite{ZL11}, we can assume without loss of generality that $(\ga_t)$ is independent of $(\W_t)$.
It follows that $\cA_m$ and $\wt{\cA}_m$ have the same LSD.

{\bf Claim 2}.  $\max_{i,n} (mw_i)$ is bounded, and  there exists a piecewise continuous process $(w_s)$ with finitely many jumps such that
\begin{eqnarray}\label{eqn:wt_conv}
\lim_{n\to\infty}\sum_{i=1}^m\int_{((2i-2)k)/n}^{2ik/n} \ |mw_i-w_s| \ ds=0.
\end{eqnarray}

In fact, using the boundedness of $(\ga_t)$ assumed in \eqref{asm:gamma_conv}   and that $k=\lfloor\theta\sqrt{n}\rfloor$, one can easily show that  $\max_{i,n} (mw_i)$ is bounded.

Next we show that \eqref{eqn:wt_conv} is satisfied for
$w_s=(\ga_s^*)^2/3$.
Define
\begin{eqnarray*}
w_i^*&=&
\sum_{|j|<k} \(1-\dfrac{|j|}{k}\)^2 ~
 \int_{((2i-1)k+j-1)/n}^{((2i-1)k+j)/n} (\ga_t^*)^2 \, dt.
\end{eqnarray*}

Suppose that $(\ga_t^*)$ has $J$ jumps for $J\geq 1$. For each $j=1,\ldots,J$, there exists an $\ell_j$ such that the $j$th jump falls in the interval $[{(2\ell_j-2)k}/{n}, \ {(2\ell_j k)}/{n})$. Then
\begin{eqnarray*}
&& \sum_{i=1}^m\int_{((2i-2)k)/n}^{2ik/n} \ |mw_i-w_s| \ ds\\
&=&\sum_{\ell_j\in\{\ell_1,\ldots,\ell_J\}}
\int_{((2\ell_j-2)k)/n}^{2\ell_jk/n} \ |mw_{\ell_j}-w_s| \ ds\\
&&+\sum_{i\not\in\{\ell_1,\ldots,\ell_J\}}
\int_{((2i-2)k)/n}^{2ik/n} \ |mw_i-w_s| \ ds\\
&:=&\De_1+\De_2.
\end{eqnarray*}

Since  $(mw_{\ell_j})$ and $|\ga_s^*|$ are both bounded, for any $\varepsilon>0$ and for $n$ large enough, we have
\[
|\De_1|\le \dfrac{2k}{n}\cdot JC < \varepsilon.
\]

For the second term $\De_2$, since $(\ga_t^*)$ is continuous in $[{(2i-2)k}/{n},{(2ik)/{n}}]$ when $i\not\in\{\ell_1,\ldots,\ell_J\}$, and by (A.vi), $(\ga_t)$ uniformly converges to $(\ga_t^*)$,  for any $\varepsilon>0$ and for $n,p$ large enough, we have
\[
|\ga_t^*-\ga_{(2i-2)k/n}^*|<\varepsilon \mbox{ for all } t\in \left[\frac{(2i-2)k}{n},{\frac{2ik}{n}}\right], \mbox{ and } |\ga_t-\ga_t^*|<\varepsilon\mbox{ for all } t.
\]
Moreover, since $|\ga_t|\le C_2$, for all large $n$ we have
\[\aligned
&|\De_2|\\
\le&
\sum_i\int_{(2i-2)k/n}^{2ik/n} |mw_i-mw_i^*|ds \\
&+\sum_i\int_{(2i-2)k/n}^{2ik/n}
\left|mw_i^*-  \frac{m}{n}  \(\ga_{(2i-2)k/n}^*\)^2 \cdot
 \sum_{|j|<k} \(1-\dfrac{|j|}{k} \)^2
\right| ds\\
&+\sum_i\int_{(2i-2)k/n}^{2ik/n}
\left| \frac{m}{n}   \(\ga_{(2i-2)k/n}^*\)^2\cdot
 \sum_{|j|<k} \(1-\dfrac{|j|}{k} \)^2
-\dfrac{(\ga_s^*)^2}{3}
\right| ds\\
\le&
m^2\cdot\dfrac{2k}{n}\cdot \dfrac{1}{k^2}
\(2k(k+1)(2k+1)/6-k^2\)(2C_2\varepsilon)\\
&+
m^2\cdot\dfrac{2k}{n}\cdot \dfrac{1}{k^2}
\(2k(k+1)(2k+1)/6-k^2\)(2C_2\varepsilon)\\
&+\frac{m}{nk^2}
\(2k(k+1)(2k+1)/6-k^2\)\cdot
\sum_i\int_{(2i-2)k/n}^{2ik/n}
\left|\(\ga_{(2i-2)k/n}^*\)^2-(\ga_s^*)^2\right|^2 ds\\
&+{C_2^2}\cdot m\cdot \dfrac{2k}{n}\cdot
\(
\dfrac{m}{n k^2}
\(2k(k+1)(2k+1)/6-k^2\)-
\dfrac{1}{3}
\)\\
\le& C\varepsilon.
\endaligned
\]
This completes the proof of \eqref{eqn:wt_conv}.

Since $F^{\breve{\pSi}}\to\breve{H}$ and $\breve{H}(x/\zeta)=H(x)$ for $x\geq 0$, using Claim 2 and applying Theorem 1 in \cite{ZL11} we conclude that
the ESD of $\cA_m$ converges to $F^{\cA}$ whose Stieltjes transform satisfies
\begin{eqnarray}
m_{\cA}(z)&=&-\dfrac{1}{z}\int\dfrac{1}{\tau M(z)+1}d\breve{H}(\tau) \nonumber\\
&=&-\dfrac{1}{z}\int\dfrac{\zeta}{\tau M(z)+\zeta}dH(\tau),
\label{m_A}
\end{eqnarray}
where $M(z)$, together with another function $\wt{m}(z)$, uniquely solve the following equations in $\bC^+\times \bC^+$
\begin{equation}\label{mpw}
\left\{
\begin{array}{lll}
M(z) &=& -\dfrac{1}{z} \displaystyle\int_0^1 \dfrac{w_s}{1+y\wt{m}(z) w_s}ds,  \\
\wt{m}(z) &=& -\dfrac{1}{z} \displaystyle\int \dfrac{\tau}{\tau M(z)+1} d\breve{H}(\tau)
{=-\dfrac{1}{z} \displaystyle\int \dfrac{\tau}{\tau M(z)+\zeta} dH(\tau)}.
\end{array}
\right.
\end{equation}

\end{proof}


\subsection{Proof of  Theorem \ref{thm:B_n}}

Note that the convergence of the ESD of $\ICV$ has been proved in Theorem \ref{thm:main}. The rest of Theorem \ref{thm:B_n} is a direct consequence of the following two convergence results.

\begin{lem}\label{pthm3_a}
Under the assumptions of Theorem \ref{thm:B_n}, we have
\[
\lim_{p\to\infty} 3 \dfrac{\sum_{i=1}^m |\De\ol{\Y}_{2i}|^2}{p}
=\zeta, \q \mbox{almost surely}.
\]
\end{lem}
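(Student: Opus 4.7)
The plan is to decompose
\[
\frac{3}{p}\sum_{i=1}^m |\De\ol{\Y}_{2i}|^2 = \frac{3}{p}\sum_{i=1}^m|\De\ol{\X}_{2i}|^2 + \frac{6}{p}\sum_{i=1}^m \De\ol{\X}_{2i}^T\De\ol{\pvep}_{2i} + \frac{3}{p}\sum_{i=1}^m |\De\ol{\pvep}_{2i}|^2,
\]
and to show that the signal term converges almost surely to $\zeta$ while the cross and noise terms vanish.

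For the signal term, I would first absorb the drift via Lemma \ref{lemma1} (each drift contribution to $\De\ol{\X}_{2i}$ is $O(k/n)$ entrywise, hence negligible after squaring, summing and dividing by $p$), and then reduce to the case where $(\gamma_t)$ is independent of $(\W_t)$ by the same argument as Claim~1 in the proof of Theorem \ref{thm:main}, using Assumption \eqref{asm:leverage_2}. Conditionally on $(\gamma_t)$ we then have $\De\ol{\X}_{2i}\stackrel{d}{=}\sqrt{w_i}\,\breve{\pSi}^{1/2}\Z_i$ with $w_i$ as in \eqref{wie} and $\Z_i$ independent standard normals, giving $E[\sum_i|\De\ol{\X}_{2i}|^2\,|\,\gamma]=p\sum_iw_i$; Claim~2 in the proof of Theorem \ref{thm:main} then yields $\sum_iw_i\to \zeta/3$ almost surely. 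Concentration follows because the conditional variance is $2\sum_iw_i^2\tr(\breve{\pSi}^2)\le 2(\max_iw_i)(\sum_iw_i)\|\breve{\pSi}\|\tr(\breve{\pSi})=O(p^{1+\delta_2}\cdot k/n)$ under Assumption \eqref{asm:Sigma_bdd_2}; dividing by $p^2$ and using $k/n\asymp 1/p$ (from $p\asymp m=\lfloor n/(2k)\rfloor$) produces a variance summable in $p$, so Chebyshev plus Borel--Cantelli upgrades to almost-sure convergence to $\zeta/3$, and the factor $3$ gives $\zeta$.

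For the noise term, write $\De\ol{\pvep}_{2i}=k^{-1}\sum_{\ell=0}^{k-1}(\pvep_{(2i-1)k+\ell}-\pvep_{(2i-2)k+\ell})$. Assumption \eqref{asm:eps_general} yields $\sum_r \rho^j(r)<\infty$ for each coordinate, and hence summable autocovariances, so $E|\De\ol{\vep}_{2i}^{\,j}|^2=O(1/k)$ uniformly in $i,j$; summing gives
\[
\frac{1}{p}E\sum_{i=1}^m|\De\ol{\pvep}_{2i}|^2=O(m/k)=O(n^{1-2\alpha})\to 0
\]
since $\alpha>1/2$. For almost-sure convergence I would invoke a Rosenthal-type moment inequality for $\rho$-mixing sequences, using the bounded $4\ell$th moments in \eqref{asm:eps_general} to control $E\bigl|\sum_i(|\De\ol{\pvep}_{2i}|^2-E|\De\ol{\pvep}_{2i}|^2)\bigr|^{2\ell}$, and then close via Markov and Borel--Cantelli; the precise lower threshold $\alpha\ge (3+\ell)/(2\ell+2)$ in Assumption \eqref{asm:ym_conv} is what makes the resulting tail series summable in $p$. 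The cross term then collapses by Cauchy--Schwarz:
\[
\left|\frac{1}{p}\sum_i\De\ol{\X}_{2i}^T\De\ol{\pvep}_{2i}\right| \le \sqrt{\frac{1}{p}\sum_i|\De\ol{\X}_{2i}|^2}\cdot\sqrt{\frac{1}{p}\sum_i|\De\ol{\pvep}_{2i}|^2}\to \sqrt{\zeta/3}\cdot 0=0.
\]

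The main obstacle I anticipate is the almost-sure control of the noise term: although its mean tends to zero as soon as $\alpha>1/2$, the concentration must survive rather general cross-sectional and temporal dependence (and, as emphasized in the text, possibly even dependence between the noise and the price process), so calibrating the Rosenthal-type inequality so that the tail probabilities are summable in $p$ is the technical heart of the proof and is exactly why the delicate lower bound $\alpha\ge (3+\ell)/(2\ell+2)$ is imposed in Assumption \eqref{asm:ym_conv}.
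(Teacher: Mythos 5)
Your decomposition into signal, cross, and noise terms is exactly the paper's, and your cross-term estimate via Cauchy--Schwarz is also exactly what the paper does (``\eqref{subpav_error_negligible} follows from \eqref{lim_eps} and \eqref{subpav_conv}''). Where you part ways is in how the signal and noise pieces are controlled, and the difference is mainly organizational: the paper proves Lemma~\ref{pthm3_a} \emph{after} Proposition~\ref{pthm3}, so it can simply invoke two facts already established there --- the per-coordinate bound~\eqref{lim_eps} (i.e.\ $\max_{i,j}\sqrt{p}\,|\De\ol{\vep}_{2i}^{\,j}|\to 0$ a.s.), which instantly kills the noise term once $p/m\to y$, and the uniform concentration~\eqref{eq:mv_normal_norm} (i.e.\ $\max_i\bigl|\,|\pLa\Z_i|^2/p - 1\,\bigr|\to 0$ a.s.), which combined with the Riemann-sum limit $\sum_i w_i\to\zeta/3$ gives the signal term in a single line. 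You instead re-derive both ingredients from scratch, which is more work but structurally sound.

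Two caveats worth flagging. First, for the signal term you ``reduce to $(\gamma_t)$ independent of $(\W_t)$ by Claim~1''; Claim~1 in the paper is stated under Assumption~\eqref{asm:leverage} in the context of Theorem~\ref{thm:main}, whereas here the relevant hypothesis is Assumption~\eqref{asm:leverage_2}, and the paper's proof of~\eqref{eq:mv_normal_norm} handles the $O(p^{\de_1})$ correlated coordinates \emph{directly} rather than via such a reduction. The reduction is still morally fine because $\#\mathcal{I}_p = O(p^{\de_1}) = o(p)$, but you should justify it in the same way, and in any case your conditional Chebyshev bound (variance $O(p^{1+\de_2}k/n)=O(p^{\de_2})$, giving tail $O(p^{\de_2-2})$ summable since $\de_2<1/2$) is correct once that reduction is made. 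Second, for the noise term your plan to apply a Rosenthal-type inequality to $\sum_i(|\De\ol{\pvep}_{2i}|^2-E|\De\ol{\pvep}_{2i}|^2)$ is more delicate than the paper's route: since cross-sectional dependence among the coordinates of $\pvep_i$ is left completely unrestricted, bounding moments of the $p$-dimensional sums $|\De\ol{\pvep}_{2i}|^2$ is awkward, which is precisely why the paper works per coordinate $(i,j)$, proves $\max_{i,j}E|\De\ol{\vep}_{2i}^j|^{2\ell}\le Ck^{-\ell}$ via the Doukhan--Louhichi inequality (using only the univariate $\rho$-mixing in Assumption~\eqref{asm:eps_general}), and then takes a union bound over the $mp$ indices --- this is where $\alpha\ge(3+\ell)/(2\ell+2)$ makes the series summable. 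Your estimate $E|\De\ol{\vep}_{2i}^j|^2=O(1/k)$ and hence $\frac{1}{p}E\sum_i|\De\ol{\pvep}_{2i}|^2=O(n^{1-2\alpha})$ matches the paper's first-moment computation, but for the almost-sure upgrade I would recommend following the paper's per-coordinate union-bound route rather than a multivariate Rosenthal inequality.
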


\begin{prop}\label{pthm3}
Under the assumptions of Theorem \ref{thm:B_n},
 $F^{\widetilde{\pSi}}$ converge almost surely, and the limit $\widetilde{F}$
is determined by $\breve{H}$ in that its Stieltjes transform $m_{\widetilde{F}}(z)$ satisfies the following equation
\begin{equation}\label{eqn:B_n}
m_{\widetilde{F}}(z) \ = \ \int_{\tau\in\mathbb{R}}\dfrac{1}{\tau\(1-y(1+zm_{\widetilde{F}}(z))\)-z} \ d\breve{H}(\tau), ~~~ {\rm for~ all}~ z\in\mathbb{C}^+.
\end{equation}
\end{prop}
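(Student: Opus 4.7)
The plan is to adapt the approach of \cite{ZL11} for the time-variation adjusted RCV. The key point is that the self-normalization $\De\ol{\Y}_{2i}/|\De\ol{\Y}_{2i}|$ inside $\widetilde\pSi$ cancels both the scalar volatility factor $w_i$ and the overall magnitude of each summand, so the limiting spectral distribution depends only on the shape $\breve\pSi={\pLa\pLa^T}$, and the standard Mar\v{c}enko--Pastur equation must emerge.

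First, following Claim~1 in the proof of Theorem~\ref{thm:main}, one may assume without loss of generality that $(\pmu_t)\equiv 0$ and that $(\ga_t)$ is independent of $(\W_t)$: the drift is dispatched by a rank/Lipschitz argument, and the leverage effect is removed by a localization using Assumption \eqref{asm:leverage_2}, since $|\mathcal{I}_p|=O(p^{\de_1})$ with $\de_1<1/2$ turns the dependence into a vanishing-rank perturbation. Under this reduction, the representation \eqref{rx} (inherited from the class $\mathcal{C}$ structure) gives $\De\ol{\X}_{2i}\stackrel{d}{=}\sqrt{w_i}\,\breve\pSi^{1/2}\Z_i$ conditional on $(\ga_t)$, with $\Z_i$ i.i.d.\ standard normal and $w_i$ as in \eqref{wie}. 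Assumption \eqref{asm:gamma_bdd} then gives $w_i=\Theta(k/n)$ uniformly in $i$.

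Second, I would argue that the microstructure noise is asymptotically negligible. Assumption \eqref{asm:eps_general} together with the triangular-kernel form \eqref{DelV} gives $\E|\De\ol{\pvep}_{2i}|^2=\Theta(p/k)$, and the $\rho$-mixing supplies fourth-moment-type concentration strong enough to upgrade this to an a.s.\ bound uniform in $i$; the lower bound $\al\geq(3+\ell)/(2\ell+2)$ in Assumption \eqref{asm:ym_conv} is precisely what makes those estimates summable over $i=1,\ldots,m$ for a Borel--Cantelli argument. Combined with $|\De\ol{\X}_{2i}|^2=\Theta_p(kp/n)$ (from Assumptions \eqref{asm:vol_bdd} and \eqref{asm:Sigma_bdd_2}), the per-summand signal-to-noise ratio is $\Theta_p(n^{2\al-1})\to\infty$ since $\al>1/2$. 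Expanding
\[
\frac{\De\ol{\Y}_{2i}(\De\ol{\Y}_{2i})^T}{|\De\ol{\Y}_{2i}|^2}
=\frac{\De\ol{\X}_{2i}(\De\ol{\X}_{2i})^T}{|\De\ol{\X}_{2i}|^2}+E_i,
\]
I would show $\|E_i\|=o_p(1)$ uniformly in $i$ and apply rank-one ESD perturbation inequalities (of the Bai--Silverstein A.43 type) to conclude that $F^{\widetilde\pSi}$ has the same a.s.\ limit as $F^{\widetilde\pSi^*}$, where
\[
\widetilde\pSi^*:=\frac{p}{m}\sum_{i=1}^m\frac{\breve\pSi^{1/2}\Z_i\Z_i^T\breve\pSi^{1/2}}{\Z_i^T\breve\pSi\,\Z_i}.
\]
Crucially, $w_i$ has been fully eliminated from $\widetilde\pSi^*$ by the self-normalization.

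Finally, I would invoke the time-variation adjusted RCV result of \cite{ZL11}: the summands of $\widetilde\pSi^*$ are i.i.d., and this is exactly the self-normalized sample matrix treated there, whose LSD is determined by $\breve H$ via the standard Mar\v{c}enko--Pastur equation \eqref{eqn:B_n}, with Assumption \eqref{asm:Sigma_conv} supplying $F^{\breve\pSi}\to\breve H$. The main obstacle will be Step~2: establishing the uniform-in-$i$ a.s.\ control on $\|E_i\|$ in the presence of both cross-sectionally arbitrary and temporally $\rho$-mixing noise is the delicate part. Concretely, one must prove $\rho$-mixing analogues of Lemmas \ref{rv4}--\ref{ww}, in particular a summable fourth-moment bound on $|\De\ol{\pvep}_{2i}|^2-\E|\De\ol{\pvep}_{2i}|^2$ and on quadratic forms $\E_i^T M F_i$ with $\E_i,F_i$ pre-averaged noise blocks, for which I would use moment inequalities for $\rho$-mixing sequences (e.g., Shao-type bounds); the precise calibration of $\al$ in \eqref{asm:ym_conv} is exactly what this calculation forces.
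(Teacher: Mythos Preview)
Your overall architecture matches the paper's: reduce to $\pmu\equiv 0$ and $(\ga_t)\perp(\W_t)$, show the noise is negligible so that $\widetilde\pSi$ and the noiseless self-normalized matrix share an LSD, then invoke the time-variation adjusted argument of \cite{ZL11} to land on the standard Mar\v{c}enko--Pastur equation. Where you diverge from the paper is in the mechanism of Step~2, and there the proposal has a real gap.

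The paper does \emph{not} control $|\De\ol\pvep_{2i}|^2$ as a whole via fourth moments. Because Assumption \eqref{asm:eps_general} imposes \emph{no} cross-sectional structure on the noise, moment bounds on the $p$-sum $|\De\ol\pvep_{2i}|^2$ buy you nothing beyond what you get entrywise, and fourth moments are too weak: a fourth-moment union bound over $i$ yields summability only for $\al>4/5$, not for the full range $\al\ge (3+\ell)/(2\ell+2)$ (which approaches $1/2$ as $\ell\to\infty$). Instead, the paper proves the entrywise statement
\[
\max_{1\le i\le m,\;1\le j\le p}\sqrt{p}\,|\De\ol\eps_{2i}^{\,j}|\to 0\q\mbox{a.s.},
\]
via the $2\ell$-th moment bound $E|\De\ol\eps_{2i}^{\,j}|^{2\ell}\le Ck^{-\ell}$, obtained from the Doukhan--Louhichi moment inequality for weakly dependent sequences (this is where the $\rho$-mixing rate $O(r^{-\ell})$ enters, through the covariance coefficients $C_{r,2\ell}$). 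A union bound over $mp$ indices then gives a tail of order $n^{\ell+2-(2\ell+2)\al}$, and summability forces exactly $\al>(\ell+3)/(2\ell+2)$. Your reference to Lemmas \ref{rv4}--\ref{ww} is misplaced: those are quadratic-form bounds for i.i.d.\ noise used in the proof of Proposition \ref{prop:LSD_signal_noise}, and play no role here.

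A second, smaller point: the perturbation step is not done via ``rank-one ESD inequalities'' in the paper. Once the entrywise bound $\sqrt{p}\,|\De\ol\eps_{2i}^{\,j}|=o(1)$ is in hand, the paper applies Lemma \ref{lemma1} (Lemma 1 of \cite{ZL11}), which is tailored to additive vector perturbations with entries uniformly $o(1/\sqrt{p})$; this simultaneously handles the denominator replacement $|\De\ol\Y_{2i}|^2\to|\De\ol\X_{2i}|^2$ and the numerator replacement $\De\ol\Y_{2i}\to\De\ol\X_{2i}$. Your trace-norm route via $\|E_i\|_1\le 2\|E_i\|$ can be made to work, but only after you have the same uniform control on $|\De\ol\pvep_{2i}|/|\De\ol\X_{2i}|$, which again reduces to the entrywise estimate above.
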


We will prove Proposition \ref{pthm3} first, and then give the proof of Lemma~\ref{pthm3_a} afterwards.

\begin{proof}[Proof of Proposition \ref{pthm3}]
We now show the convergence of $F^{\tSi}$.
The main reason that we choose~$k$ in such a way that $k/\sqrt{n}\to\infty$ is to make the noise term negligible. To be more specific, by choosing $k=\lfloor\th n^{\alpha}\rfloor$ for
some $\alpha\in [(3+\ell)/(2\ell+2) ,1)$ where $\ell$ is the integer as in Assumption \eqref{asm:eps_general},
 we shall show that
\[
\tpSi=y_m\sum_{i=1}^m\dfrac{\De\ol{\Y}_{2i}(\De\ol{\Y}_{2i})^T}{|\De\ol{\Y}_{2i}|^2}
~~~{\rm and }~~~
\widetilde{\tpSi}:=y_m\sum_{i=1}^m\dfrac{\De\ol{\X}_{2i}(\De\ol{\X}_{2i})^T}{|\De\ol{\X}_{2i}|^2}
\]
have the same LSD. This will follow if we can show that
\begin{eqnarray}
\max_{i=1,\ldots,m} \left|\dfrac{|\De\ol{\Y}_{2i}|^2}{|\De\ol{\X}_{2i}|^2}-1\right| \to 0 ~~~~~ {\rm almost ~ surely},
\label{prop1}
\end{eqnarray}
and
\begin{eqnarray}
y_m\sum_{i=1}^m\dfrac{\De\ol{\Y}_{2i}(\De\ol{\Y}_{2i})^T}{|\De\ol{\X}_{2i}|^2}~~{\rm and ~~} \widetilde{\tpSi}~~~{\rm have~ the ~ same ~ LSD}.
\label{prop2}
\end{eqnarray}

We start with \eqref{prop1}. Since
\begin{eqnarray*}
\left|\dfrac{|\De\ol{\Y}_{2i}|^2}{|\De\ol{\X}_{2i}|^2}-1\right|
&=&\left|\dfrac{|\De\ol{\X}_{2i}|^2+|\De\ol{\pvep}_{2i}|^2+2(\De\ol{\X}_{2i})^T(\De\ol{\pvep}_{2i})}{|\De\ol{\X}_{2i}|^2}-1\right|\\
&\le&\(\dfrac{|\De\ol{\pvep}_{2i}|}{|\De\ol{\X}_{2i}|}\)^2+2\dfrac{|\De\ol{\pvep}_{2i}|}{|\De\ol{\X}_{2i}|},
\end{eqnarray*}
in order to prove \eqref{prop1}, it  suffices to show
\[
\max_{1\le i\le m} ~
\dfrac{|\De\ol{\pvep}_{2i}|}{|\De\ol{\X}_{2i}|} \to 0 ~~~~~~~~ {\rm almost ~surely}.
\]
Below we shall prove the following slightly stronger result:
\begin{eqnarray}\label{ep_x}
\max_{1\le i\le m,1\le j\le p} ~
\dfrac{\sqrt{p}~|\De\ol{\varepsilon}_{2i}^{j}|}{|\De\ol{\X}_{2i}|}~ \to~ 0 ~~~~~~ {\rm almost ~surely},
\end{eqnarray}
where for any vector ${\bf a}$,  $a^{j}$ denotes its $j$th entry.

Notice further that for \eqref{prop2},  using  Lemma \ref{lemma1} in  Appendix \ref{appendix:lemmas},  to prove \eqref{prop2}, it also suffices to show \eqref{ep_x}.

We now prove \eqref{ep_x}. We start with the denominator term $\De\ol{\X}_{2i}$.
We have $\De\ol{\X}_{2i}=\wt{\V}_i+\wt{\Z}_i$ for $\wt{\V}_i$ and $\wt{\Z}_i$ defined in \eqref{Vi} and \eqref{Zi} respectively.
Write $\wt{\Z}_i$ as $
\sqrt{w_i}
~ \pLa \Z_i$, where $w_i$ is defined in \eqref{wie} and
\begin{eqnarray*}
\Z_i=
\sum_{|j|<k}  \frac{1}{\sqrt{w_i}} \(1-\dfrac{|j|}{k}\)\int_{((2i-1)k+j-1)/n}^{((2i-1)k+j)/n} \ga_t\, d\W_t.
\end{eqnarray*}

 By Assumption \eqref{asm:leverage_2}, for all $j\notin \mathcal{I}_p$, $Z_i^{j}$ are \hbox{i.i.d.} $N(0,1)$. By using the same trick as the proof of (3.34) in \cite{ZL11}, we have
\begin{equation}\label{eq:mv_normal_norm}
\max_{1\le i\le m} ~
\left|\dfrac{1}{p}|\pLa \Z_i|^2 - 1\right| ~\to ~ 0 ~~~~~~{\rm almost~surely}.
\end{equation}
Note that
\begin{eqnarray*}
|\De\ol{\X}_{2i}|^2
~=~|\wt{\V}_i+\wt{\Z}_i|^2
~\geq~|\wt{\V}_i|^2+|\wt{\Z}_i|^2-2|\wt{\V}_i||\wt{\Z}_i|.
\end{eqnarray*}
Assumption \eqref{asm:gamma_bdd} implies that for  all $i$, there exist $\wt{C}_1
$ such that
\[
|w_i| ~\geq ~ \wt{C}_1\dfrac{k}{n}.
\]
Therefore by Assumption \eqref{asm:ym_conv}, there exists $C>0$ such that
\[
|\wt{\Z}_i|^2~=~
{|w_i|}
~|\pLa\Z_i|^2
~\geq~\dfrac{C}{p}|\pLa\Z_i|^2,
\]
which, together with \eqref{eq:mv_normal_norm}, implies that there exists $\delta_1>0$ such that for all $n$ large enough,
\[
\min_{1\le i\le m} ~ |\wt{\Z}_i|^2 \geq \delta_1.
\]
Moreover, by Assumption \eqref{asm:mu_bdd},
$|\wt{V}_i^{j}|\le C k/n$ for all $i,j$,  hence $\max_i |\wt{\V}_i|=O(\sqrt{p}\times k/n)$, which, by Assumption \eqref{asm:ym_conv}, is $O(\sqrt{1/m})=o(1)$.   Therefore, there exists a constant $\delta>0$ such that, almost surely, for all~$n$ large enough,
\begin{equation}\label{eq:delta_X_bdd}
\min_{1\le i\le m} ~ |\De\ol{\X}_{2i}|^2 ~\geq~ \delta.
\end{equation}

%
%

It remains to prove that
\begin{eqnarray}\label{lim_eps}
\max_{1\le i\le m,1\le j\le p} ~
\sqrt{p}~ | \De\ol{\varepsilon}_{2i}^{j}|~\to~ 0,  ~~~ {\rm almost ~ surely.}
\end{eqnarray}
Observe that if we can show  there exists $C>0$ such that
\begin{eqnarray}\label{dev_lim_eps}
\max_{1\le i\le m,1\le j\le p} E|\De\ol{\vep}_{2i}^j|^{2\ell} \ \le \ Ck^{-\ell},
\end{eqnarray}
where $\ell $ is the integer satisfying $\ell > (3-2\al)/(2\al-1)$ as in  Assumption \eqref{asm:ym_conv}, then for any $\varepsilon>0$, by Markov's inequality, we have
\begin{eqnarray*}
P\(\max_{1\le i\le m,1\le j\le p} \sqrt{p}~|\De\ol{\varepsilon}_{2i}^{j}|\geq\varepsilon\)
&\le& \sum_{1\le i\le m,1\le j\le p} \dfrac{p^\ell~ E|\De\ol{\varepsilon}_{2i}^{j}|^{2\ell}}{\varepsilon^{2\ell}}\\
&\le& \frac{C mp\cdot p^{\ell}}{k^\ell\varepsilon^{2\ell}} =
O\(\frac{1}{n^{(2+2\ell)\alpha - 2 - \ell}}\),
\end{eqnarray*}
where the last equation is due to Assumption \eqref{asm:ym_conv}. Since $\ell > (3-2\al)/(2\al-1)$, we have
$(2+2\ell)\alpha - 2 - \ell > 1$, hence by the Borel-Cantelli Lemma, \eqref{lim_eps} holds.

We now show \eqref{dev_lim_eps}, which is a Marcinkiewicz-Zygmund type inequality.
We will use Theorem 1 in \cite{DL99} to prove \eqref{dev_lim_eps}.
For that we need to verify $C_{r,2\ell}=O(r^{-\ell})$, where
\[
C_{r,2\ell}:= \max_{j=1,\ldots,p}\ \max_{1<M<2\ell} \sup_{(i_1,\cdots,i_{2\ell})\in\Theta_{r,M,2\ell}} \left| \cov( \vep_{i_1}^j \cdots \vep_{i_M}^j, \ \vep_{i_{M+1}}^j  \cdots \vep_{i_{2\ell}}^j ) \right|,
\]
where
$\Theta_{r,M,2\ell}=\{(i_1,\cdots,i_{2\ell}): i_1\le\ldots \le i_M < i_M+r \le i_{M+1}\le \ldots \le i_{2\ell}\}.$

We now verify that $C_{r,2\ell}=O(r^{-\ell})$.
For any $j$ and for any $(i_1,\cdots,i_{2\ell})\in\Theta_{r,M,2\ell}$, using the definition of $\rho$-mixing coefficients we have
\begin{eqnarray*}
&&\left| \cov( \vep_{i_1}^j  \cdots  \vep_{i_M}^j,\  \vep_{i_{M+1}}^j  \cdots \vep_{i_{2\ell}}^j ) \right| \\
&\le & \rho^j(r) \cdot \sqrt{ \var(\vep_{i_1}^j  \cdots  \vep_{i_M}^j) \cdot \var(\vep_{i_{M+1}}^j  \cdots \vep_{i_{2\ell}}^j) } \\
&\le & \rho^j(r) \cdot \sqrt{E((\vep_{i_1}^j)^2 \cdots (\vep_{i_M}^j)^2) \cdot E((\vep_{i_{M+1}}^j)^2\cdots (\vep_{i_{2\ell}}^j)^2 )}.
\end{eqnarray*}
By H\"{o}lder's inequality, we have
\[
E((\vep_{i_1}^j)^2 \cdots (\vep_{i_M}^j)^2) \ \le \ \(E(\vep_{i_1}^j)^{2M} \)^{1/M} \cdots \(E(\vep_{i_M}^j)^{2M} \)^{1/M}
\]
and similarly for $E((\vep_{i_{M+1}}^j)^2 \cdots (\vep_{i_{2\ell}}^j)^2 )$.
Since $(\vep_i^j)$'s have bounded $4\ell$th moments according to Assumption \eqref{asm:eps_general} and $\max_{j=1,\cdots,p} r^\ell  \rho^j(r)=O(1)$, we get $C_{r,2\ell}=O(r^{-\ell})$.

Finally, by using a similar argument as in the last part of the proof of Proposition 8 in \cite{ZL11} (see pp.3142--3143), we have that $\widetilde{\tpSi}$
has the same LSD as
\[
\widetilde{\S}:= \ \dfrac{1}{m}\sum_{i=1}^m \pLa \Z_i \Z_i^T \pLa^T,
\]
where $\Z_i$ consists of independent standard normals.
It is well known that the LSD of $\widetilde{\S}$ is determined by \eqref{eqn:B_n}, hence by the previous arguments, so does that of $\widetilde{\pSi}$.
\end{proof}

Now we prove Lemma \ref{pthm3_a}.
\begin{proof}[Proof of Lemma \ref{pthm3_a}]
We have 
\[
 \sum_{i=1}^m |\De\ol{\Y}_{2i}|^2
=\sum_{i=1}^m |\De\ol{\X}_{2i}|^2+ 2 \sum_{i=1}^m \De\ol{\X}_{2i}^T  \De\ol{\pvep}_{2i} + \sum_{i=1}^m|\De\ol{\pvep}_{2i}|^2.
\]
The convergence \eqref{lim_eps} and that $p/m\to y$ imply that $\sum_{i=1}^m|\De\ol{\pvep}_{2i}|^2/p\to 0$ almost surely.
To prove the lemma, it then suffices to show that 
\begin{equation}\label{subpav_conv}
  3\frac{\sum_{i=1}^m |\De\ol{\X}_{2i}|^2}{p}\to \zeta\q\mbox{almost surely},
\end{equation}
and 
\begin{equation}\label{subpav_error_negligible}
  \frac{\sum_{i=1}^m \De\ol{\X}_{2i}^T  \De\ol{\pvep}_{2i}}{p}\to 0\q\mbox{almost surely}.
\end{equation}

We start with \eqref{subpav_conv}. Write $\De\ol{\X}_{2i}=\wt{\V}_i+\wt{\Z}_i$  as in the proof of Proposition~\ref{pthm3}. The convergence \eqref{eq:mv_normal_norm} implies that
\[
\frac{\sum_{i=1}^m |\wt{\Z}_i|^2}{p} = \sum_{i=1}^m w_i + \mbox{error}, 
\]
where the error term converges to 0 almost surely. By Riemann integration and Assumption \eqref{asm:gamma_conv} it is easy to show that  $\sum_{i=1}^m w_i\to \zeta/3$, so we get
\[
3\frac{\sum_{i=1}^m |\wt{\Z}_i|^2}{p} \to \zeta \q\mbox{almost surely}.
\]
Furthermore by using the bound that $\max_i |\wt{\V}_i|=O(\sqrt{p}\times k/n)$ one can easily show that 
\[
\frac{2\sum_{i=1}^m |\wt{\V}_i||\wt{\Z}_i| +  \sum_{i=1}^m |\wt{\V}_i|^2}{p} \to   0\q\mbox{almost surely}.
\]
We therefore get \eqref{subpav_conv}.

Finally, \eqref{subpav_error_negligible} follows from \eqref{lim_eps} and \eqref{subpav_conv}.
\end{proof}


\renewcommand{\baselinestretch}{1.2}
\section{Two lemmas}\label{appendix:lemmas}
\setcounter{equation}{0}
\renewcommand{\theequation}{\thesection.\arabic{equation}}

\begin{lem}(Lemma 2.7 in \cite{BS98} ).
Let ${\bf X}=(X_1,\ldots,X_n)^T$ be a vector where the $X_i$'s are centered \hbox{i.i.d.} random variables with unit variance. Let ${\bf A}$ be an $n\times n$ deterministic complex matrix. Then, for any $p\geq 2$,
\[
E\left|{\bf X}^T{\bf A}{\bf X}-\tr{\bf A}\right|^p\le C_p
\left(
\left(E|X_1|^4\tr{\bf A}{\bf A}^*\right)^{p/2}+E|X_1|^{2p}\tr({\bf A}{\bf A}^*)^{p/2}
\right).
\]
\label{xtrx}
\end{lem}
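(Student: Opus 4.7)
The plan is to establish this quadratic-form concentration inequality via the standard martingale decomposition argument combined with the Burkholder inequality for martingale differences. First, I would introduce the filtration $\mathcal{F}_k = \sigma(X_1,\ldots,X_k)$ (with $\mathcal{F}_0$ trivial) and write
\[
\xi := \X^T \A \X - \tr\A = \sum_{k=1}^n Y_k, \q Y_k := (E_k - E_{k-1})[\X^T\A\X],
\]
where $E_k$ denotes conditional expectation given $\mathcal{F}_k$. A direct computation using $E X_i = 0$ and $E[X_i X_j] = \delta_{ij}$ yields
\[
Y_k = A_{kk}(X_k^2 - 1) + X_k \sum_{j<k}(A_{kj} + A_{jk}) X_j,
\]
so $\{Y_k\}$ is a martingale difference sequence with respect to $\{\mathcal{F}_k\}$.

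Next I would apply Burkholder's inequality, valid for any $p \geq 2$:
\[
E|\xi|^p \;\leq\; C_p\, E\Big(\sum_{k=1}^n E_{k-1}|Y_k|^2\Big)^{p/2} + C_p \sum_{k=1}^n E|Y_k|^p.
\]
For the quadratic-variation term, compute $E_{k-1}|Y_k|^2$ using the independence of $X_k$ from $\mathcal{F}_{k-1}$ and $E|X_k|^2 = 1$; the main contributions are $|A_{kk}|^2 \var(X_k^2)$ and $\bigl|\sum_{j<k}(A_{kj}+A_{jk})X_j\bigr|^2$ (the cross term involves $E X_k^3$ and is absorbed by Cauchy--Schwarz). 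Summing over $k$ and taking expectations collapses the second piece, via $E|X_j|^2 = 1$, to $\sum_{k}\sum_{j<k}|A_{kj}+A_{jk}|^2 \leq 4\tr(\A\A^*)$, while the first is bounded by $E|X_1|^4 \sum_k |A_{kk}|^2 \leq E|X_1|^4 \tr(\A\A^*)$. Jensen's inequality on $x\mapsto x^{p/2}$ then produces the $(E|X_1|^4 \tr\A\A^*)^{p/2}$ contribution.

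For $\sum_k E|Y_k|^p$, I would split $|Y_k|^p \leq 2^{p-1}\bigl(|A_{kk}|^p |X_k^2-1|^p + |X_k|^p \bigl|\sum_{j<k}(A_{kj}+A_{jk})X_j\bigr|^p\bigr)$. The diagonal part contributes $\sum_k |A_{kk}|^p\cdot E|X_1^2-1|^p$, bounded through $|A_{kk}|^p \leq \bigl((\A\A^*)^{p/2}\bigr)_{kk}$ summed to $\tr\bigl((\A\A^*)^{p/2}\bigr)$ times $E|X_1|^{2p}$. For the off-diagonal part I would apply Rosenthal's inequality to the independent centered sum $\sum_{j<k}(A_{kj}+A_{jk})X_j$, producing $C_p\bigl[\bigl(\sum_{j<k}|A_{kj}+A_{jk}|^2\bigr)^{p/2} + E|X_1|^p \sum_{j<k}|A_{kj}+A_{jk}|^p\bigr]$, and then sum over $k$. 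The main obstacle is the final bookkeeping: collapsing $\sum_k\bigl(\sum_{j<k}|A_{kj}+A_{jk}|^2\bigr)^{p/2}$ and $\sum_k\sum_{j<k}|A_{kj}+A_{jk}|^p$ into the Schatten-type trace $\tr\bigl((\A\A^*)^{p/2}\bigr)$. This is handled by the general inequality $\sum_k M_{kk}^{p/2} \leq \tr(M^{p/2})$ for Hermitian nonnegative $M$ and $p\geq 2$, applied to $M = \A\A^*$ together with the elementary fact that the $k$-th row-sum of squares of $\A+\A^T$ equals $((\A+\A^T)(\A+\A^T)^*)_{kk}$, which is controlled by the diagonal of a constant multiple of $\A\A^*$. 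Combining the two Burkholder contributions gives exactly the stated inequality.
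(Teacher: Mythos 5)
The paper does not prove this lemma; it is quoted verbatim as Lemma~2.7 of \cite{BS98}, so the only thing to assess is whether your reconstruction actually goes through. Your overall framework is the right one: the telescoping martingale decomposition $Y_k=(E_k-E_{k-1})[\X^T\A\X]$, the explicit formula $Y_k=A_{kk}(X_k^2-1)+X_k\sum_{j<k}(A_{kj}+A_{jk})X_j$, the Burkholder--Rosenthal inequality, and the Rosenthal/Schatten-trace bookkeeping for $\sum_k E|Y_k|^p$ via $|A_{kk}|^p\le\big((\A\A^*)^{p/2}\big)_{kk}$ and $\sum_{j<k}|A_{kj}+A_{jk}|^2\le 2\big[(\A\A^*)_{kk}+(\A^*\A)_{kk}\big]$ are all correct and are essentially what Bai and Silverstein do.

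The gap is in the predictable quadratic-variation term. You compute $E\big[\sum_k E_{k-1}|Y_k|^2\big]\le C\,E|X_1|^4\,\tr(\A\A^*)$ and then invoke ``Jensen's inequality on $x\mapsto x^{p/2}$'' to conclude $E\big(\sum_k E_{k-1}|Y_k|^2\big)^{p/2}\le\big(C\,E|X_1|^4\,\tr\A\A^*\big)^{p/2}$. But Jensen for the convex map $x\mapsto x^{p/2}$ ($p\ge 2$) gives exactly the opposite inequality, $\big(E Z\big)^{p/2}\le E\,Z^{p/2}$, so this step is invalid: you cannot pass the expectation inside the power. The quantity $\sum_k E_{k-1}|Y_k|^2$ is a genuine random variable --- after removing the deterministic piece $\var(X_1^2)\sum_k|A_{kk}|^2$, it is itself (up to a cross term controlled by Cauchy--Schwarz) a nonnegative quadratic form $\X^T B\X$ with $B\succeq 0$ and $\tr B\le 4\tr(\A\A^*)$. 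The correct treatment, and the one used by Bai and Silverstein, is to bound $E\big(\X^T B\X\big)^{p/2}$ by applying the lemma itself (or Burkholder--Rosenthal again) recursively with exponent $p/2$ in place of $p$; because the exponent halves at each stage, the recursion bottoms out after $O(\log p)$ steps, and one checks $\tr(B^2)\le(\tr B)^2$ and $\tr(B^{p/2})\le(\tr B)^{p/2}$ to close the induction with the stated right-hand side. Without this recursive step your argument proves the lemma only for $p=2$, where the quadratic variation term coincides with $\sum_k E|Y_k|^2$ and no power is needed.
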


\begin{lem}(Lemma 1 in \cite{ZL11}).
Suppose that for each $p$, ${\bf v}_l=(v_l^{1},\ldots,v_l^{p})^T$ and ${\bf w}_l=(w_l^{1},\ldots,w_l^{p})^T$, $l=1,\ldots,m$, are all $p$-dimensional vectors. Define
\[
\wt{{\bf S}}_m=\sum_{l=1}^m({\bf v}_l+{\bf w}_l)({\bf v}_l+{\bf w}_l)^T
~~ and ~~
{\bf S}_m=\sum_{l=1}^m{\bf w}_l({\bf w}_l)^T.
\]
If the following conditions are satisfied:
\begin{itemize}
  \item $m=m(p)$ with $\lim_{p\to\infty} p/m=y>0$;
  \item there exists a sequence $\varepsilon_p=o(1/\sqrt{p})$ such that for all $p$ and all $l$, all the entries of ${\bf v}_l$ are bounded by $\varepsilon_p$ in absolute value;
  \item $\limsup_{p\to\infty} \tr({\bf S}_m)/p<\infty$ almost surely.
\end{itemize}
Then $L(F^{\wt{S}_m},F^{S_m})\to 0$ almost surely, where for any two probability distribution functions $F$ and $G$, $L(F,G)$ denotes the Levy distance between them.
\label{lemma1}
\end{lem}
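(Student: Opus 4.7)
The plan is to reduce the claim to a single matrix perturbation bound. Collect the vectors into $p \times m$ matrices $\V = [{\bf v}_1, \ldots, {\bf v}_m]$, $\W = [{\bf w}_1, \ldots, {\bf w}_m]$ and $\U = \V + \W$, so that $\wt{{\bf S}}_m = \U\U^T$ and ${\bf S}_m = \W\W^T$. A standard Gram-matrix perturbation inequality (for instance, Lemma A.45 in Bai and Silverstein, 2010) yields
\[
L^4\bigl(F^{\U\U^T}, F^{\W\W^T}\bigr) \;\le\; \frac{2}{p^2}\,\|\U - \W\|_F^2\,\bigl(\|\U\|_F^2 + \|\W\|_F^2\bigr),
\]
so it suffices to show that the right-hand side tends to zero almost surely.

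For the perturbation factor, the entrywise bound $|v_l^j| \le \varepsilon_p = o(1/\sqrt p)$ immediately gives
\[
\|\U - \W\|_F^2 \;=\; \sum_{l=1}^m |{\bf v}_l|^2 \;\le\; m\, p\, \varepsilon_p^2 \;=\; o(m) \;=\; o(p),
\]
where the last step uses $p/m \to y > 0$. For the base term, the hypothesis $\limsup_p \tr({\bf S}_m)/p < \infty$ gives $\|\W\|_F^2 = O(p)$ almost surely. I would then expand
\[
\|\U\|_F^2 \;=\; \|\W\|_F^2 + \|\U - \W\|_F^2 + 2\sum_{l=1}^m {\bf v}_l^T {\bf w}_l
\]
and bound the cross term by Cauchy--Schwarz, $\bigl|\sum_l {\bf v}_l^T {\bf w}_l\bigr| \le \|\U-\W\|_F \|\W\|_F = \sqrt{o(p)}\cdot O(\sqrt p) = o(p)$ a.s., so that $\|\U\|_F^2 = O(p)$ almost surely as well. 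Combining the pieces gives $L^4(F^{\wt{{\bf S}}_m}, F^{{\bf S}_m}) \le 2p^{-2}\cdot o(p)\cdot O(p) = o(1)$ a.s., which is the claimed convergence.

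The main obstacle is really just locating the right matrix perturbation inequality; once a bound of the form $L^q(F^{AA^T}, F^{BB^T}) \lesssim p^{-2}\|A-B\|_F^2(\|A\|_F^2 + \|B\|_F^2)$ is in hand, the rest is a short Frobenius-norm bookkeeping exercise, with the entrywise smallness of the ${\bf v}_l$'s handling the perturbation and the stated trace hypothesis handling the base level. One might alternatively try a rank-inequality approach using $\mathrm{rank}(\wt{\bf S}_m - {\bf S}_m) \le 2m$, but since $m$ and $p$ are of the same order this would only give an $O(1)$ bound on the Kolmogorov distance, so the Frobenius-norm route seems unavoidable.
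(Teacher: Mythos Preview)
Your argument is correct. Note, however, that the paper does not actually supply its own proof of this lemma: it is quoted verbatim as Lemma~1 of \cite{ZL11} and used as a black box. Your proof is the standard one and is essentially what appears in \cite{ZL11}: apply the Gram-matrix Levy-distance inequality (Corollary~A.42 in Bai--Silverstein), then feed in the Frobenius-norm bounds $\|\V\|_F^2 = o(p)$ from the entrywise hypothesis and $\|\W\|_F^2 = O(p)$ from the trace hypothesis.
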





\end{document}